\definecolor{ffqqqq}{rgb}{1,0,0}
\definecolor{qqffqq}{rgb}{0,1,0}
\definecolor{ffffff}{rgb}{1,1,1}
\colorlet{ColorGray}{gray!30}
\newtheorem{mainthm}{Theorem}
\newtheorem{thm}{Theorem}
\newtheorem{claim}[thm]{Claim}
\newtheorem{cor}[thm]{Corollary}
\newtheorem{lem}[thm]{Lemma}
\newtheorem{prop}[thm]{Proposition}
\newtheorem{conj}[thm]{Conjecture}
\theoremstyle{definition}
\newtheorem{defn}[thm]{Definition}
\newtheorem{rem}[thm]{Remark}
\newtheorem{obs}[thm]{Observation}
\newtheorem{ass}[thm]{Assumption}
\numberwithin{thm}{section}
\newcommand{\cA}{\ensuremath{\mathcal A}}
\newcommand{\cC}{\ensuremath{\mathcal C}}
\newcommand{\cD}{\ensuremath{\mathcal D}}
\newcommand{\cE}{\ensuremath{\mathcal E}}
\newcommand{\cF}{\ensuremath{\mathcal F}}
\newcommand{\cG}{\ensuremath{\mathcal G}}
\newcommand{\cH}{\ensuremath{\mathcal H}}
\newcommand{\cP}{\ensuremath{\mathcal P}}
\newcommand{\cS}{\ensuremath{\mathcal S}}
\newcommand{\cU}{\ensuremath{\mathcal U}}
\newcommand{\bbE}{{\ensuremath{\mathbb E}} }
\newcommand{\bbH}{{\ensuremath{\mathbb H}} }
\newcommand{\bbN}{{\ensuremath{\mathbb N}} }
\newcommand{\bbP}{{\ensuremath{\mathbb P}} }
\newcommand{\bbQ}{{\ensuremath{\mathbb Q}} }
\newcommand{\bbR}{{\ensuremath{\mathbb R}} }
\newcommand{\bbX}{{\ensuremath{\mathbb X}} }
\newcommand{\bbZ}{{\ensuremath{\mathbb Z}} }
\let\a=\alpha    \let\d=\delta  
 \let\g=\gamma     \let\k=\kappa  \let\l=\lambda
\let\m=\mu   \let\n=\nu   \let\o=\omega      
\let\r=\rho  \let\s=\sigma \let\t=\tau   
 \let\x=\xi 
\let\D=\Delta   \let\G=\Gamma  \let\L=\Lambda 
\let\O=\Omega
\newcommand{\tc}{\thinspace |\thinspace}
\newcommand{\var}{\operatorname{Var}}
\newcommand{\<}{\langle}
\renewcommand{\>}{\rangle}
\newcommand{\1}{{\ensuremath{\mathbbm{1}}} }
\newcommand{\Aint}{{\ensuremath{A^{\text{int}}}} }
\newcommand{\ur}{{\ensuremath{\underbar r}} }
\newcommand{\oB}{{\ensuremath{B^\circ} }}
\newcommand{\ds}{{\mathrm{d}s}\ }
\renewcommand{\geq}{\geqslant}
\renewcommand{\le}{\leqslant}
\renewcommand{\ge}{\geqslant}
\renewcommand{\to}{\rightarrow}
\newcommand{\hS}{\ensuremath{\widehat\cS}}
\newcommand{\SG}{\ensuremath{\mathcal {SG}}}
\newcommand{\HA}{\ensuremath{\mathcal {HA}}}
\newcommand{\SB}{\ensuremath{\mathcal {SB}}}
\newcommand{\ST}{\ensuremath{\mathcal {ST}}}
\tikzset{cross/.style={cross out, draw=black, minimum size=2*(#1-\pgflinewidth), inner sep=0pt, outer sep=0pt},
cross/.default={1pt}}
\newdimen\LineSpace
\tikzset{
    line space/.code={\LineSpace=#1},
    line space=10pt
}
\begin{document}

\begin{frontmatter}

\title{Universality for critical KCM:\\finite number of stable directions\thanksref{t1}}
\runtitle{Universality for critical KCM}
\thankstext{T1}{The authors were supported by ERC Starting Grant 680275 ``MALIG''. The second author was supported by PRIN 20155PAWZB ``Large Scale Random Structures''. The third author was supported by ANR-15-CE40-0020-01.}

\begin{aug}
\author[A]{\fnms{Ivailo} \snm{Hartarsky}\ead[label=e1,mark]{hartarsky@ceremade.dauphine.fr}},
\author[B]{\fnms{Fabio} \snm{Martinelli}\ead[label=e2,mark]{martin@mat.uniroma3.it}}
\and
\author[A]{\fnms{Cristina} \snm{Toninelli}\ead[label=e3,mark]{toninelli@ceremade.dauphine.fr}}
\address[A]{CEREMADE, CNRS, UMR 7534, Universit\'e Paris-Dauphine, PSL University, Place du Mar\'echal de Lattre de Tassigny, 75775 Paris Cedex 16, France, 
\printead{e1,e3}}

\address[B]{Dipartimento di Matematica e Fisica, Universit\`a Roma Tre, Largo S.L. Murialdo, 00146, Roma, Italy,
\printead{e2}}
\runauthor{I. Hartarsky, F. Martinelli, C. Toninelli}
\end{aug}

\begin{abstract}
In this paper we consider kinetically constrained models (KCM) on $\bbZ^2$ with general update families $\cU$. 
For $\cU$ belonging to the so-called ``critical class'' our focus is on the divergence of the infection time of the origin for the equilibrium process as the density of the facilitating sites vanishes. In a recent paper \cite{Hartarsky20} Mar\^ech\'e and two of the present authors proved that if $\cU$ has an infinite number of ``stable directions,'' then on a doubly logarithmic scale the above divergence is twice the one in the corresponding $\cU$-bootstrap percolation. 

Here we prove instead that, contrary to previous conjectures \cite{Martinelli19a}, in the complementary case the two divergences are the same. In particular, we establish  the full universality partition for critical $\cU$. The main novel contribution is the identification of the leading mechanism governing the motion of infected critical droplets. It consists of a peculiar hierarchical combination of mesoscopic East-like motions. 
\end{abstract}

\begin{keyword}[class=MSC2020]
\kwd[Primary ]{60K35}
\kwd[; secondary ]{82C22, 60J27, 60C05}
\end{keyword}

\begin{keyword}
\kwd{Kinetically constrained models}
\kwd{bootstrap percolation}
\kwd{universality}
\kwd{Glauber dynamics}
\kwd{Poincar\'e inequality}
\end{keyword}

\end{frontmatter}

\section{Introduction}
\subsection{Kinetically constrained models}
We directly define the models of
interest and refer the reader to the companion paper \cite{Hartarsky20} for more background. Let $\cU$ be a finite collection of finite subsets of
$\bbZ^2\setminus \{0\}$ called \emph{update rules} and consider the following 
interacting particle systems on $\O=\{0,1\}^{\bbZ^2}$ parametrised by
$\cU$ and $q\in (0,1)$. We say that a site $x\in\bbZ^2$ is \emph{infected} if $\o(x)=0$ and \emph{healthy} otherwise. The dynamics is as follows.  On each site $x\in\bbZ^2$ we are given an independent Poisson clock of parameter one and at each arrival time $t:=t_{x,k}$ of the clock the process attempts to update the current state $\o_x(t)$ according to the following rule. If the configuration $\omega(t)$ is such that there exists $ U\in\cU$ such that $\forall y\in U,\omega_{x+y}(t)=0,$ then $\omega_x(t)$ is resampled from the Bernoulli$(1-q)$-measure $\mu_q(1)=1-q,\mu_q(0)=q$. In this case we say that a~\emph{legal update} occurs at time $t$ at site $x$. Otherwise the attempted update is rejected.

Using the fact that all rings $(t_{x,k})_{x\in \bbZ^2,k\in \bbN}$ of the Poisson clocks are different a.s.\ and that all updating rules are finite sets, it is easy to check that the above process is well defined \cite{Liggett05}. Moreover, since the update rules do not contain the origin, the process is reversible w.r.t.\ the product measure $\mu:= \mu_q^{\bbZ^2}$.

We will refer to the above process as the \emph{kinetically constrained spin model} with update family $\cU$, for short
$\cU$-KCM or just KCM if $\cU$ is clear from the context. KCM have
been introduced several years ago in the physics literature
(but only for certain specific choices of $\cU$) in order to
reproduce in simple and fundamental interacting particle systems
some of the main features of the so-called \emph{glassy dynamics}, i.e.\ the
dynamics of a supercooled liquid near the \emph{glass
  transition} \cites{Fredrickson84,Jackle91,Ritort03,Garrahan11,Berthier11}.

\subsection{$\cU$-Bootstrap percolation}
The $\cU$-KCM  can also be seen as the non-monotone stochastic counterpart of the \emph{$\cU$-bootstrap percolation}, a  monotone cellular automaton on $\O$ (see e.g.\ \cites{Bollobas14,Morris17}). In the latter process one says that $x\in\bbZ^2$ is \emph{infected} for $\o\in \O$  if $\o_x=0$ and \emph{healthy}
otherwise and the relevant time evolution concerns the set of
infected sites $A_t$ at integer times $t$. Given $A_t,$ the set
$A_{t+1}$ is constructed by adding to $A_t$ any healthy site $x$ for
which there exists $U\in \cU$ such that $U+x\subset A_t$:
\[
A_{t+1}=A_t\cup\{x\in\bbZ^2,\exists U\in \cU,
x+U\subset A_t\}.
\]
The only
randomness in the process occurs at time $t=0$ by taking the
initial infection as the random set $A_\o=\{x\in \bbZ^2:\o_x=0\}$ with
$\o\sim \mu$. If $A_{t=0}=A$ then the \emph{closure} of $A$ under the $\cU$-bootstrap percolation is the set $[A]_\cU := \bigcup_{t \ge 0} A_t$.

For both processes the main focus has been on the
typical value (e.g.\ in mean, median, or w.h.p.) of the infection time of the origin defined as
\[
  \t_0=\inf\{t: \o_0(t)=0\}.
\]
Notice that for $\cU$-bootstrap percolation $\t_0\in \bbN\cup \{+\infty\}$ and
it only depends on the
initial set of infection $A_\o$. On the other hand, for the KCM $\t_0\in [0,+\infty]$ and it depends on the initial state $\o(0)$, on the occurrences of the Poisson clocks at the vertices of $\bbZ^2$ and on the resampling values of the legal updates.
In order to present our main result on $\t_0$ we need some further notation (see \cite{Bollobas15}).

Let $\|\cdot\|$ and $\<\cdot,\cdot\>$ denote the Euclidean norm and scalar product on $\bbR^2$ respectively. For each unit vector $u \in S^1=\{x\in\bbR^2,\|x\|=1\}\sim\bbR/2\pi\bbZ$, let 
$\bbH_u := \{x \in \bbR^2 : \langle x,u \rangle < 0 \}$
denote the half-plane whose boundary is perpendicular to $u$. 

\begin{defn}[Stable directions]
  \label{def:stable}
The set of \emph{stable directions} of $\cU$ is
\[\cS = \cS(\cU) = \left\{ u \in S^1: [\bbH_u\cap \bbZ^2]_\cU =
\bbH_u \cap \bbZ^2\right\}.\]
\end{defn}
Using the above definition the update family $\cU$ was classified in
\cite{Bollobas15} as:
\begin{itemize}
\item \emph{supercritical} if there exists an open semicircle in $S^1$ that is disjoint from $\cS$,
\item \emph{critical} if there exists a semicircle in $S^1$ that has finite intersection with $\cS$, and if every open semicircle in $S^1$ has non-empty intersection with $\cS$, \vspace{0.1cm}
\item \emph{subcritical} if every semicircle in $S^1$ has infinite intersection with $\cS$. 
\end{itemize}
It is known from~\cites{Bollobas15,Balister16} that for all $q\in (0,1)$ the $\cU$-bootstrap percolation $\tau_0$ is a.s.\ finite iff $\cU$ is supercritical or critical. The next definition quantifies the difficulty of propagation of infection in a stable direction for bootstrap percolation.
\begin{defn}[Definition~1.2 of~\cite{Bollobas14}]
\label{def:stable:alpha}
Let $\cU$ be an update family and $u\in S^1$ be a direction. The \emph{difficulty} of $u$, $\alpha(u)$, is defined as follows.
\begin{itemize}
    \item If $u$ is unstable, then $\alpha(u)=0$.
    \item If $u$ is an isolated (in the topological sense) stable direction, then
    \begin{equation*}
    \alpha(u)=\min\{n\in\bbN \colon \exists Z\subset\bbZ^2,|Z|=n,|[\bbZ^2\cap(\bbH_u\cup Z)]\setminus\bbH_u|=\infty\},
    \end{equation*}
    i.e.\ the minimal number of infections allowing $\bbH_u$ to grow infinitely.
    \item Otherwise, $\alpha(u)=\infty$.
\end{itemize}
The \emph{difficulty} of $\cU$ is 
\begin{equation}
\label{eq:def:alpha}
\alpha(\cU)=\inf_{C\in\cC}\sup_{u\in C}\alpha(u),
\end{equation}
where $\cC$ is the set of open semicircles of $S^1$. 
\end{defn}
\begin{rem}
\label{rem:finite difficulty}It was proved in \cite {Bollobas15}*{Lemma 5.2} (see also~\cite{Bollobas14}*{Lemma 2.8}) that $1\le \alpha(u)<\infty$ if and only if $u$ is an isolated stable direction. Moreover, it is easy to prove \cite{Bollobas14}*{Lemma 6.6} that if $\alpha(u)<+\infty$ then there exists a set $Z$ of cardinality $\alpha(u)$ such that $|[\bbZ^2\cap(\bbH_u\cup Z]\cap \ell(u)|=+\infty,$ where $\ell_u=\{x\in \bbR^2: \< x,u\>=0\}$. 
\end{rem}
It follows from the main result of \cite{Bollobas14} that for any critical $\cU$ with difficulty $\a$ the $\cU$-bootstrap percolation infection time $\t_0$ w.h.p.\ satisfies
\begin{equation}
  \label{eq:1.1}
  \lim_{q\to 0}\frac{\log\log (\t_0)}{\log(1/q)}=\a.
\end{equation}

\subsection{Main results}
Our main result is that \eqref{eq:1.1} holds also for the $\cU$-KCM if $\cS$ is finite. The core of the proof is based on the discovery of a new and efficient relaxation mechanism completely different from the one occurring in bootstrap percolation. While for the latter the dominant mechanism to grow infection is a linear expansion from some rare large groups of infected sites (critical droplets), for KCM we find that these droplets, in order to move around in an efficient way to infect the origin, perform a complex hierarchical motion (see Section \ref{sec:sketch} for an heuristic detailed description). The above motion is a novel type of relaxation mechanism with respect to all those considered so far in the KCM literature. In particular, it is different from the random walk like motion that captures the dominant behavior for 2-neighbour model \cite{Martinelli19}, and it is also different from the purely East-like motion used to establish the scaling for models with an infinite number of stable directions \cites{Martinelli19a,Mareche20Duarte,Hartarsky20}. Indeed, based on the wrong intuition that the two former mechanisms were essentially the only two possible efficient ways to move critical droplets around, a conjecture was put forward in \cite{Martinelli19a}*{Conjecture 3}, which is disproved by our result, Theorem \ref{th:main} below.

Write $\bbE_\mu(\t_0)$ for the expectation of the infection time for the $\cU$-KCM with initial law $\mu$ (i.e.\ for the stationary process).
\begin{mainthm}
\label{th:main}
Let $\cU$ be a critical update family with \emph{finite} stable set $\cS$ and difficulty $\a$. Then 
\begin{equation}
  \label{eq:1.2}
  \bbE_\mu(\tau_0)=e^{O(\log(1/q)^3/q^\alpha)}.
\end{equation}
Moreover, 
\begin{equation}
  \label{eq:1.3}
  \lim_{q\to 0}\frac{\log\log (\bbE_\mu(\t_0))}{\log(1/q)}=\a.
\end{equation}
\end{mainthm}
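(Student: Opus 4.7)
The lower bound $\liminf_{q\to 0}\log\log\bbE_\mu(\tau_0)/\log(1/q)\ge\alpha$ in \eqref{eq:1.3} is routine: by coupling the KCM with $\cU$-bootstrap percolation, if the origin is infected in the KCM at time $t$ then the set of sites whose Poisson clock has rung in $[0,t]$, added to the initial infection, already contains the origin in its $\cU$-closure; combining this with \eqref{eq:1.1} and a large-deviation estimate for the number of clock rings transfers the bootstrap lower bound to $\bbE_\mu(\tau_0)$. Moreover \eqref{eq:1.2} immediately implies the matching upper bound in \eqref{eq:1.3}, since $\log\log e^{O(\log(1/q)^3/q^\alpha)}=\alpha\log(1/q)+O(\log\log(1/q))$. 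Hence the real work is \eqref{eq:1.2}.

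My plan for \eqref{eq:1.2} is to bound the relaxation time $\trel$ via a Poincaré inequality realising the hierarchical East mechanism described in the introduction, and then conclude via the standard bound $\bbE_\mu(\tau_0)=O(\trel/q)$. One introduces a nested sequence of mesoscopic scales $\ell_0<\ell_1<\dots<\ell_K$ with $K=O(\log(1/q))$, where $\ell_0$ is a critical-droplet scale of order $q^{-O(1)}$ and $\ell_K$ matches the typical distance from the origin to a critical droplet, of order $\exp(\Theta(1/q^\alpha))$. At each scale $i$ one defines a notion of \emph{super-good} droplet of linear size $\ell_i$ and proves, inductively in $i$, a local Poincaré inequality asserting that the KCM restricted to a box of side $\ell_i$, conditioned on an appropriate environment of scale-$(i-1)$ super-good droplets, relaxes with constant bounded by $e^{O(\log(1/q)^2/q^\alpha)}$. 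Because $\cS$ is \emph{finite}, $S^1$ contains a wide open arc of unstable directions on either side of each stable direction, and a scale-$i$ droplet can use a bounded number of scale-$(i-1)$ helpers to give birth to a neighbouring scale-$i$ droplet at distance $\Theta(\ell_i)$ along any such direction. Crucially this birth rule is \emph{East-like}: the parent enables the daughter but not conversely, so the induced dynamics at each level is of East type rather than the random-walk type that would yield a worse exponent.

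The main obstacle, and the novel contribution of the paper, is proving this single-scale Poincaré inequality for a genuinely non-monotone dynamics. Unlike bootstrap percolation, where a daughter droplet is simply built once the constraint is satisfied, the KCM must be able to undo each step, so the scale-$(i-1)$ helpers have to be created, exploited, erased, and re-created many times during the East-like motion. The natural tool is a canonical-paths or bottleneck bound for an abstract East chain whose spins are occupied by (random, fluctuating) scale-$(i-1)$ droplet configurations, combined with the inductive control on the mixing of these configurations; this is precisely where one gains the $\log(1/q)^2/q^\alpha$ exponent per scale rather than the random-walk $1/q^{2\alpha}$. Once the single-scale estimate is established, a long-range/bisection argument as in the previous universality papers in the series telescopes the $K$ per-scale costs into the global bound $\trel\le e^{O(\log(1/q)^3/q^\alpha)}$, whence \eqref{eq:1.2} and the theorem.
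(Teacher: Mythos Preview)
Your lower-bound paragraph is correct and matches the paper's one-line justification via \cite{Martinelli19}*{Lemma 4.3} and \eqref{eq:1.1}.

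The plan for \eqref{eq:1.2} has a genuine gap. The scale arithmetic is inconsistent: $K=O(\log(1/q))$ scales cannot bridge $\ell_0=q^{-O(1)}$ to $\ell_K=\exp(\Theta(1/q^\alpha))$ at the per-scale cost you claim. Geometric growth forces $\log(\ell_{i+1}/\ell_i)=\Theta\big(q^{-\alpha}/\log(1/q)\big)$, and an East step over that many blocks with facilitating density $e^{-\Theta(\log^2(1/q)/q^\alpha)}$ costs $e^{\Theta(\log(1/q)/q^{2\alpha})}$ per scale; telescoping gives exponent $2\alpha$, not $\alpha$. In the paper the East-like hierarchy lives \emph{entirely at polynomial scale}: the key object is the \emph{snail} $V=V_L^R(\ur)$ (Definition~\ref{def:snail}) with $L=q^{-O(w)}$, made of a base $B$ and $2k$ trapezoids wrapping around \emph{both} ends of $B$. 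Bisection (Lemma~\ref{prop:bisection}, Corollary~\ref{cor:trapezoid}) removes each trapezoid in $O(\log L)=O(\log(1/q))$ East steps, each costing one factor $1/\mu(\SG)=e^{\Theta(\log^2(1/q)/q^\alpha)}$; together with the base case (Proposition~\ref{prop:base case}) this yields $\gamma_V\le e^{O(\log^3(1/q)/q^\alpha)}$ (Theorem~\ref{thm:key step}).

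The structural point you are missing is the passage from polynomial to exponential scale. Because the snail's trapezoids wind around both ends, a super-good base can spawn a copy on \emph{either} side, so on the torus of side $\exp(w^5\log^3(1/q)/q^\alpha)$ the motion of super-good bases is \emph{FA1f-like}, not East-like (Lemma~\ref{lem:FAperiodic}, Theorem~\ref{thm:periodic}); its cost is only a bounded power of $1/\mu(\SG)$, independent of the torus size. Running an East argument at that scale is precisely what produced the $2\alpha$ exponent in the infinite-stable-directions case you are trying to beat. Finally, the paper does not bound $\trel$ on $\bbZ^2$ and invoke $\bbE_\mu(\tau_0)=O(\trel/q)$: it proves the Poincar\'e inequality only \emph{conditionally} on a high-probability event $\cE$ in finite volume, converts this into a hitting-time bound for $\{\o_0=0\}\cup\cE^c$ via the variational principle of \cite{Asselah01}, and handles the tail beyond $T=e^{1/q^{3\alpha}}$ with the crude a priori spectral-gap bound from \cite{Martinelli19a}.
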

The second statement \eqref{eq:1.3} follows immediately from \eqref{eq:1.2} together
with  \cite{Martinelli19}*{Lemma 4.3} and \eqref{eq:1.1}.
Theorem \ref{th:main} together with \cite{Hartarsky20}*{Theorem 2.8} and \cite{Martinelli19a}*{Theorem 2(a)} corrects \cite{Martinelli19a}*{Conjecture 3} and gives the following full universality picture for $\cU$-KCM with critical $\cU$.
\begin{mainthm}
  \label{th:main2}
Let $\cU$ be a critical update family. Then  
\begin{equation}
\label{eq:1.4}  \lim_{q\to 0}\frac{\log\log (\bbE_\mu(\t_0))}{\log(1/q)}=
  \begin{cases}
    \a & \text{if $|\cS|<+\infty$,}\\
    2\a & \text{otherwise.}
  \end{cases}
\end{equation}
\end{mainthm}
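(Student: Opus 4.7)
I would deduce Theorem~\ref{th:main2} by matching upper and lower bounds on the doubly-logarithmic scale, case by case on whether $|\cS|$ is finite or infinite. Each of the four bounds is supplied by a separate, already-available, ingredient; the only new one is Theorem~\ref{th:main}.

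In the regime $|\cS|<+\infty$, the upper bound $\limsup_{q\to 0}\log\log(\bbE_\mu(\t_0))/\log(1/q)\leq\alpha$ is read off directly from Theorem~\ref{th:main}: the estimate \eqref{eq:1.2} yields $\log\log(\bbE_\mu(\t_0))\leq\alpha\log(1/q)+3\log\log(1/q)+O(1)$, and dividing by $\log(1/q)$ and letting $q\to 0$ kills the correction. The matching lower bound $\liminf\geq\alpha$ follows from the standard KCM-versus-bootstrap-percolation comparison: \cite{Martinelli19}*{Lemma~4.3} bounds $\bbE_\mu(\t_0)$ for the KCM from below in terms of the typical bootstrap percolation infection time, which by \eqref{eq:1.1} is at least $e^{e^{(\alpha-o(1))\log(1/q)}}$ with high probability.

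In the regime $|\cS|=+\infty$, Theorem~\ref{th:main} does not apply and both bounds are imported from the literature. The upper bound $\limsup\leq 2\alpha$ is \cite{Hartarsky20}*{Theorem~2.8}, which controls relaxation through an infinite stack of stable directions by combining a mesoscopic East chain with an extra droplet-displacement layer, effectively doubling $\alpha$ on the doubly-logarithmic scale. The matching lower bound $\liminf\geq 2\alpha$ is \cite{Martinelli19a}*{Theorem~2(a)}, whose proof leverages the infinite set $\cS$ to build an East-like energy barrier that the dynamics must cross before the origin can be infected.

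The real difficulty therefore lies entirely inside Theorem~\ref{th:main}: once \eqref{eq:1.2} is established, the derivation of Theorem~\ref{th:main2} is mechanical $\limsup$/$\liminf$ bookkeeping. Hence the main obstacle is the construction, carried out in the body of the paper, of the hierarchical mesoscopic East-like droplet motion sketched in Section~\ref{sec:sketch}, which undermines the intuition behind the disproved \cite{Martinelli19a}*{Conjecture~3} and is what allows one to replace $2\alpha$ by $\alpha$ precisely when $|\cS|<+\infty$.
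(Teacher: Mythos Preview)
Your overall structure is exactly what the paper does: Theorem~\ref{th:main2} is not given a separate proof but is stated to follow from Theorem~\ref{th:main} together with \cite{Hartarsky20}*{Theorem~2.8} and \cite{Martinelli19a}*{Theorem~2(a)}, and the finite-$\cS$ case is already \eqref{eq:1.3}, which the paper derives from \eqref{eq:1.2} plus \cite{Martinelli19}*{Lemma~4.3} and \eqref{eq:1.1}, precisely as you describe.

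However, for the case $|\cS|=+\infty$ you have the two citations interchanged. In the paper \cite{Martinelli19a}*{Theorem~2(a)} supplies the \emph{upper} bound $\limsup\le 2\alpha$; indeed this is how it is used in Section~\ref{sec:Proof}, where \cite{Martinelli19a}*{Theorem~2} gives $\lambda_0\ge e^{-\Omega((\log q)^4/q^{2\alpha})}$, i.e.\ a spectral-gap (hence infection-time) upper bound valid for all critical families. The matching \emph{lower} bound $\liminf\ge 2\alpha$ in the infinite-$\cS$ case is the new contribution of the companion paper \cite{Hartarsky20}*{Theorem~2.8}, obtained via an East-like energy-barrier/combinatorial-bottleneck argument. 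Your verbal descriptions of the two techniques (``controls relaxation'' versus ``energy barrier'') are therefore attached to the wrong references. This is an attribution slip rather than a mathematical gap; once the labels are swapped, your argument coincides with the paper's.
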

Some key partial results in the direction of proving Theorem \ref{th:main2} were established in \cites{Martinelli19, Martinelli19a}. In
particular, in \cite{Martinelli19a} the scaling \eqref{eq:1.3} was proved for
any update family
$\cU$ with $\max_{u\in S^1}\a(u)=\a$ while \cite{Mareche20Duarte} proved \eqref{eq:1.4} with a higher degree of precision for a specific model with $|\cS|=\infty$, the Duarte model. We refer the interested reader to the
introduction of \cite{Hartarsky20} for a detailed account of the history
leading to \eqref{eq:1.4}. For supercritical update families the correct scaling was determined in \cite{Martinelli19a}*{Theorem 1} and \cite{Mareche20Duarte}*{Corollary 4.3}.

\subsection{Organisation of the paper}
We start by providing a heuristic explanation of the relaxation mechanism underlying our main result in Section \ref{sec:sketch}. In Section \ref{sec:prelim} we fix some notation and gather some
preliminary tools from bootstrap percolation that are by now well
established in the literature. We will not dwell on the technical
aspects of the definitions and invite the reader to refer to Section
4.3 of~\cite{Martinelli19a}, which we follow closely, for more details. For
reader's convenience we have collected in Section \ref{sec:tools}
three useful technical lemmas on certain one-dimensional kinetically constrained Markov processes. Although the proof of these lemma can be found or derived from the existing literature on KCM, we have added it in the \hyperref[app]{Appendix} for completeness. Section \ref{sec:core} contains the main new technical Poincar\'e inequality, while Theorem \ref{th:main} is proved in Section \ref{sec:Proof}. Finally, in Section \ref{sec:open pb} we discuss some natural open problems raised by the present work.

\section{Some heuristics behind Theorem \ref{th:main}}
\label{sec:sketch}
For a high-level and accessible introduction to
the main general ideas and techniques involved in bounding from above
$\bbE_\mu(\t_0)$ we refer to
\cite{Martinelli19a}*{Section 2.4}. There, in particular, it was stressed that while the necessary intuition is developed using dynamical considerations (e.g.\ by guessing some efficient mechanism to create/heal infection inside the system), the actual mathematical tools are mostly analytic and based on suitable (and, unfortunately, sometimes very technical) Poincar\'e inequalities. This paper makes no exception.

In order to go beyond the
results of \cite{Martinelli19a} and get the sharp scaling of Theorem
\ref{th:main} in the case of a \emph{finite} set of stable directions,
the following new key input is needed. 

For simplicity imagine that $\cU$ has only four stable
directions coinciding with the four natural directions of $\bbZ^2$. For a generic model with $|\cS|<\infty$ the mechanism is the same, the only difference being that in general `droplets' have a more complex geometry. Assume further that $\a(\vec e_1)=1$ and $\a(-\vec e_1)=\a(\pm \vec e_2)=2$ (see Figure \ref{fig:example}).
\begin{figure}
\begin{subfigure}{0.45\textwidth}
\begin{center}
\begin{tikzpicture}[line cap=round,line join=round,x=1.0cm,y=1.0cm, scale=0.7]
\draw [dash pattern=on 3pt off 3pt, xstep=1cm,ystep=1cm] (-2,-1) grid (0,0);
\clip(-2.5,-1.5) rectangle (0.5,0.5);
\draw (5pt,5pt) node {$0$};
\fill (-1,0) circle (2.5pt);
\fill (-2,0) circle (2.5pt);
\fill (0,-1) circle (2.5pt);
\end{tikzpicture}
~
\begin{tikzpicture}[line cap=round,line join=round,x=1.0cm,y=1.0cm, scale=0.7]
\draw [dash pattern=on 3pt off 3pt, xstep=1cm,ystep=1cm] (-2,0) grid (0,1);
\clip(-2.5,-0.5) rectangle (0.5,1.5);
\draw (5pt,-5pt) node {$0$};
\fill (-1,0) circle (2.5pt);
\fill (-2,0) circle (2.5pt);
\fill (0,1) circle (2.5pt);
\end{tikzpicture}

\begin{tikzpicture}[line cap=round,line join=round,x=1.0cm,y=1.0cm, , scale=0.7]
\draw [dash pattern=on 3pt off 3pt, xstep=1cm,ystep=1cm] (0,-2) grid (2,0);
\clip(-0.5,-2.5) rectangle (2.5,0.5);
\draw (-5pt,5pt) node {$0$};
\fill (1,0) circle (2.5pt);
\fill (2,0) circle (2.5pt);
\fill (0,-1) circle (2.5pt);
\fill (0,-2) circle (2.5pt);
\end{tikzpicture}
~
\begin{tikzpicture}[line cap=round,line join=round,x=1.0cm,y=1.0cm,, scale=0.7]
\draw [dash pattern=on 3pt off 3pt, xstep=1cm,ystep=1cm] (0,0) grid (2,2);
\clip(-0.5,-0.5) rectangle (2.5,2.5);
\draw (-5pt,-5pt) node {$0$};
\fill (1,0) circle (2.5pt);
\fill (2,0) circle (2.5pt);
\fill (0,1) circle (2.5pt);
\fill (0,2) circle (2.5pt);
\end{tikzpicture}
\end{center}
\caption{The four update rules belonging to $\cU$. }
\end{subfigure}
\quad
\begin{subfigure}{0.45\textwidth}
\begin{center}
\begin{tikzpicture}[line cap=round,line join=round,x=2.0cm,y=2.0cm, scale=0.7]
\clip(-1.25,-1.25) rectangle (1.25,1.25);
\draw(0,0) circle (2cm);
\draw (0,0)-- (1,0);
\draw (0,1)-- (0,0);
\draw (0,0)-- (-1,0);
\draw (0,0)-- (0,-1);
\fill [color=ffqqqq] (1,0) circle (2.5pt);
\draw (1.15,0) node {$1$};
\fill [color=ffqqqq] (0,1) circle (2.5pt);
\draw (0,1.15) node {$2$};
\fill [color=ffqqqq] (-1,0) circle (2.5pt);
\draw (-1.15,0) node {$2$};
\fill [color=ffqqqq] (0,-1) circle (2.5pt);
\draw (0,-1.15) node {$2$};
\end{tikzpicture}
\end{center}
\caption{The difficulties of the four stable directions of the model.}
\end{subfigure}
\caption{
  \label{fig:example}
  An example of an update family $\cU$ with finite number of stable directions examined in Section \ref{sec:sketch}.}
\end{figure}
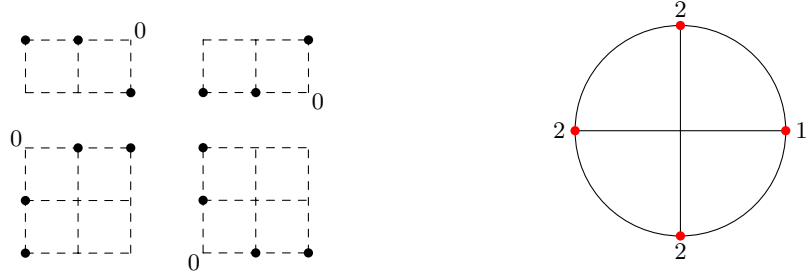
Consider now a \emph{critical droplet}, i.e.\ a square frame $D$, centered at
the origin, of side length $\approx
C\log(1/q)/q$, $C\gg 1,$ and $O(1)$-thickness, and suppose that $D$ is infected. Then,  w.h.p.\ (w.r.t.\ $\mu$) there will be extra infected sites next to $D$ in the $\vec e_1$-direction allowing $D$ to infect $D+\vec e_1$. However, it will be extremely unlikely
to find a pair of infected sites near each other and next to the other
three sides of $D$ because of the choice of the side length of $D$. We
conclude that w.h.p.\ it is easy for $D$ to advance forward in the $\vec
e_1$-direction but not in the other
directions. Moreover, as explained in detail in \cite{Martinelli19a}*{Section 2.4}, an
efficient way to effectively realize the motion in
the $\vec
e_1$-direction is via a generalised East path. In its essence the
latter can be described  by the following game. At every integer time a token is added or removed (if already present) at some integer point
according to the following rules:
\begin{itemize}
\item each integer can accomodate at most one
token;
\item a token can be freely added or removed at $1$;
\item for any $j\ge 2$ the operation of adding/removing a
  token at $j$ is allowed iff there is already a token at $j-1$.     
\end{itemize}
Given $n\in \bbN$, by an efficient path reaching distance $n$ we mean a way of 
adding tokens to the original empty configuration to finally place one
at $n$ which uses a minimal number of tokens. A combinatorial result (see \cite{Chung01})
says that the optimal number grows like $\log_2(n)$.

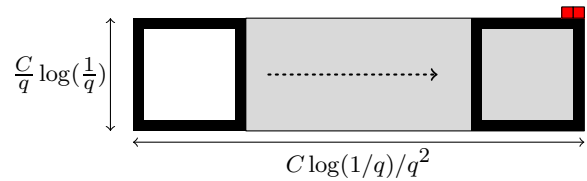
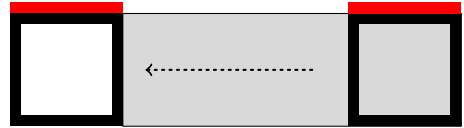
\begin{figure}
  \begin{subfigure}{1.0\linewidth}
  \centering
\begin{tikzpicture}[line cap=round,line join=round,x=0.25cm,y=0.25cm,scale=0.6]
\clip(-12,-5) rectangle (52,11);
\draw [fill=ColorGray] (10,0) rectangle (40,10);
  \fill[fill=black,fill opacity=1.0] (0,0) -- (10,0) -- (10,10) -- (0,10) -- cycle;
\fill[line width=0pt,color=ffffff,fill=ffffff,fill opacity=1.0] (1,9) -- (9,9) -- (9,1) -- (1,1) -- cycle;
\fill[fill=black] (40,10) -- (30,10) -- (30,0) -- (40,0) -- cycle;
\fill[line width=0pt,color=ffffff,fill=ColorGray,fill opacity=1.0] (31,1) -- (39,1) -- (39,9) -- (31,9) -- cycle;
\draw[fill=ffqqqq,fill opacity=1.0] (40,11) -- (40,10) -- (39,10) -- (39,11) -- cycle;
\draw[fill=ffqqqq,fill opacity=1.0] (38,11) -- (38,10) -- (39,10) -- (39,11) -- cycle;
\draw [thick,->, dotted] (12,5)--(27,5);
\draw [<->] (-2,0) -- (-2,10);
\draw [<->] (0,-1) -- (40,-1);
\draw (20,-0.8) node[anchor=north] {$C\log(1/q)/q^2$};
\draw (-1.5,5) node[anchor=east] {$\frac{C}{q}\log(\frac 1q)$};
\end{tikzpicture}
\caption{The infected droplet (black frame with width $O(1)$) progressively moves to the right in an East-like way using the extra infected sites present  w.h.p.\ in each column of the gray rectangle. This progression stops when reaching the first infected horizontal pair of sites at the correct height (red pair). }
\end{subfigure}
\begin{subfigure}{1.0\linewidth}
\centering
\begin{tikzpicture}[line cap=round,line join=round,x=0.25cm,y=0.25cm,scale=0.6]
  \clip(-12,-0.5) rectangle (52,11);
\draw [fill=ColorGray] (10,0) rectangle (40,10);
\fill[color=ffqqqq,fill=ffqqqq,fill opacity=1.0] (0,11) -- (0,10) -- (10,10) -- (10,11) -- cycle;
\fill[fill=black,fill opacity=1.0] (0,0) -- (10,0) -- (10,10) -- (0,10) -- cycle;
\fill[line width=0pt,color=ffffff,fill=ffffff,fill opacity=1.0] (1,9) -- (9,9) -- (9,1) -- (1,1) -- cycle;
\fill[fill=black] (40,10) -- (30,10) -- (30,0) -- (40,0) -- cycle;
\fill[line width=0pt,color=ffffff,fill=ColorGray,fill opacity=1.0] (31,1) -- (39,1) -- (39,9) -- (31,9) -- cycle;
\fill[color=ffqqqq,fill=ffqqqq,fill opacity=1.0] (40,11) -- (40,10) -- (30,10) -- (30,11) -- cycle;

\draw [thick,<-, dotted] (12,5)--(27,5);

\end{tikzpicture}
\caption{The infected droplet on the right grows into an $e_2$-extended droplet thanks to the infected pair of sites. The movement is then reverted, progressively retracting the extended droplet in an East way until reaching the original position.}
\end{subfigure}
 \caption{
\label{fig:mechanism:1}
The mechanism for the droplet to grow in the $\vec e_2$-direction.}
\end{figure}
The main new idea now is that, while w.h.p.\ the droplet $D$ will not
find a pair of infected sites (which are necessary to grow an extra
layer of infection in the $\vec e_2$-direction) \emph{next} to
e.g.\ its top side, w.h.p.\ it will find it at the right height within distance $C\log(1/q)/q^2$ in the $\vec
e_1$-direction (see Figure \ref{fig:mechanism:1}). Hence, a possible efficient way for $D$ to move one step in the $\vec e_2$-direction is to:
\begin{enumerate}[label=(\alph*),ref=\alph*]
\item 
\label{step:A}travel in the $\vec e_1$-direction  in a East-like way  until finding the necessary pair of infected sites within distance $C\log(1/q)/q^2$ from the origin;
\item grow there an extra layer in the $\vec e_2$-direction and retrace back to its original position while keeping the acquired extra layer
of infection.              
\end{enumerate}
A similar mechanism applies to the $-\vec e_2$-direction. Slightly more involved is the way in which $D$ can advance in the $-\vec e_1$-direction. In this case the extra infected pair needs to be found within distance $C\log(1/q)/q^2$ from the origin in the \emph{vertical} direction (see Figure \ref{fig:mechanism:2}). In order to reach it, $D$ performs an East-like movement upwards, each of whose steps is itself realised by the back-and-forth East motion in the $e_1$ direction described above. 
\begin{figure}
 \centering
\begin{tikzpicture}[line cap=round,line join=round,x=0.25cm,y=0.25cm,scale=0.6]
  \begin{scope}[shift={(-10,15)}]
  \fill[fill=black,fill opacity=1.0] (0,0) -- (10,0) -- (10,10) -- (0,10) -- cycle;
\fill[line width=0pt,color=ffffff,fill=ffffff,fill opacity=1.0] (1,9) -- (9,9) -- (9,1) -- (1,1) -- cycle;
\fill[fill=black] (40,10) -- (30,10) -- (30,0) -- (40,0) -- cycle;
\fill[line width=0pt,color=ffffff,fill=ffffff,fill opacity=1.0] (31,1) -- (39,1) -- (39,9) -- (31,9) -- cycle;
\draw[fill=ffqqqq,fill opacity=1.0] (40,11) -- (40,10) -- (39,10) -- (39,11) -- cycle;
\draw[fill=ffqqqq,fill opacity=1.0] (38,11) -- (38,10) -- (39,10) -- (39,11) -- cycle;
\draw [thick,->, dotted] (12,3)--(27,3);
\draw [thick,<-, dotted] (12,6)--(27,6); 

\fill[color=ffqqqq,fill=ffqqqq,fill opacity=1.0] (0,11) -- (0,10) -- (10,10) -- (10,11) -- cycle;

 \end{scope}
  
  \begin{scope}[rotate=90]
  \fill[fill=black,fill opacity=1.0] (0,0) -- (10,0) -- (10,10) -- (0,10) -- cycle;
\fill[color=ffqqqq,fill=ffqqqq,fill opacity=1.0] (0,11) -- (0,10) -- (10,10) -- (10,11) -- cycle;
\fill[line width=0pt,color=ffffff,fill=ffffff,fill opacity=1.0] (1,9) -- (9,9) -- (9,1) -- (1,1) -- cycle;
\fill[fill=black] (40,10) -- (30,10) -- (30,0) -- (40,0) -- cycle;
\fill[line width=0pt,color=ffffff,fill=ffffff,fill opacity=1.0] (31,1) -- (39,1) -- (39,9) -- (31,9) -- cycle;
\draw[fill=ffqqqq,fill opacity=1.0] (40,11) -- (40,10) -- (39,10) -- (39,11) -- cycle;
\draw[fill=ffqqqq,fill opacity=1.0] (38,11) -- (38,10) -- (39,10) -- (39,11) -- cycle;
\draw [thick, ->,dotted] (11,5)--(13,5);
\draw [thick, ->,dotted] (26.5,5)--(28.5,5);
\draw [thick, <-,dotted] (11,7)--(13,7);
\draw [thick, <-,dotted] (26.5,7)--(28.5,7);
\end{scope}
\draw [<->] (-15,0) -- (-15,40);
\draw (-30,20) node[anchor=north west] {$\frac{C}{q^2}\log(1/q)$};
\end{tikzpicture}
\caption{The mechanism for the droplet growth in the $-\vec
  e_1$-direction. The droplet moves in an East way in the $\vec
  e_2$-direction by
  making long excursions in the $\vec e_1$-direction as in 
  Figure~\ref{fig:mechanism:1}.
}
\label{fig:mechanism:2}
\end{figure}
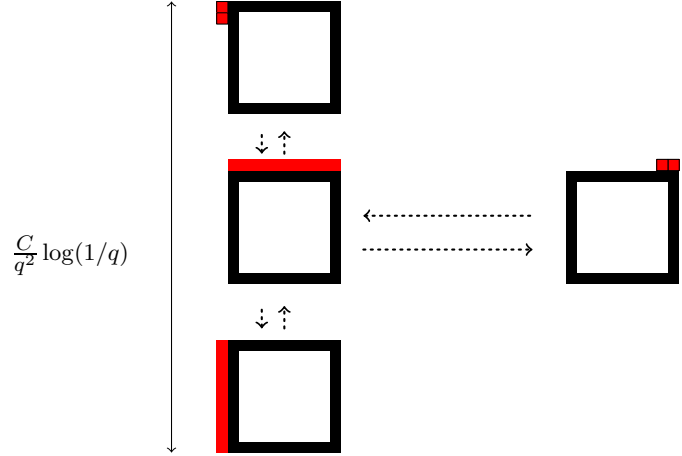

\noindent
Using the result for the typical time scales of the generalised East
process (see \cite{Martinelli19a}*{formula (3.5)}) it is easy to see that the
typical excursion of $D$ for a distance $\ell\equiv C\log(1/q)/q^2$ in the $\vec e_1$-direction requires a time lag
\[
\D t=  q^{-|D|O(\log(\ell))}= e^{O(\log^3(1/q))/q}.
\]
This time scale also bounds from above the time scale necessary to advance by one step in the ``hard'' directions $-\vec e_1,\pm \vec e_2$. 

In conclusion, by making a ``quasi-local''  (\emph{i.e} on a length
scale $\ell$) East-like motion in the easy direction $\vec e_1$, the
infected critical droplet $D$ can actually perform a sort of random
walk in which each step requires a time $\D t$. The
result of Theorem \ref{th:main} becomes now plausible provided that
one proves that anomalous regions of missing helping infected sites do not really constitute a serious obstacle. 

The above dynamic heuristics can be turned into a rigorous argument
using canonical
paths. However, a much neater approach is to prove a 
Poincar\'e inequality for the $\cU$-KCM restricted to a suitable
\emph{finite} domain of $\bbZ^2$ (see Theorem \ref{thm:key step}).   
More precisely, in the toy example discussed above the inequality that
we establish is as follows.

Let $V= B\cup T_0\cup T_1^-\cup T_1^+$ where
$B,T_0,T_1^\pm$  are as in Figure \ref{fig:mechanism:3}. The set $V$ is an example of a more general geometric construction developed in Section \ref{sec:core} and denoted \emph{snail with base $B$ and trapezoids $T_0,T_1^\pm$}. The ratio of the sides of the 
rectangle $B$ is $\Theta(q)$ while for the other rectangles it is $\Theta(1)$. 
\begin{figure}
 \centering
\begin{tikzpicture}[line cap=round,line join=round,x=0.25cm,y=0.25cm,scale=0.8]
\draw [thick] (0,0) rectangle (30,5);
\draw [thick] (0,5.3) rectangle (30,20.3);
\draw [thick] (-0.3,0) rectangle (-15.3,20.3);
\draw [thick] (30.3,0) rectangle (45.3,20.3);
\fill[fill=black,fill opacity=1.0] (0,0) -- (5,0) -- (5,5) -- (0,5) -- cycle;
\fill[line width=0pt,color=ffffff,fill=ffffff,fill opacity=1.0] (0.5,4.5) -- (4.5,4.5) -- (4.5,0.5) -- (0.5,0.5) -- cycle;
\draw (10,-0.8) node[anchor=north west] {$C \frac{\log(1/q)}{q^2}$};
\draw [<-] (0,-3)--(9,-3); 
\draw [->] (20,-3)--(30,-3);
\draw [<->] (-1,0)--(-1,5);
\draw (-11,5) node[anchor=north west] {$C \frac{\log(1/q)}{q}$};
\draw (15,2.5) node {$B$};
\draw (15,11.5) node {$T_0$};
\draw (-8,11.5) node {$T_1^-$};
\draw (38,11.5) node {$T_1^+$};
\draw (2.4,2.5) node {$D$};
\end{tikzpicture}
\caption{The geometric setting  for the toy model of Figure \ref{fig:example}.}
\label{fig:mechanism:3}
\end{figure}
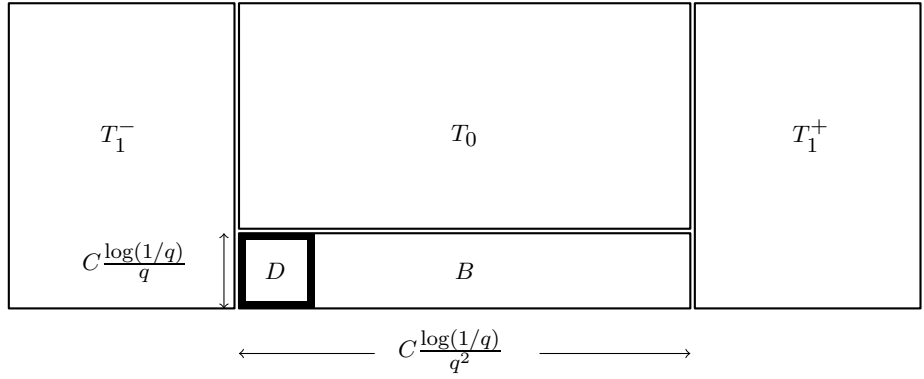

Let
$\O_0$ consist of all
configurations of $\{0,1\}^V$ such that: 
\begin{itemize}
\item each column of $B$ contains an infected site;
\item each row of $T_0$ contains a pair of adjacent infected sites;
\item each column of $T_1^\pm$ contains a pair of adjacent infected sites.
\end{itemize}
Notice that by choosing $C$ large enough $\mu(\O_0)=1-o(1)$ as $q\to 0.$ Then, in the key Theorem~\ref{thm:key step}, we prove that for any $f:\{0,1\}^V\to \bbR$
\[
\1_{\{D\text{ is infected}\}}\var_V(f\tc \O_0)\le
e^{O(\log(1/q)^3)/q}\times \cD(f),
\]
where $\cD(f)$ is the Dirichlet form of $f$ (see \eqref{eq:Dirichlet}). One can interpret the above inequality as saying that the KCM in $V$ restricted to the good set $\O_0$ has a relaxation time at most $e^{O(\log(1/q)^3)/q}$. We prove this by an inductive procedure over $T_0,T_1^\pm$ which, in some sense, makes rigorous the dynamic heuristics described above.

\section{Notation and preliminaries}
\label{sec:prelim}
In this section we gather the relevant notation and basic inputs from bootstrap percolation and KCM theories. We shall always denote spatial regions (either in $\bbZ^2$ or in $\bbR^2$) with capital letters and events in the various probability spaces with calligraphic capital letters.

\subsection{Bootstrap percolation}
\subsubsection{Stable and quasi-stable directions}
\label{sec:stable-dir}For every integer $n$, we write $[n]:=\{0,1,\dots, n-1\}$. We fix a critical update family $\cU$ with difficulty $\alpha=\alpha(\cU)$ and with a \emph{finite} set $\cS$ of stable directions.\footnote{By Lemmas~2.6 and 2.8 of~\cite{Bollobas14} this is equivalent to the fact that all (stable) directions have finite difficulty.} Using the definition of $\a(\cU)$ (see \eqref{eq:def:alpha}) one can fix an open semicircle $C$ with midpoint $u_0$, one of whose endpoints is in $\cS$ and such that $\max_{u\in C}\alpha(u)=\alpha$. Using~\cite{Bollobas15}*{Lemma~5.3} (see also~\cite{Bollobas14}*{Lemma~3.5} and~\cite{Martinelli19a}*{Lemma~4.6}) one can choose a set of rational directions\footnote{A direction $u\in S^1$ is rational if $\tan(u)\in\bbQ$ or, equivalently, if $su\in\bbZ^2\}$ for some $s>0$.} $\cS'\supset\cS$, so that for every two consecutive elements $u$ and $v$ of $\cS'$ there exists an update rule $X\in\cU$ such that $X\subset(\bbH_u\cup \ell_u)\cap(\bbH_v\cup \ell_v)$, where $\ell_{u'}=\{x\in \bbR^2:\< x,u'\>=0\}$ is the boundary of $\bbH_{u'}=\{x\in\bbR^2:\<x,u'\><0$ for any $u'\in S^1$. The elements of $\cS'$ are usually referred to as \emph{quasi-stable directions}. Then our fundamental set of directions will be
\begin{equation}
\label{eq:1}
\hS=\bigcup_{u\in\cS'}(\{u,u_0-(u-u_0)\}+\{0,\pi/2,\pi,3\pi/2\}).
\end{equation}
In other words, we start with the stable directions, add to them the quasi-stable ones, reflect them at $u_0$ and finally make the set obtained invariant by rotation by $\pi/2$. By construction the
cardinality of $\hS$ is a multiple of $4$. 
\begin{rem}
Let us note that invariance by rotation and reflection is cosmetic and one could in fact deal directly with the set of quasi-stable directions from~\cite{Bollobas15}, though notation would be more laborious and drawings less aesthetic.
\end{rem}

We write $u_0,u_1,\ldots u_{4k-1}$ for the elements of $\hS$ ordered
clockwise starting with $u_0$ and $\widehat\cS_0$ for those elements of $\hS$ belonging to the semicircle $C$. For all figures we shall take
$\hS=\{i\pi/4,i\in\{0,1,\ldots,7\}\}$ and $u_0= \pi$. When referring
to $u_i$, the index $i$ will be considered modulo $4k$. With this convention $\widehat\cS_0=\{u_{-k+1},\dots, u_{k-1}\}$.

\subsubsection{$\cU$-bootstrap percolation restricted to $\L\subset \bbZ^2$}
In the sequel, we will sometimes need the following slight variation of the $\cU$-bootstrap percolation. Given $\L\subset \bbZ^2$ and a set $A\subset \L$ of initial infection, we will write $[A]_\cU^\L$ for the closure $\bigcup_{t\ge 0} A_t^\L$ of the $\cU$-bootstrap percolation restricted to $\L,$ $(A_t^\L)_{t\ge 0},$ defined by
\[
A_{t+1}^\L= A_t^\L\cup\{x\in \L,\exists U\in \cU,  x+U\subset A_t^\L\}.
\]

\subsubsection{Geometric setup}
\label{sec:geosetup}We next turn to defining the various geometric domains we will need to
consider. As the notation is a bit cumbersome, the reader is invited
to systematically consult the relevant figures. We fix a large integer $w$ and a small positive number
$\d$ depending on $\cU$ (e.g.\ $w$
much larger than the diameter of $\cU$ and of the largest difficulty
of stable directions), but not depending on $q$. When using asymptotic
notation (as $q\to 0$) we will assume that the implicit constants do
not depend on $w,\d$ and $q$.
Throughout the entire
paper we shall consider that $q$ is small, as we are interested in the
$q\to 0$ limit. In particular, we shall assume that $ q$ is so small
that any length scale diverging to $+\infty$ as $q\to 0$ will be
(much) larger than
the constant $w$.

\begin{defn}
\label{def:polygon}
Consider a closed convex polygon $P$ in
$\bbR^2$. Assume that the outward normal vectors to
the sides of $P$ belong to $\hS$ and that $u$ is one of them. Then we write
$\partial_{u}P$ for the side whose outward normal is $u_i$.
\end{defn}

We can now define the notion of droplet that will be relevant for our
setting (see Figure~\ref{fig:ring}). In the sequel for $u\in S^1$ we set $\overline \bbH_{u}=\bbH_u\cup\ell_u$ for the closure of $\bbH_u$. Moreover, given $x\in\bbR^2$ and $s\in \bbR$, we set
$\bbH_{u}(x)=\bbH_{u}+x$, $\bbH_u(s)=\bbH_u(su)$ and
similarly for $\overline \bbH_u$ and $\ell_u$. Finally, for any $u_i\in \hat\cS$ we set $\r_i=\inf\{\r>0,\exists x\in\bbZ^2, \<x,u_i\>=\r\}$ for
the smallest positive $s$ such that $\ell_{u_i}(s)\neq \ell_{u_i}$ and
$\ell_{u_i}(s)\cap \bbZ^2\neq \varnothing$.
\begin{figure}
\begin{center}
\begin{tikzpicture}[line cap=round,line join=round,x=1.5cm,y=1.5cm]
\fill[fill=black,fill opacity=0.4] (-0.41,1) -- (-1,0.41) -- (-1,-0.41) -- (-0.41,-1) -- (0.41,-1) -- (1,-0.41) -- (1,0.41) -- (0.41,1) -- cycle;
\fill[line width=0pt,color=ffffff,fill=ffffff,fill opacity=1.0] (-0.35,0.85) -- (-0.85,0.35) -- (-0.85,-0.35) -- (-0.35,-0.85) -- (0.35,-0.85) -- (0.85,-0.35) -- (0.85,0.35) -- (0.35,0.85) -- cycle;
\fill[fill=black,pattern=horizontal lines] (-0.2,1) -- (-0.41,1) -- (-1,0.41) -- (-1,-0.41) -- (-0.41,-1) -- (-0.2,-1) -- (-0.85,-0.35) -- (-0.85,0.35) -- cycle;
\draw (-0.41,1)-- (-1,0.41);
\draw (-1,0.41)-- (-1,-0.41);
\draw (-1,-0.41)-- (-0.41,-1);
\draw (-0.41,-1)-- (0.41,-1);
\draw (0.41,-1)-- (1,-0.41);
\draw (1,-0.41)-- (1,0.41);
\draw (1,0.41)-- (0.41,1);
\draw (0.41,1)-- (-0.41,1);
\draw (-0.2,1)-- (-0.41,1);
\draw (-0.41,1)-- (-1,0.41);
\draw (-1,0.41)-- (-1,-0.41);
\draw (-1,-0.41)-- (-0.41,-1);
\draw (-0.41,-1)-- (-0.2,-1);
\draw (-0.2,-1)-- (-0.85,-0.35);
\draw (-0.85,-0.35)-- (-0.85,0.35);
\draw (-0.85,0.35)-- (-0.2,1);
\draw [->] (-1,0) -- (-1.2,0);
\draw [<->] (0.71,-0.71) -- (0.6,-0.6);
\draw [<->] (0,0) -- (0.6,0.6);
\begin{scriptsize}
\draw[color=black] (0.55,0.15) node {$R_3-w$};
\draw[color=black] (-1.1,0.1) node {$u_0$};
\draw[color=black] (0.8,-0.7) node {$w$};
\end{scriptsize}
\end{tikzpicture}
\end{center}
\caption{The shaded region is the \emph{quasi-stable annulus} $A$, while the
  hatched one is the \emph{quasi-stable half-annulus} $HA$. As anticipated all the
  radii $R_i$ are much larger than the width $w$.}
\label{fig:ring}
\end{figure}
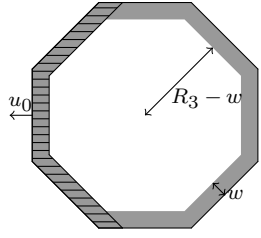
\begin{defn}[Quasi-stable annulus and half-annulus] 
\label{def:ring}

Fix a radius $R=R(q)$ such that $\lim_{q\to 0}R(q)=+\infty$ and let $R_i=\r_i\left\lfloor\frac{R}{\r_i}\right\rfloor$ for $i\in[4k]$. We call the subset of $\bbR^2$
\[
A=\bigcap_{i\in [4k]}\overline\bbH_{u_i}(R_i)\setminus \bigcap_{i\in [4k]}\bbH_{u_i}(R_i-w) 
\]
the \emph{quasi-stable annulus} (or simply the \emph{annulus}) with radius
$R$ and width $w$ centered at the origin. We write $\Aint$ for the
region $\bigcap_{i\in [4k]}\bbH_{u_i}(R_i-w)$ enclosed by
$A$. Clearly, the outer boundary of $A$ is a closed convex polygon $P$
satisfying the assumption of Definition \ref{def:polygon} and we 
write $\partial_{u_i} A$ for $\partial_{u_i} P$. We also let 
\begin{equation}
    \label{eq:boundaryC}
\partial_{\widehat \cS_0}A=\bigcup_{u\in\widehat \cS_0}\partial_{u} A,
\end{equation}
and we call 
\[HA=\bigcap_{i=-k}^{k}\overline\bbH_{u_i}(R_i)\setminus 
  \bigcap_{i=-k+1}^{k-1}\bbH_{u_i}(R_i-w)
\] 
the
\emph{quasi-stable half-annulus} of radius $R$ and width $w$.
\end{defn}

Our approach will consist in building progressively larger domains for which we can bound the Poincar\'e constant of the finite volume KCM process conditionally on the simultaneous occurrence of a certain likely event \emph{and} the presence of an infected annulus. We next define these domains (see Figure~\ref{fig:snails}).
Recall that $\d$ is a small constant depending on the update family $\cU$.
\begin{defn}[Snails]
\label{def:snail}
Recall $R$ and $R_i$ from Definition \ref{def:ring}. Let $L=L(q)>0$ be such that $\lim_{q\to 0}L(q)=+\infty$ and assume that 
$\frac{L\<u_0,u_{k-1}\>}{\r_{k-1}}\in\bbN$ (i.e.\ $\ell_{u_{k-1}}(Lu_0)$ contains lattice sites). We call a sequence of
non-negative numbers
$\ur=(r_0,r_1,\dots, r_{2k})$ \emph{admissible} if 
\begin{align*}
  0\le r_0&{}\le\d L,&r_i&{}\le \d r_{i-1},& r_{2k}&{}=0.
\end{align*}
Given an admissible
$\ur$ we call the set
\[
V_L^{R,+}(\ur) =
  \bigcap_{i=-k+1}^{k-1}\overline \bbH_{u_i}(R_i+L\<u_0,u_i\>) \cap
\bigcap_{i=k}^{3k}\overline \bbH_{u_i}(R_i+r_{i-k}) 
\] the \emph{right-snail}
with parameters $(R,L,\ur)$. Using the symmetric construction
of $\hS,$ the left-snail $V_L^{R,-}(\ur)$ with parameters
$(R,L,\ur)$ is simply defined as the reflection of the
right-snail w.r.t.\ the line orthogonal to $u_0$ and passing through the point $\frac 12 L u_0$. Finally the \emph{snail} with parameters $(R,L,\ur)$ is the set
\[
  V^R_L(\ur)=V_L^{R,+}(\ur)\cup V_L^{R,-}(\ur).
\]
We systematically
drop the parameters $R$, $L$, and $\ur$ from our notation when no ambiguity arises.
\end{defn}
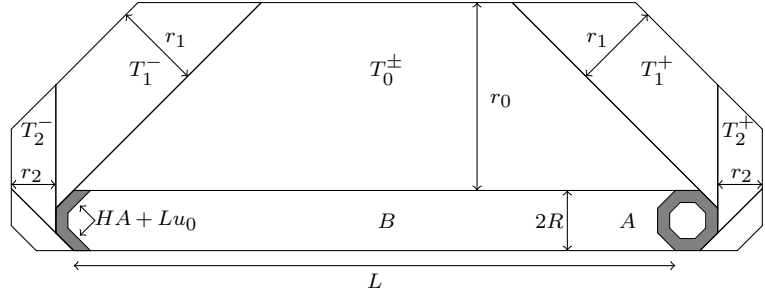
\begin{figure}
\begin{center}
\begin{tikzpicture}[line cap=round,line join=round,x=0.4cm,y=0.4cm]
\clip(-25,-3) rectangle (5,7.3);
\fill[fill=black,fill opacity=0.5] (-0.41,1) -- (0.41,1) -- (1,0.41) -- (1,-0.41) -- (0.41,-1) -- (-0.41,-1) -- (-1,-0.41) -- (-1,0.41) -- cycle;
\fill[color=ffffff,fill=ffffff,fill opacity=1.0] (-0.25,0.6) -- (0.25,0.6) -- (0.6,0.25) -- (0.6,-0.25) -- (0.25,-0.6) -- (-0.25,-0.6) -- (-0.6,-0.25) -- (-0.6,0.25) -- cycle;
\fill[color=ffffff,fill=ffffff,fill opacity=1.0] (-20.25,0.6) -- (-19.75,0.6) -- (-19.4,0.25) -- (-19.4,-0.25) -- (-19.75,-0.6) -- (-20.25,-0.6) -- (-20.6,-0.25) -- (-20.6,0.25) -- cycle;
\fill[fill=black,fill opacity=0.5] (-19.85,1) -- (-20.41,1) -- (-21,0.41) -- (-21,-0.41) -- (-20.41,-1) -- (-19.85,-1) -- (-20.6,-0.25) -- (-20.6,0.25) -- cycle;
\draw (-0.41,1)-- (0.41,1);
\draw (0.41,1)-- (1,0.41);
\draw (1,0.41)-- (1,-0.41);
\draw (1,-0.41)-- (0.41,-1);
\draw (0.41,-1)-- (-0.41,-1);
\draw (-0.41,-1)-- (-1,-0.41);
\draw (-1,-0.41)-- (-1,0.41);
\draw (-1,0.41)-- (-0.41,1);
\draw [color=ffffff] (-0.25,0.6)-- (0.25,0.6);
\draw [color=ffffff] (0.25,0.6)-- (0.6,0.25);
\draw [color=ffffff] (0.6,0.25)-- (0.6,-0.25);
\draw [color=ffffff] (0.6,-0.25)-- (0.25,-0.6);
\draw [color=ffffff] (0.25,-0.6)-- (-0.25,-0.6);
\draw [color=ffffff] (-0.25,-0.6)-- (-0.6,-0.25);
\draw [color=ffffff] (-0.6,-0.25)-- (-0.6,0.25);
\draw [color=ffffff] (-0.6,0.25)-- (-0.25,0.6);
\draw (-0.6,0.25)-- (-0.6,-0.25);
\draw (-0.6,-0.25)-- (-0.25,-0.6);
\draw (-0.25,-0.6)-- (0.25,-0.6);
\draw (0.25,-0.6)-- (0.6,-0.25);
\draw (0.6,-0.25)-- (0.6,0.25);
\draw (0.6,0.25)-- (0.25,0.6);
\draw (0.25,0.6)-- (-0.25,0.6);
\draw (-0.25,0.6)-- (-0.6,0.25);
\draw (-20.41,1)-- (-21,0.41);
\draw (-21,0.41)-- (-21,-0.41);
\draw (-20.41,-1)-- (-21,-0.41);
\draw (-20.41,1)-- (-14.17,7.25);
\draw (-14.17,7.25)-- (-5.83,7.25);
\draw (-5.83,7.25)-- (0.41,1);
\draw (0.41,1)-- (-20.41,1);
\draw (-5.83,7.25)-- (-1.73,7.25);
\draw (-1.73,7.25)-- (1,4.51);
\draw (1,4.51)-- (1,0.41);
\draw (1,0.41)-- (-5.83,7.25);
\draw (1,4.51)-- (2.48,3.04);
\draw (2.48,3.04)-- (2.48,1.06);
\draw (2.48,1.06)-- (1,-0.41);
\draw (1,-0.41)-- (1,4.51);
\draw (2.48,1.06)-- (2.48,-0.17);
\draw (2.48,-0.17)-- (1.65,-1);
\draw (1.65,-1)-- (0.41,-1);
\draw (0.41,-1)-- (2.48,1.06);
\draw (-20.41,-1)-- (-0.41,-1);
\draw (-14.17,7.25)-- (-18.27,7.25);
\draw (-18.27,7.25)-- (-21,4.51);
\draw (-21,4.51)-- (-21,0.41);
\draw (-21,0.41)-- (-14.17,7.25);
\draw (-21,4.51)-- (-22.48,3.04);
\draw (-22.48,3.04)-- (-22.48,1.06);
\draw (-22.48,1.06)-- (-21,-0.41);
\draw (-21,-0.41)-- (-21,4.51);
\draw (-22.48,1.06)-- (-22.48,-0.17);
\draw (-22.48,-0.17)-- (-21.65,-1);
\draw (-21.65,-1)-- (-20.41,-1);
\draw (-20.41,-1)-- (-22.48,1.06);
\draw (-19.85,1)-- (-20.41,1);
\draw (-20.41,1)-- (-21,0.41);
\draw (-21,0.41)-- (-21,-0.41);
\draw (-21,-0.41)-- (-20.41,-1);
\draw (-20.41,-1)-- (-19.85,-1);
\draw (-19.85,-1)-- (-20.6,-0.25);
\draw (-20.6,-0.25)-- (-20.6,0.25);
\draw (-20.6,0.25)-- (-19.85,1);
\draw [->] (-19.7,0)--(-20.2,0.5);
\draw [->] (-19.7,0)--(-20.2,-0.5);
\draw [<->] (-7,1)--(-7,7.25);
\draw [<->] (-3.38,4.8)--(-1.33,6.85);
\draw [<->] (-16.62,4.8)--(-18.67,6.85);
\draw [<->] (1,1.2)--(2.48,1.2);
\draw [<->] (-21,1.2)--(-22.48,1.2);
\draw [<->] (-20.41,-1.5)--(-0.41,-1.5);
\draw [<->] (-4,-1)--(-4,1);
\begin{scriptsize}
   \draw (-18,0) node {$HA+Lu_0$};
\draw (-2,0) node {$A$};
   \draw (-10,5) node {$T_0^\pm$};
   \draw (-10,0) node {$B$};
   \draw (-1,5) node {$T_1^+$};
   \draw (-18,5) node {$T_1^-$};
   \draw (-21.6,3) node {$T_2^-$};
   \draw (1.7,3) node {$T_2^+$};
   \draw (-6.2,4) node {$r_0$};
   \draw (-3,6) node {$r_1$};
   \draw (-17,6) node {$r_1$};
   \draw (1.8,1.5) node {$r_2$};
   \draw (-21.8,1.5) node {$r_2$};
   \draw (-10.4,-2) node {$L$};
   \draw (-4.6,0) node {$2R$};
\end{scriptsize}
\end{tikzpicture}
\end{center}
\caption{A snail $V=V^+\cup V^-$ with its base $B$ and its trapezoids $T_0^\pm, T_1^\pm,\dots$. The right-snail of $V$ is $V^+= B\cup \bigcup_i T_i^+$ while the left-snail is $V^-= B\cup \bigcup_i T_i^-$. In Section \ref{sec:core} the shaded quasi-stable annulus $A$ and the half-annulus $HA+Lu_0$ will act as an infected boundary condition.}
\label{fig:snails}
\end{figure}
\begin{defn}
\label{def:ti}
We observe that any right-snail $V^{R,+}_L(\ur)$ can be thought
of as the set obtained by 
stacking together as in Figure~\ref{fig:snails} its \emph{base} defined as
\[B=V^{R}_L((0,\dots,0))\]
and its trapezoids defined as 
\begin{align*}
T^+_i={}&V^{R,+}_L(r_0,\dots,r_i,0,\dots,0)\setminus V^{R,+}_L(r_0,\dots,r_{i-1},0,\dots,0)\\
={}&
(\overline\bbH_{u_{k+i}}(R_{k+i}+r_i)\setminus\overline\bbH_{u_{k+i}}(R_{k+i}))
\cap\overline\bbH_{u_{k+i-1}}(R_{k+i-1}+r_{i-1})\cap\overline\bbH_{u_{k+i+1}}(R_{k+i+1})
\end{align*}
with the convention $r_{-1}=L\<u_0,u_{k-1}\>$. Notice that the base $B$ is characterized by two parameters $R,L$ called \emph{radius} and \emph{length} respectively.\end{defn}
With this picture in mind the positive values of $\ur$
coincide with the heights of the corresponding non-empty trapezoids. A similar decomposition holds for the left-snail. In the sequel, it will be convenient to partition the lattice sites in each trapezoid $T_i^+$ into disjoint \emph{slices} $ST^+_{i,j}, j=1,2,\dots,$ with each slice consisting of all the lattice sites of the trapezoid lying on a \emph{common line} of $\bbR^2$ orthogonal to the direction $u_{k+i}$. Similarly for the lattice sites contained in the \emph{truncated base} $B^\circ := B\setminus (A\cup \Aint\cup (HA+Lu_0))$. In this case each slice, denoted $SB_j,j=1,2,\dots,$ will consist of all the sites belonging to a common suitable translate in the $u_0$-direction of $\partial_{\widehat \cS_0}A$ defined in \eqref{eq:boundaryC}. Recall from Definition \ref{def:ring} $\r_i$ and $R_i=\rho_i\lfloor R/\rho_i\rfloor$, where $R$ is the radius of the annulus $A$. 
\begin{defn}
\label{def:snail2} Fix a snail and suppose that its trapezoid $T_i^+$, $i\in [2k]$ is non-empty. The $j$\textsuperscript{th} slice of $T_i^+\cap \bbZ^2$, in the sequel $ST^+_{i,j},$ is the set $ST^+_{i,j}=T_i^+\cap\ell_{u_{k+i}}(R_{k+i}+j\r_{k+i})\cap\bbZ^2$ in such a way that
$T_i^+\cap\bbZ^2=\bigcup_{j>0}ST^+_{i,j}$. Similarly for the left trapezoid $T_i^-$ if non-empty.

Turning to the truncated base $\oB$, we first set $\l_{j+1}=\inf\{\l>\l_j,(\l u_0+\partial_{\hS_0}A)\cap\bbZ^2\neq\varnothing\}$ with $\l_0=0$. Then we define the $j$\textsuperscript{th} slice of the truncated base $\oB$ of the snail, $SB_j,$ and its $i$\textsuperscript{th}-side, $SB_{i,j},$ as \begin{align*}SB_j&{}=(\l_j+\partial_{\hS_0}A)\cap\oB\cap\bbZ^2,\\
SB_{i,j}&{}=(\l_j+\partial_{u_i}A)\cap\oB\cap\bbZ^2.
\end{align*}
\end{defn}
Note that for any admissible sequence
$\ur$
a non-empty slice of the trapezoid $T^+_i$ consists of all lattice points of a segment $I\subset \bbR^2$ orthogonal to $u_{k+i}$ with length $\O(r_{i-1})$ and such that $I\cap \bbZ^2\neq \emptyset$. Similarly, the number of lattice sites in each slice of $\oB$ is $\Theta(R)$. In the sequel we will only consider non-empty slices without explicitly specifying the range of the index $j>0$.

\subsubsection{Helping sets}
Recall Definition \ref{def:stable:alpha} and Remark \ref{rem:finite difficulty}. If $u$ is a stable direction, then the infected half-plane $\bbH_u$ needs \emph{finitely} many (exactly $\a(u)$) extra infected sites in $\bbR^2\setminus \bbH_u$ in order to infect \emph{infinitely} many sites on the line $\ell_u$. If only a finite portion of $\bbH_u$ is infected, e.g.\ the dashed region in Figure~\ref{fig:lem:extension}, then the propagation of infection to some portion of the line $\ell_u$ is a delicate problem. A special case which suffices for our purposes is covered in the next lemma (see \cite{Bollobas14}*{Lemma~3.4} and \cite{Bollobas15}*{Lemma~5.2}).

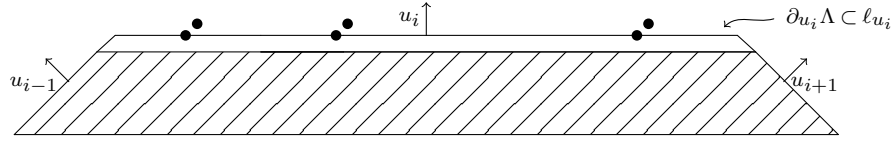
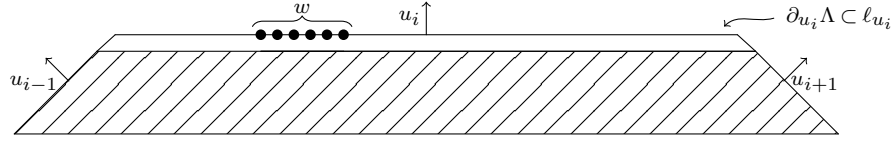
\begin{figure}
\begin{subfigure}{\textwidth}
\centering\begin{tikzpicture}[line cap=round,line join=round,x=2.2cm,y=2.2cm]
\fill[pattern=my north east lines] (1.99,6.5) -- (2.49,6) -- (-2.49,6) -- (-1.99,6.5) -- cycle;
\fill[fill=white] (-0.5,6.6) -- (-0.5,6.5) -- (1.99,6.5) -- (1.88,6.6) -- cycle;
\draw (-1,6.6)-- (-0.5,6.6);
\draw (-0.5,6.5)-- (-1,6.5);
\draw (1.99,6.5)-- (2.49,6);
\draw (2.49,6)-- (-2.49,6);
\draw (-2.49,6)-- (-1.99,6.5);
\draw (-1.99,6.5)-- (1.99,6.5);
\draw (-0.5,6.5)-- (1.99,6.5);
\draw (1.99,6.5)-- (1.88,6.6);
\draw (1.88,6.6)-- (-0.5,6.6);
\draw [->] (-2.16,6.32) -- (-2.3,6.46);
\draw [->] (0,6.6) -- (0,6.8);
\draw [->] (2.16,6.32) -- (2.3,6.46);
\draw (-0.5,6.5)-- (-1,6.5);
\draw (-1.88,6.6)-- (-1.99,6.5);
\draw (-1.88,6.6)-- (-1,6.6);
\foreach \x [count=\n] in {-1,1,5}{
  \begin{scope}[xshift = \x cm]
    \fill (-1,6.6) circle (2pt);
    \fill (-1+0.07,6.6+0.07) circle (2pt);
    \end{scope}
}
\begin{scriptsize}
\draw (-2.37,6.3) node {$u_{i-1}$};
\draw (-0.1,6.7) node {$u_i$};
\draw (2.5,6.7) node {$\partial_{u_i} \L\subset \ell_{u_i}$};
\draw (2.35,6.3) node {$u_{i+1}$};
\end{scriptsize}
\draw[<-] (1.8,6.65) to [out=45,in=195] (2.1,6.7) ;
\end{tikzpicture}
\caption{\label{subfig:w}An example helping set (the black dots) consisting of three disjoint copies of $Z$ shifted along $\ell_{u_i}$. In the figure $\a(u_i)=2$ with $Z=\{(0,0),(1,1)\}$ and $m=3$.}
\end{subfigure}

\begin{subfigure}{\textwidth}
\centering
\begin{tikzpicture}[line cap=round,line join=round,x=2.2cm,y=2.2cm]
\fill[pattern=my north east lines] (1.99,6.5) -- (2.49,6) -- (-2.49,6) -- (-1.99,6.5) -- cycle;
\fill[fill=white] (-0.5,6.6) -- (-0.5,6.5) -- (1.99,6.5) -- (1.88,6.6) -- cycle;
\draw (-1,6.6)-- (-0.5,6.6);
\draw (-0.5,6.5)-- (-1,6.5);
\draw (1.99,6.5)-- (2.49,6);
\draw (2.49,6)-- (-2.49,6);
\draw (-2.49,6)-- (-1.99,6.5);
\draw (-1.99,6.5)-- (1.99,6.5);
\draw (-0.5,6.5)-- (1.99,6.5);
\draw (1.99,6.5)-- (1.88,6.6);
\draw (1.88,6.6)-- (-0.5,6.6);
\draw [->] (-2.16,6.32) -- (-2.3,6.46);
\draw [->] (0,6.6) -- (0,6.8);
\draw [->] (2.16,6.32) -- (2.3,6.46);
\draw (-0.5,6.5)-- (-1,6.5);
\draw (-1.88,6.6)-- (-1.99,6.5);
\draw (-1.88,6.6)-- (-1,6.6);
\fill (-1,6.6) circle (2pt);
\fill (-0.5,6.6) circle (2pt);
\fill (-0.9,6.6) circle (2pt);
\fill (-0.8,6.6) circle (2pt);
\fill (-0.7,6.6) circle (2pt);
\fill (-0.6,6.6) circle (2pt);
\begin{scriptsize}
\draw[decorate,decoration={brace,amplitude=4pt}] (-1.05,6.65)--(-0.45,6.65) node [black,midway,yshift=7] {$w$};
\draw (-2.37,6.3) node {$u_{i-1}$};
\draw (-0.1,6.7) node {$u_i$};
\draw (2.5,6.7) node {$\partial_{u_i} \L\subset \ell_{u_i}$};
\draw (2.35,6.3) node {$u_{i+1}$};
 \end{scriptsize}
\draw[<-] (1.8,6.65) to [out=45,in=195] (2.1,6.7) ;
\end{tikzpicture}
\caption{Illustration of $w$ consecutive sites of $\partial_{u_i}\L$.}
\end{subfigure}
\caption{
The setting of Lemma~\ref{lem:strip}. In the figure $u_i$ is the upwards direction and the hatched trapezoid represents the lattice sites in $\L\setminus \partial_{u_i}\L$. The lemma states that if the hatched region and the black sites are infected, $\partial_{u_i}\L$ also becomes infected (in the $\cU$ bootstrap percolation process restricted to a suitable region).}
\label{fig:lem:extension}
\end{figure}
\begin{lem}\label{lem:strip}
Fix $u=u_i$, $i\in[4k]$ and recall that $w$ is a large enough integer (depending on $\cU$) and let $r\ge w^2$. Let $\L:=\L(u,w,r)=\overline \bbH_{u_{i-1}}(r)\cap\overline\bbH_{u_{i}}\cap\overline \bbH_{u_{i+1}}(r)\cap\overline\bbH_{u_{i+2k}}(w)$
be the (closed) trapezoid in Figure~\ref{fig:lem:extension} of height $w$ and bases orthogonal to $u$. Note that $\partial_u \L\subset \ell_u$.
\begin{enumerate}[label=(\alph*),ref=(\alph*)]
\item\label{case:a} Let $Z \subset \bbZ^2\setminus \bbH_u$ be a set of $\a(u)$ sites at distance at most $\sqrt{w}$ from the origin such that $[\bbH_u \cup Z]_\cU \cap \ell_u$ is infinite. Then there exist finitely many lattice points $a_1,\dots,a_m, b,$ on the line $\ell_u$ such that the following holds. If $\L\setminus \partial_u \L$ \emph{and} $\bigcup_{j=1}^m (Z+a_j+k_jb)$ are infected, where $k_1,\dots, k_m \in \bbZ$ are such that $\{a_1 + k_1 b, a_2+k_2 b,\dots, a_m+k_m b\}$ form $m$ distinct lattice sites of $\partial_u \L$ at distance at least $w$ from the endpoints of $\partial_u \L,$   then the $\cU$-bootstrap percolation restricted to the larger trapezoid $\overline \L= \overline \bbH_{u_{i-1}}(r)\cap\overline\bbH_{u_i}(w/2)\cap\overline\bbH_{u_{i+1}}(r)\cap\overline\bbH_{u_{i+2k}}(w)
$ is able to infect $\partial_u\L$.
\item\label{item:w:consecutive} If $\L\setminus \partial_u \L$ and $w$ consecutive lattice sites in $\partial_u \L$ are infected, then the $\cU$-bootstrap percolation restricted to $\L$ is able to infect $\partial_u\L$.
\end{enumerate}
\end{lem}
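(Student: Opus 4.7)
The plan is to derive both parts from the periodic structure of the $\cU$-closure of a stable half-plane seeded with a finite helping set, following the approach of~\cite{Bollobas14}*{Lemma~3.4} and~\cite{Bollobas15}*{Lemma~5.2}.

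For part~\ref{case:a}, the first step is to exploit translation invariance of the bootstrap along $\ell_u$: for every $a\in\ell_u\cap\bbZ^2$ one has $[\bbH_u\cup (Z+a)]_\cU=a+[\bbH_u\cup Z]_\cU$, so the infected subset of $\ell_u$ produced by the translate $Z+a$ is the corresponding translate of $D:=[\bbH_u\cup Z]_\cU\cap\ell_u$. Since $D$ is an infinite subset of the one-dimensional lattice $\ell_u\cap\bbZ^2$ and $\cU$ has finite range, a pigeonhole argument produces a primitive period vector $b\in\ell_u\cap\bbZ^2$ together with a one-sided arithmetic progression $\{y+nb:n\ge 0\}\subset D$. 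Taking $m$ equal to the index of $b\bbZ$ in $\ell_u\cap\bbZ^2$ and choosing offsets $a_1,\dots,a_m$ so that $\bigcup_{j=1}^m (a_j+b\bbZ)=\ell_u\cap\bbZ^2$ ensures that each residue class modulo $b$ is covered by a one-sided progression $a_j+y+b\bbN$. I would then verify that placing the $m$ translates $Z+a_j+k_jb$ at the prescribed anchors (with $k_j$ chosen so that $a_j+k_jb$ falls inside $\partial_u\L$ at distance at least $w$ from its endpoints), together with $\L\setminus\partial_u\L$ playing the role of $\bbH_u$, is enough for the $\cU$-bootstrap restricted to $\overline{\L}$ to infect all of $\partial_u\L$. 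The enlargement of $\L$ to $\overline{\L}$ by $w/2$ in the $u$-direction accommodates the transient infections of $[\bbH_u\cup Z]_\cU$ that temporarily live above $\ell_u$, while the conditions $\diam(Z)\le\sqrt w$, $r\ge w^2$ and the $w$-margin from the endpoints ensure that the finite-range closure around each translate does not leak through the slanted sides of $\overline{\L}$.

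For part~\ref{item:w:consecutive} I would reduce to a variant of part~\ref{case:a}. The long seed of $w$ consecutive infected sites on $\partial_u\L$, together with the infected $\L\setminus\partial_u\L$, is strictly stronger than a single helping pattern $Z$ projected onto $\ell_u$: since $w$ is taken much larger than $\diam(\cU)$ and $\alpha(u)$, any window of $\partial_u\L$ adjacent to the currently infected segment contains all translates from part~\ref{case:a} needed to extend the infection by one further step. I would iterate this one-step extension $O(|\partial_u\L|)$ many times until the whole of $\partial_u\L$ is infected. At each step the intermediate helping configurations can be arranged to lie entirely on $\ell_u$, since the sites of $Z$ above $\ell_u$ used in part~\ref{case:a} are now replaced by already-infected sites on $\partial_u\L$ that are available thanks to the length of the seed; consequently the propagation stays inside $\L$ without the enlargement to $\overline{\L}$ needed in part~\ref{case:a}.

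The main technical obstacle will be precisely this confinement to a finite domain: one must ensure that every infection predicted by the infinite-volume closure $[\bbH_u\cup Z]_\cU$ is genuinely realised by the $\cU$-bootstrap restricted to $\overline{\L}$ (respectively $\L$), without leaking through the lateral slanted boundary of the trapezoid. The quantitative constraints $r\ge w^2$, $\diam(Z)\le\sqrt w$ and the $w$-margin from the endpoints of $\partial_u\L$ are tailored precisely so that the finite range over which the closure propagates around each copy of $Z$ remains safely inside the trapezoid, thereby overcoming this obstacle.
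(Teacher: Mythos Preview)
The paper does not give a proof of this lemma; it defers entirely to \cite{Bollobas14}*{Lemma~3.4} and \cite{Bollobas15}*{Lemma~5.2}. Your sketch for part~\ref{case:a} follows the standard periodicity argument from those references and is correct up to the point where the $m$ translates of $Z$ produce one-sided arithmetic progressions covering every residue class modulo $b$. However, this only infects the portion of $\partial_u\L$ on the $+b$ side of the anchors; you never explain how to reach the $-b$ side, and since the lemma must hold for \emph{every} choice of the $k_j$, you cannot simply place the anchors near one endpoint. This gap is filled precisely by the mechanism of part~\ref{item:w:consecutive}, so the logical order is to prove~(b) first and then invoke it inside~(a) once a long consecutive run on $\ell_u$ has been created.

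Your argument for part~\ref{item:w:consecutive} has a genuine gap. You try to reduce to part~\ref{case:a} by claiming that $w$ consecutive infected sites on $\ell_u$ act as a helping set $Z$ ``projected onto $\ell_u$'', and that ``the sites of $Z$ above $\ell_u$\dots are now replaced by already-infected sites on $\partial_u\L$''. This is unjustified: the definition of difficulty only guarantees a set $Z\subset\bbZ^2\setminus\bbH_u$, not one contained in $\ell_u$, and for a stable $u$ there is no reason a finite subset of $\ell_u$ together with $\bbH_u$ should generate any further infection at all. The correct argument does not go through helping sets; it uses the defining property of the quasi-stable set $\hS$ recalled in Section~\ref{sec:stable-dir}: for consecutive directions $u_{i-1},u_i,u_{i+1}\in\hS$ there exist rules $X^\pm\in\cU$ with $X^\pm\subset\overline\bbH_{u_i}\cap\overline\bbH_{u_{i\pm1}}$. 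For a site $x\in\ell_{u_i}$ immediately beyond one end of the infected segment, $x+X^+$ (respectively $x+X^-$) consists only of sites that are either strictly below $\ell_{u_i}$, hence in the infected region $\L\setminus\partial_u\L$, or on $\ell_{u_i}$ inside the already-infected segment (since $\diam(\cU)\ll w$). Iterating this one-step extension in both directions infects all of $\partial_u\L$ while staying inside $\L$, which is exactly why no enlargement to $\overline\L$ is needed in~(b).
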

\begin{defn}[$u$-helping sets]
\label{defn:helping set}Let $i\in[-k+1,k-1]$. Any collection of lattice sites of the form $\{a_1 + k_1 b, a_2+k_2 b,\dots, a_m+k_m b\}$ satisfying the assumption in \ref{case:a} above will be referred to as \emph{$u_i$-helping set} for $\partial_{u_i}\L$ or simply $u_i$-helping set.
\end{defn}

\subsection{Some KCM tools}
\label{sec:tools}
For reader's convenience we next collect some general tools from KCM theory that will be applied several times throughout the proof of the main result.
\subsubsection{Notation}
For every statement $\cP$ define $\1_{\{\cP\}}=1$ if $\cP$ holds and $\1_{\{\cP\}}=0$ otherwise. For any subset $\L$ of $\bbR^2$ we write $(\O_\L,\mu_\L)$ for the product probability space $(\{0,1\}^{\L\cap\bbZ^2},\bigotimes_{x\in {\L\cap\bbZ^2}} \mu_q)$. If $\L=\bbR^2$, we simply write $(\O,\mu)$. Given $f:\O_\L\to \bbR$ we shall write $\mu_\L(f)$ and $\var_\L(f)$ for the mean and variance of $f$ w.r.t.\ $\mu_\L$ respectively whenever they exist. For any $\o\in \O$ and $\L\subset \bbR^2$ we write $\o_\L$ for the collection $(\o_x)_{x\in \L\cap\bbZ^2}$. Given a function $f:\O\to \bbR$ depending on finitely many variables we write 
\begin{equation}
  \label{eq:Dirichlet}
  \cD(f)=\sum_{x\in \bbZ^2}\mu(c_x \var_x(f)),
\end{equation}
for the KCM \emph{Dirichlet form} of $f$, where $c_x(\o)$ is the indicator of the event $\{\exists X\in \cU: \forall y\in X,\o_{x+y}=0\}$ and $\var_x(f):=\var_{\{x\}}(f)$ denotes the conditional variance $\var(f\tc (\o_z)_{z\neq x})$. Finally, we shall write $\bbP_\mu(\cdot)$ for the law of the $\cU$-KCM process on $\bbZ^2$ with initial law $\mu$ and $\bbE_\mu(\cdot)$ for the expectation w.r.t.\ $\bbP_\mu(\cdot)$.

\subsubsection{Poincar\'e inequalities}

We begin with a well-known general fact on product measures which we state here in ready-to-use form.
\begin{lem}
\label{lem:var:conv}
Let $\L_i$, $i\in\{1,2,3\}$ be three disjoint finite subsets of $\bbZ^2$ and $\nu_i$ be a probability measures on $\O_{\L_i}$. Let $\nu$ be the product measure $\bigotimes_{i=1}^3\nu_i$ on $\bigotimes_{i=1}^3\O_{\L_i}$. Then for any function $f$ we have
\[\nu_1(\var_{\nu_2\otimes\nu_3}(f))\le \var_\nu(f)\le \nu_1(\var_{\nu_2\otimes \nu_3}(f))+\nu_2(\var_{\nu_1\otimes\nu_3}(f)).\]
\end{lem}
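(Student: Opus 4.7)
The left inequality is immediate from the law of total variance. Writing $\nu=\nu_1\otimes(\nu_2\otimes\nu_3)$ and conditioning on $\o_{\L_1}$ gives
\[
\var_\nu(f)=\nu_1\bigl(\var_{\nu_2\otimes\nu_3}(f)\bigr)+\var_{\nu_1}\bigl((\nu_2\otimes\nu_3)(f)\bigr),
\]
where $\var_{\nu_2\otimes\nu_3}(f)$ is the function of $\o_{\L_1}$ obtained by freezing it and computing the variance of $f$ in the remaining coordinates. The second summand is non-negative, which gives the desired lower bound.

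For the upper bound the plan is to group the factors as $(\nu_1\otimes\nu_3)\otimes\nu_2$ and invoke the standard two-factor tensorization of variance for product measures, namely
\[
\var_{\mu_1\otimes\mu_2}(g)\le \mu_1\bigl(\var_{\mu_2}(g)\bigr)+\mu_2\bigl(\var_{\mu_1}(g)\bigr),
\]
with $\mu_1=\nu_1\otimes\nu_3$, $\mu_2=\nu_2$ and $g=f$. This yields
\[
\var_\nu(f)\le (\nu_1\otimes\nu_3)\bigl(\var_{\nu_2}(f)\bigr)+\nu_2\bigl(\var_{\nu_1\otimes\nu_3}(f)\bigr).
\]
The second summand is already in the desired form. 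To absorb the first into $\nu_1\bigl(\var_{\nu_2\otimes\nu_3}(f)\bigr)$, apply the total variance decomposition a second time to the pair $\nu_2\otimes\nu_3$, conditioning on $\o_{\L_3}$:
\[
\var_{\nu_2\otimes\nu_3}(f)=\nu_3\bigl(\var_{\nu_2}(f)\bigr)+\var_{\nu_3}\bigl(\nu_2(f)\bigr)\ge \nu_3\bigl(\var_{\nu_2}(f)\bigr).
\]
Integrating this against $\nu_1$ gives $(\nu_1\otimes\nu_3)(\var_{\nu_2}(f))\le \nu_1(\var_{\nu_2\otimes\nu_3}(f))$, which substituted back completes the proof.

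The only non-immediate ingredient is the two-factor tensorization above, and this is classical and can simply be cited; if one wishes to include it, the short proof is the orthogonal decomposition $f-\mu_1\mu_2(f)=(f-\mu_1 f)+(\mu_1 f-\mu_1\mu_2 f)$ in $L^2(\mu_1\otimes\mu_2)$ followed by Jensen. No genuine obstacle is expected, since the lemma is a routine bookkeeping tool used repeatedly later when decomposing the variance on a snail into contributions from its base and successive trapezoids.
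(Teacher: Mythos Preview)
Your proof is correct and uses essentially the same ingredients as the paper (total variance plus Jensen), but the paper is slightly more direct. After writing
\[
\var_\nu(f)=\nu_1\bigl(\var_{\nu_2\otimes\nu_3}(f)\bigr)+\var_{\nu_1}\bigl((\nu_2\otimes\nu_3)(f)\bigr),
\]
the paper bounds the second term in one stroke via Jensen:
\[
\var_{\nu_1}\bigl((\nu_2\otimes\nu_3)(f)\bigr)=\nu_1\bigl((\nu_2\otimes\nu_3(f-\nu(f)))^2\bigr)\le \nu\bigl((f-\nu_1\otimes\nu_3(f))^2\bigr)=\nu_2\bigl(\var_{\nu_1\otimes\nu_3}(f)\bigr).
\]
You instead invoke the two-factor tensorization (which hides this same Jensen step) and then need an extra application of total variance to absorb $(\nu_1\otimes\nu_3)(\var_{\nu_2}(f))$ back into $\nu_1(\var_{\nu_2\otimes\nu_3}(f))$. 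Both routes are fine; the paper's saves one step.
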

\begin{proof}[Proof of Lemma \ref{lem:var:conv}]
The first inequality follows from the total variance formula
\[\var_\nu(f)= \nu_{1}(\var_{\nu_2\otimes\nu_3}(f))+\var_{\nu_1}(\nu_2\otimes\nu_3(f)).\]
For the second inequality we observe that 
\begin{align*}
\var_{\nu_1}(\nu_2\otimes\nu_3(f))&=\nu_{1}((\nu_2\otimes\nu_3(f-\nu(f))^2)\\
&\le \nu((f-\nu_1\otimes\nu_3(f))^2)=\nu_2(\var_{\nu_1\otimes\nu_3}(f))
\end{align*}
by Jensen's inequality.
\end{proof}
In order to understand the general framework for the last two results, we begin by  recalling a standard Poincar\'e inequality for $n$ independent random variables $X_1,\dots,X_n$ (for simplicity each one taking finitely many values). For any $f=f(X_1, \dots, X_n)$ 
\[
\var(f)\le \sum_i \bbE(\var_i(f)),
\]
where $\var_{i}(f)$ is the conditional variance computed w.r.t.\ the variable $X_i$ given all the other variables. 
The sum in the r.h.s.\ above can be interpreted as the Dirichlet form of the continuous time \emph{Gibbs sampler}, reversible w.r.t.\ the product law of $(X_i)_i$, which with rate $n$ chooses a random index $i\in [n]$ and resamples $X_i$ w.r.t.\ its marginal. From this perspective, the above inequality tells us that the relaxation time (see e.g.\ \cite{Levin09}) of the Gibbs sampler is bounded from above by 1.

Now consider $n$ events $(\cH_i)_{i=1}^n$, in the sequel \emph{facilitating events}, and suppose that each $\cH_i$ depends only on the variables $(X_j)_{j\neq i}$. An example of a \emph{constrained} Poincar\'e inequality with facilitating events $(\cH_i)_{i=1}^n$ is the inequality \begin{equation}
\label{eq:framework1}
\var(f\tc \O_\cH)\le C \sum_{i}\frac{\bbE(\1_{\cH_i}\var_{i}(f))}{\bbP(\O_\cH)},    
\end{equation}
where $\O_\cH=\bigcup_{i=1}^n \cH_i$ and $C\in [1,+\infty].$ Notice that the sum in the r.h.s.\ above can be interpreted as the Dirichlet form of the continuous time \emph{constrained} Gibbs sampler on $\O_\cH$, which with rate $n$ chooses a random index $i\in [n]$ and resamples $X_i$ w.r.t.\ its marginal iff $\cH_i$ holds. If the facilitating events are such that the constrained Gibbs sampler on $\O_\cH$ is ergodic then $C<+\infty$. 

Each one of the two results we are about to discuss next is just a special instance of the above general problem.

\begin{lem}
\label{lem:2block}
Let $X_1,X_2$ be two independent random variable taking values in two finite sets $\bbX_1,\bbX_2$.  Let also $\cH\subset \bbX_{1}$ with $\bbP(X_1\in \cH)>0$. Then for any function $f(X_1,X_2)$ 
\[
\var(f)\le  2\bbP(X_1\in \cH)^{-1}\bbE\left(\var_{1}(f)
+\1_{\{X_1\in \cH \}}
\var_{2}(f)\right). 
\]
\end{lem}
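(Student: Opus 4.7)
The plan is to reduce the inequality to a statement about the variance of the conditional mean $F(X_2)\coloneqq \bbE[f\tc X_2]$, via the law of total variance, and then to bound $\var(F)$ by a telescoping argument through the auxiliary function
\[
g(X_2)\coloneqq \bbE[f\tc X_2,X_1\in\cH]=\bbE_{X_1\tc \cH}[f(X_1,X_2)].
\]
Since $X_1$ and $X_2$ are independent, one has $\var(f)=\bbE[\var_1(f)]+\var(F)$, and the first summand is already of the desired form, so the whole task is to bound $\var(F)$ by a multiple of $p^{-1}\bbE[\var_1(f)+\1_{\{X_1\in\cH\}}\var_2(f)]$, where $p=\bbP(X_1\in\cH)$.

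For $\var(g)$, I would invoke the convexity of the variance functional: by Jensen's inequality applied to $X_1$ (conditioned on $\cH$) against the variance with respect to $X_2$,
\[
\var_{X_2}\bigl(\bbE_{X_1\tc \cH}[f(X_1,X_2)]\bigr)\le \bbE_{X_1\tc \cH}\bigl[\var_{X_2}(f(X_1,X_2))\bigr]=\frac{1}{p}\bbE\bigl[\1_{\{X_1\in\cH\}}\var_2(f)\bigr].
\]
For $\bbE[(F-g)^2]$, note that both $F$ and $g$ are averages of $f(\cdot,X_2)$ against laws of $X_1$ (unconditional and conditioned on $\cH$, respectively), so introducing two independent copies $X_1,X_1'$ of $X_1$, with $X_1'$ conditioned on $\cH$, gives
\[
F(X_2)-g(X_2)=\bbE_{X_1,X_1'\tc \cH}\bigl[f(X_1,X_2)-f(X_1',X_2)\bigr];
\]
Jensen on the outer expectation and the Radon--Nikodym bound $\mu_1(\cdot\tc \cH)\le p^{-1}\mu_1(\cdot)$ yield
\[
\bbE_{X_2}[(F-g)^2]\le \frac{1}{p}\,\bbE_{X_2}\bbE_{X_1,X_1'}\bigl[(f(X_1,X_2)-f(X_1',X_2))^2\bigr]=\frac{2}{p}\,\bbE[\var_1(f)].
\]

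Finally I would combine the two bounds. Since the common mean $\bbE[F]=\bbE[f]$ differs from $\bbE[g]=\bbE[f\tc X_1\in\cH]$ only through the averaging in $X_1$, a clean way is to write
\[
\var(F)\le \bbE\bigl[(F-g)^2\bigr]+\var(g)+2\sqrt{\bbE[(F-g)^2]\cdot \var(g)}
\]
or, more robustly, to apply the elementary inequality $(a+b)^2\le 2a^2+2b^2$ directly to $F-\bbE[F]=(F-g)+(g-\bbE[g])+(\bbE[g]-\bbE[F])$ after bounding the last scalar term by the same quantities. Summing the two bounds against the universal constants coming from these Cauchy--Schwarz / triangle steps and adding the contribution $\bbE[\var_1(f)]\le p^{-1}\bbE[\var_1(f)]$ from the total variance decomposition yields the stated inequality, up to a numerical constant.

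The main difficulty is getting the stated constant $2$ rather than the loose constant (something like $5$) that the naive two-step triangle argument produces. I expect the tight constant requires a slightly more refined arrangement, for instance by directly applying the variational characterisation $\var(f)=\min_{c\in\bbR}\bbE[(f-c)^2]$ with $c=\bbE[f\tc X_1\in\cH]$ and splitting according to $\{X_1\in\cH\}$ and its complement, bounding the $\{X_1\in\cH\}$-part by the standard product Poincar\'e inequality for the conditioned measure (which is still a product because $\cH\subset\bbX_1$) and the complementary part by the two-term triangle decomposition above. This is essentially an instance of the general canonical-paths argument for the constrained Gibbs sampler on $\bbX_1\times\bbX_2$ which is ergodic precisely because $p>0$.
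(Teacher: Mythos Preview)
Your approach is genuinely different from the paper's. The paper does not argue directly at all: it simply invokes \cite{Cancrini08}*{Proof of Proposition 4.4}, which yields the sharper Poincar\'e constant $\dfrac{1}{1-\sqrt{1-p}}$ for the constrained two--block Gibbs sampler, and then uses the elementary bound $1-\sqrt{1-p}\ge p/2$ to get $2/p$. In other words the constant $2$ is inherited, not produced by a tailored argument.

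Your direct route via the total variance decomposition and the auxiliary $g=\bbE[f\tc X_2,X_1\in\cH]$ is sound. The two intermediate estimates
\[
\var(g)\le p^{-1}\bbE\bigl[\1_{\{X_1\in\cH\}}\var_2(f)\bigr],\qquad
\bbE[(F-g)^2]\le 2p^{-1}\bbE[\var_1(f)]
\]
are correct as written. Combining them through $\var(F)\le\bbE[(F-\bbE g)^2]\le 2\bbE[(F-g)^2]+2\var(g)$ and adding the $\bbE[\var_1(f)]$ term indeed gives a constant of order $5$, as you note. Your sketched refinement (variational choice $c=\bbE[f\tc X_1\in\cH]$ and splitting on $\{X_1\in\cH\}$) is not carried through, and it is not clear it recovers the constant $2$ either: on $\{X_1\in\cH\}$ the product Poincar\'e inequality for the conditioned measure produces $\var_1^{\cH}(f)$ rather than the unconditional $\var_1(f)$, and passing back loses a factor. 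So as it stands your argument proves the lemma with $2$ replaced by an absolute constant, which is perfectly adequate for every application in the paper (the factor $2/p$ is always absorbed into a $q^{-O(w^4)}$), but does not establish the statement as written.
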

\begin{rem}
The above inequality coincides with \eqref{eq:framework1} with $\cH_1=\bbX_2$, $\cH_2= \{X_1\in\cH\}$ and $C= 2/\bbP(X_1\in \cH)$. Clearly the constrained Gibbs sampler is irreducible because $\bbP_1(X_1\in \cH)>0$.
\end{rem}
\begin{proof}[Proof of Lemma \ref{lem:2block}]
  It follows from \cite{Cancrini08}*{Proof of Proposition 4.4} that
  \begin{align}
\var(f)&\le  \frac{1}{1-\sqrt{1-\bbP(X_1\in\cH)}}\bbE\left(\var_{1}(f)
+\1_{\{X_1\in \cH \}}
             \var_{2}(f)\right)\nonumber\\
    &\le
2\bbP(X_1\in \cH)^{-1}\bbE\left(\var_{1}(f)
+\1_{\{X_1\in \cH \}}
\var_{2}(f)\right).\tag*{\qedhere}
   \end{align}
\end{proof}
The second result concerns a generalisation of the standard (finite volume) constrained Poincar\'e inequality for the $1$-neighbour KCM process, or FA1f KCM, \cite{Cancrini08}.

Let $(\widehat S,\widehat \nu)$ be a finite probability space with $\widehat \nu$ a positive probability measure, let $\O_n=\widehat S^{[n]}$ and $\nu=\bigotimes_{i\in[n]}\nu_i$, where $\nu_i=\widehat\nu$ for
all $i\in [n]$. Elements of $\O_n$ are denoted $\o=(\o_{0},\dots,\o_{n-1})$
with $\o_i\in \widehat S$. Fix a single site event
$\cH\subset \widehat S$ and a positive integer $\k<n$. Then, according
to whether we view the set $[n]$ as the $n$-cycle or not, we define
the \emph{facilitating event} $\cH_i$ as
follows. If $[n]$ is the $n$-cycle
\[
\cH_i=
\bigcap_{j=i+1}^{i+\k}\{\o_j\in \cH\}\cup\bigcap_{j=i-1}^{i-\k}\{\o_j\in \cH\}.
\]
If instead $[n]$ is linear
\[
\cH_i=\begin{cases}
\bigcap_{j=i-1}^{i-\k}\{\o_j\in \cH\}  & \text{if $i+\k\ge n$}\\
\bigcap_{j=i+1}^{i+\k}\{\o_j\in \cH\} & \text{if $i-\k< 0$}\\
\bigcap_{j=i+1}^{i+\k}\{\o_j\in \cH\}\cup \bigcap_{j=i-1}^{i-\k}\{\o_j\in \cH\} & \text{otherwise.}
\end{cases}
\]
In words, in the periodic case $\cH_i$ requires the $\k$ variables immediately after or before $i$ (in the clockwise order) to be in a state belonging to $\cH$, while in the linear case the same requirement holds when $i$ is farther than $\k$ from the boundary points of $[n]$. When $i$ is closer than $\k$ to e.g.\ the the right boundary of $[n]$, then $\cH_i$ requires the $\k$ variables immediately before $i$ to be in states belonging to $\cH$. The case when $\widehat S=\{0,1\}$, $\widehat \nu$ is the Bernoulli$(1-q)$-measure,  $\cH=\{0\}$ and $\k=1$ is the usual $1$-neighbour KCM setting.

\begin{lem}
  \label{lem:FAperiodic}
Assume that $(1-\widehat \nu(\cH)^k)^{n/(3\k)}<\frac{1}{16}$ and set $\O_\cH=\bigcup_{i=0}^{n-1} \cH_i$. Then, for all
  $f:\O_n\to \bbR$
\begin{equation}\label{eq:FAeasy}
\var_\nu(f\tc \O_\cH) \le \left(\frac{2}{\widehat\nu(\cH)}\right)^{O(\k)}\sum_{i = 1}^n \nu\left(\1_{\cH_i} \var_{\nu_i}(f) \right).
\end{equation}
\end{lem}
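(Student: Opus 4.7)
The plan is to reduce this constrained Poincar\'e inequality to a standard FA1f-type inequality by coarse-graining the chain into blocks of size $\kappa$. Set $m=\lfloor n/\kappa\rfloor$ and define super-variables $Y_j=(\omega_{j\kappa},\dots,\omega_{(j+1)\kappa-1})\in\widehat S^\kappa$ for $j\in[m]$, so that $(Y_j)_j$ is an i.i.d.\ product of $m$ variables with marginal $\widehat\nu^{\otimes\kappa}$. The crucial observation is that the single-site facilitating event $\cH_i$ is implied by the event that some block adjacent to the block containing $i$ is entirely in $\cH$, which has probability $\widehat\nu(\cH)^\kappa$ per block; conversely, $\Omega_\cH$ is (up to the obvious boundary correction in the linear case) contained in the event $\bigcup_j\widetilde\cH_j$, where $\widetilde\cH_j=\{Y_j\in\cH^\kappa\}$.

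First I would apply the standard one-neighbour FA1f constrained Poincar\'e inequality of \cite{Cancrini08} to the block chain $(Y_j)_{j\in[m]}$ with good event $\widetilde\cH_j$. The quantitative hypothesis $(1-\widehat\nu(\cH)^\kappa)^{n/(3\kappa)}<1/16$ is precisely (modulo universal factors) what is needed to guarantee that the block-good set is sufficiently likely for the FA1f Poincar\'e constant to be $O(1/\widehat\nu(\cH)^\kappa)$ uniformly in $m$. This step yields an inequality of the form
\[
\var_\nu(f\mid\Omega_\cH)\le \frac{C_0}{\widehat\nu(\cH)^\kappa}\sum_{j\in[m]}\nu\bigl(\mathbf 1_{\widetilde\cH_{j\pm 1}}\var_{Y_j}(f)\bigr),
\]
where $\widetilde\cH_{j\pm1}=\widetilde\cH_{j-1}\cup\widetilde\cH_{j+1}$ (with the appropriate one-sided variant near the boundary in the linear case).

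Next I would unpack each block variance $\var_{Y_j}(f)$ into single-site variances carrying the correct constraints $\mathbf 1_{\cH_i}$. Suppose $\widetilde\cH_{j-1}$ holds, i.e.\ the $\kappa$ sites immediately to the left of the first site of block $j$ are all in $\cH$. Then $\cH_{j\kappa}$ is satisfied from the left, so Lemma~\ref{lem:2block} applied to the splitting (site $j\kappa$)\,|\,(rest of block $j$) with facilitating event $\widetilde\cH_{j-1}$ bounds $\var_{Y_j}(f)$ by $2\widehat\nu(\cH)^{-1}$ times an unconstrained variance on site $j\kappa$ plus a piece where $\omega_{j\kappa}$ is conditioned to lie in $\cH$. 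In the latter piece, the $\kappa$ sites to the left of $j\kappa+1$ (namely the last $\kappa-1$ sites of block $j-1$ together with $\omega_{j\kappa}\in\cH$) are all in $\cH$, so $\cH_{j\kappa+1}$ is satisfied, and Lemma~\ref{lem:2block} can be applied again. Iterating this cascade $\kappa$ times along block $j$ produces a factor of $(2/\widehat\nu(\cH))^{O(\kappa)}$ and expresses everything in terms of $\sum_{i\in\text{block }j}\nu(\mathbf 1_{\cH_i}\var_{\nu_i}(f))$. Summing over $j$ and combining with the first step gives the claimed bound.

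The main obstacle will be executing the cascade cleanly: at each step the facilitating event for the next site depends on the previously resampled site lying in $\cH$, so one must keep track of how the constraint $\mathbf 1_{\widetilde\cH_{j\pm 1}}$ degrades into $\mathbf 1_{\cH_i}$ and ensure the accumulated constants collapse into $(2/\widehat\nu(\cH))^{O(\kappa)}$. Periodic versus linear boundary conditions require only cosmetic adjustments: in the linear case the boundary blocks have only a one-sided facilitating event, but the same cascade works with the inequality $\mathbf 1_{\Omega_\cH}\le\sum_j\mathbf 1_{\widetilde\cH_j}$ still valid up to a constant, and the condition on $(1-\widehat\nu(\cH)^\kappa)^{n/(3\kappa)}$ is insensitive to such boundary effects.
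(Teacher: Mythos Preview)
Your overall architecture---coarse-grain into $\kappa$-blocks, apply an FA1f-type bound at the block level, then cascade each block variance into site variances via iterated Lemma~\ref{lem:2block}---is exactly the paper's approach, and the cascade step matches the paper's proof essentially verbatim (though note that in each application of Lemma~\ref{lem:2block} the facilitating event must be $\{\omega_{j\kappa}\in\cH\}$, an event on $X_1$, not $\widetilde\cH_{j-1}$; your description of the output is correct but the labelling of the input is garbled).

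There is, however, a genuine gap in the first step. Your claim that $\Omega_\cH\subset\bigcup_j\widetilde\cH_j$ is false: the containment goes the other way. A run of $\kappa$ consecutive $\cH$-sites can straddle a block boundary, so $\Omega_\cH$ strictly contains $\bigcup_j\widetilde\cH_j$, and this is not a boundary effect. Consequently, the block-level FA1f process is ergodic only on $\bigcup_j\widetilde\cH_j$, and the Poincar\'e inequality you would get from \cite{Cancrini08} bounds $\var_\nu(f\mid\bigcup_j\widetilde\cH_j)$, not $\var_\nu(f\mid\Omega_\cH)$. These are not comparable in general: take $f=\1_{\Omega_\cH\setminus\bigcup_j\widetilde\cH_j}$ to see that the latter can be positive while the former vanishes. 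The hypothesis on $(1-\widehat\nu(\cH)^\kappa)^{n/(3\kappa)}$ does make both conditioning events likely, but that alone does not let you pass from one conditional variance to the other.

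The paper circumvents this precisely by \emph{not} invoking a block FA1f directly on the conditioned measure. Instead it first applies a two-halves splitting lemma (\cite{Blondel13}*{Lemma~6.5}) to $\var_\nu(f\mid\Omega_\cH)$, which requires only that a good block exists in each half with probability $>15/16$; it then conditions on the position $\xi$ of the leftmost good block in one half and removes the $\Omega_\cH$-conditioning at cost $\widehat\nu(\cH)^{-\kappa}$ before applying the generalised FA1f bound \cite{Martinelli19a}*{Proposition~3.4} to the \emph{unconditioned} variance of the variables to the right of $I_\xi$. This leftmost-good-block device is the missing ingredient in your argument: it is what converts the awkward conditioning on $\Omega_\cH$ into an honest FA1f setup on an unconditioned product measure.
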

The proof is left to the \hyperref[app]{Appendix}.
\begin{rem}We will apply the lemma with $(\widehat S,\widehat \nu)$ equal to the probability space given by $\{0,1\}^m$ equipped with the $\text{Bernoulli}(1-q)$ product measure conditioned on some event whose probability tends to one as $q\to 0$. The integers $1\ll m\ll n$ may diverge to infinity as $q\to 0$ while the integer $\k$ will be large but independent of $q$.
\end{rem}

\section{The core of the proof}
\label{sec:core}
In this section we prove a Poincar\'e inequality which will
represent the key step in the proof of Theorem
\ref{th:main}.

\subsection{Roadmap}
\label{subsec:roadmap}
Before we dive into the technical details, let us give a hands-on roadmap of the argument. Although it is underlied by the dynamical intuition explained in Section \ref{sec:sketch}, the latter is not very transparent in the Poincar\'e language of the formal proof. 

The goal of this section is to prove Theorem \ref{thm:key step}. It says that the $\cU$-KCM ($\cU$ being a fixed critical update family with a finite number of stable directions) on a snail $V=V_L^R(\ur)$ (recall Definition \ref{def:snail} and Figure \ref{fig:snails}), conditioned on a well-chosen \emph{super good} event $\SG(V)$ is able to relax in a time $\exp(\log^3(1/q)/q^\a)$, which is the dominating contribution leading to \eqref{eq:1.4}. For the purposes of the roadmap the reader should think of the snail as having dimensions $R=w^2\log(1/q)/q^\a$, $L=q^{-3w}$ and $r_{i}=\d^{i-(2k-1)}q^{-2w}, i\in[2k]$ for some small positive $\d$. Let us explain the Definition \ref{def:good events} of $\SG(V)$ before outlining the proof of Theorem \ref{thm:key step}. 

\subsubsection{Good and super good events}
\label{sec:good-supergood}
The super good event $\SG(V)$ will decompose as a product w.r.t.\ the partition of $V$ into its annulus $A$, half-annulus $HA+Lu_0$, annulus interior $\Aint$, truncated base $\oB$ and trapezoids $T^\pm_i$ from Section \ref{sec:geosetup}. On $A$ ($HA$) we require the event $\cA$ ($\HA$) that $A$ ($HA$) is fully infected. These are the only \emph{unlikely} events involved in $\SG(V)$ and we will denote by $\SG$ only events requiring the occurrence of (spatial translates of) $\cA$ and $\HA$. Events of type $\SG$ will all have very small probability $\m(\SG)$ of the order of $\exp(-\log^2(1/q)/q^\a)$. 

We will use instead write $\cG$ to denote good events, which are \emph{likely} and only involve the presence of appropriate helping sets as in Definition \ref{defn:helping set} or sets of $w$ consecutive infections as in Lemma \ref{lem:strip}\ref{item:w:consecutive}. Recall the decomposition of $\oB$ into slices $SB_j$ from Definition \ref{def:snail2}. We say that the event $\SB_j$ occurs if each side of $SB_j$ (which consists of at most one segment in each direction) has a helping set for the corresponding direction. We then define $\cG(\oB)=\bigcap_j\SB_j$ and it is not hard to see that this way the occurrence of $\SG(B)=\cA\cap\HA\cap\cG(\oB)$ implies that the infections in $B$ are sufficient to fully infect $B$.

Notice that in general the event $\SB_j$ depends on the values of $\o$ in the set $\bigcup_{i=0}^k SB_{j+i}$ for some $k\ge0$ depending only on $\cU$. In order to avoid this (annoying) technical detail we will use the following simplifying Assumption \ref{ass:1} implying that $k=0.$ 
\begin{ass}
\label{ass:1}
For every stable direction $u\in \widehat\cS_0$ there exists a subset $Z_u$ of the line $\ell_u$ of cardinality $\a$ such that $[\bbH_u\cup Z_u]_\cU\cap\ell_u$ has infinite cardinality.   
\end{ass}
This is by no means restrictive, as the proof applies directly without this assumption up to changing $\SB_{i,j}$ in Definition~\ref{def:good events}, following \cite{Martinelli19a}*{Sec. 7}. We will spare the reader the tedious details, as they already appeared previously in the above-mentioned paper. This assumption is only relevant for treating the base $B$, for which we will import the result from \cite{Martinelli19a}, where the assumption was introduced.

Having defined the good event for the base $B$, we now define the good event for the trapezoids of the snail $V$. Let $\ST^\pm_{i,j}$ be the event that the slice $ST^\pm_{i,j}$ in the decomposition of $T^\pm_i$ from Definition \ref{def:snail2} contains a set of $w$ consecutive infected sites. We then define $\cG(T^\pm_i)=\bigcap_j\ST^\pm_{i,j}$. Again, by Lemma \ref{lem:strip} it is not hard to see that if $B$ and $T^+_{i'}$ for $i'<i$ are fully infected and $\cG(T^+_i)$ occurs, then the $\cU$-bootstrap percolation can also infect $T^+_i$ (and similarly for $T_i^-$). 

Finally, the super good event $\SG(V)$ is defined as $\SG(B)\cap\bigcap_{i}(\cG(T^+_i)\cap\cG(T^-_i))$ and it clearly implies that the entire snail $V$ can be infected from within.

\subsubsection{Structure of the proof}
The fact that $V$ can be fully infected on $\SG(V)$ is reassuring and implies that the relaxation time we are after in Theorem \ref{thm:key step} is finite, but we need an efficient relaxation mechanism to prove the  theorem. It is not hard to see that it suffices to treat the right-snail $V^+$, so we concentrate on it and drop all $+$ superscripts. 
In the sequel, whenever we refer to the \emph{relaxation in a given region $\L$} mathematically this will translate into proving a Poincar\'e inequality like the one in \eqref{eq:3} with $V$ replaced by $\L$. 

The proof proceeds by proving an efficient relaxation in progressively larger and larger volumes always conditioned on a corresponding $\SG$ event. In the process we will often rely on auxiliary constrained block dynamics of several types like those in Section \ref{sec:tools}. These auxiliary dynamics allow us to relate the relaxation in a given region to the relaxation in smaller sub-regions, each subregion having an additional convenient constraint on the configuration outside it. The auxiliary dynamics we will use are of FA1f type (like the one in Lemma \ref{lem:FAperiodic}) or two-blocks type (like the one in Lemma \ref{lem:2block}). By performing such reductions, we reduce the problem of proving an efficient relaxation on a large region to a similar problem on suitable smaller regions. The base case of the above inductive procedure is then treated directly. We now describe the various steps of the above iterative reduction.

\paragraph{The base case: the annulus interior $\Aint$}
First, in Lemma \ref{lem:Aint} we treat $\Aint$ on the event $\cA$ that the annulus is fully infected, which serves as a boundary condition. This is fairly easy and can be done in various ways. To give a formal argument, we split $\Aint$ into strips of bounded width (see Figure \ref{fig:Aint}). Fully infected strips perform an FA1f auxiliary dynamics. The boundary condition provides all the sets of $w$ consecutive infections needed for an infected strip to infect its neighbour using Lemma \ref{lem:strip} (see Figure \ref{subfig:w}).

\paragraph{From $\Aint$ to the base $B$}
Up to now we have a Poincar\'e inequality on the annulus and its interior. In Proposition \ref{prop:base case} we extend that to a base $B$. We will not insist on this step, as it was essentially done already in \cite{Martinelli19a}. 
Indeed, using an East-like dynamics in direction $u_0$ the relaxation time of $\oB$ (on $\cG(\oB)$) with infected boundary condition in $A$ was shown to be roughly $\exp(\log^3(1/q)/q^\a)$. Combining this with the result for $\Aint$, we obtain a Poincar\'e inequality for $B$.

\paragraph{Adding the first trapezoid to $B$}
Our next goal is to consider the relaxation in $B\cup T_0$. In turn, this step is split into two distinct parts.

\subparagraph{Adding the first slice $ST_{0,1}$ to $B$} 
This is achieved in Lemma \ref{prop:slice} (see Figure \ref{fig:slice}). Relaxation in $B$ has already been established in the previous step, so we focus on the relaxation in $ST_{0,1}$. In doing this we are allowed our knowledge of the relaxation in $B$. We use the FA1f-like dynamics of Lemma \ref{lem:FAperiodic}, asking for $w$ consecutive infections in $ST_{0,1}$ next to the site to be updated. In other words we have to understand how to efficiently resample a site $x\in ST_{0,1}$ using the $\cU$-KCM dynamics when its neighbouring $w$ sites are infected. Using a two-block dynamics (Lemma \ref{lem:2block}), resampling $B$ roughly $q^{-O(1)}$ times, we may further impose the condition that the site we wish to resample has a fully infected neighbourhood in $B$ in addition to the next $w$ sites in $ST_{0,1}$, which are already infected. This is exactly the situation in Figure \ref{fig:slice} and makes the flip of the site we want to update legal for the original $\cU$-KCM. Thus, this step produces terms of the Dirichlet form of the $\cU$-KCM in \eqref{eq:Dirichlet}, as well as a term $\var_{B}(f\tc\SG(B))$, which we already know how to control.

\subparagraph{Adding more slices to $B$}
In a sense this part embodies the East-like motion of droplets in direction $u_1$ hinted in Section \ref{sec:sketch}. This connection is rather indirect in the sense that the \emph{bisection method} used to analyse the relaxation in the union of $B$ with several slices of the first trapezoid coincides with the bisection method used to efficiently bound from above the relaxation time of the standard East model in \cite{Cancrini08}. 

Consider the problem of the relaxation in a snail consisting of $B$ and $2n$ slices of the first trapezoid. Our aim is to reduce it to the same problem on two similar snails, each one with essentially the same base $B$ but with only $n$ slices. This is achieved in Lemma \ref{prop:bisection}. We start by introducing an auxiliary constrained two-block dynamics in which $B$ and the first $n$ slices form the first block $\widetilde V,$ while the second group of $n$ slices form the second block $\overline T_0$ (see Figure \ref{fig:shift}). The constraint of the two-blocks dynamics is that a translated base $\overline B$ (corresponding to $\overline V_{i-1}$ in Figure \ref{fig:shift}) is super good. The base $\overline B$ is constructed so that together with $\overline T_0$ it forms a snail $\overline V$ with size similar to that of $\widetilde V$. The relaxation to equilibrium on $\widetilde V$ is dealt with by induction on the number of slices, so it remains to analyse the relaxation to equilibrium on $\overline T_0$ under the above constraint. The relaxation time of the auxiliary model is $1/\m(\SG)$ (the number of times one needs to update the first block until the constraint becomes satisfied). Then in order to relax on $\overline T_0$ it suffices to do so on the larger region $\overline V$. We are done since $\overline V$ and $\widetilde V$ are already treated by the induction.

In Corollary \ref{cor:trapezoid}, repeating the above bisection several times, we manage to reproduce the relaxation on a snail with base $B$ and arbitrary number $r_0$ of slices of the first trapezoid. Indeed, starting from the snail with a single slice in $T_0$ provided above, we double its height $\log (1/q)$ times to reach the desired $r_0\simeq \d^{-2k+1}q^{-2w}$. Thus, the Poincar\'e constant of $B$ is multiplied by $1/\m(\SG)^{\log (1/q)}$ in this process.

\paragraph{Adding all trapezoids of the original snail $V$}
Finally, repeating the above steps for each trapezoid, we obtain the desired Poincar\'e constant for the entire snail, concluding Theorem \ref{thm:key step}. 

\subsection{Setup}
Given a snail $V=V_L^R(\ur)$, we shall work in the associated probability space $\O_{V}=\{0,1\}^{V\cap \bbZ^2}$ endowed with the probability measure $\mu_V(\cdot\tc \SG(V))$ conditioned to the simultaneous occurrence of the following events on $\O_{V}$.

\begin{defn}[Good and super good events]
  \label{def:good events}
\leavevmode\begin{itemize}
\item Recalling Definition \ref{def:ring}, we define $\cA$ as the event that $A$ is infected and $\HA$ as the event that $HA+Lu_0$ is infected.
\item Recalling Definitions \ref{def:snail2} and \ref{defn:helping set}, for each $SB_j$ and $u_i\in\widehat\cS_0$, let $\SB_{i,j}$ denote the event that $SB_{i,j}=\varnothing$ or $SB_{i,j}$ contains an infected $u_i$-helping set. Then set $\SB_j=\bigcap_{u_i\in\widehat\cS_0}\SB_{i,j}$.
\item Recalling Definition \ref{def:ti}, for each non-empty $ST^\pm_{i,j}$ let $\ST^\pm_{i,j}$ be the event that there exist $w$ consecutive infected sites in $ST^\pm_{i,j}$.
\end{itemize}
Using the above events, we then define
\begin{align*}
\cG(\oB)&{}=\bigcap_{j>0} \SB_{j},& \SG(B)&{}=\cA\cap\HA\cap \cG(\oB),& \cG(T^\pm_i)&=\bigcap_{j>0} \ST^\pm_{i,j}.
\end{align*}
Finally, we set $\SG(V^+)=\SG(B)\cap \bigcap_{i\in[2k]}\cG(T_i^+)$ and  $\SG(V)=\SG(V^+)\cap \SG(V^-),$
with $\SG(V^-)$ the analog of $\SG(V^+)$ for the left-snail.
\end{defn}
We note that the event $\HA$ is there only to ensure the easy removal of the simplifying  Assumption~\ref{ass:1}.
\begin{rem}
\label{rem:prod}The events above are defined so as to preserve as much as possible the original product structure of $\mu$ in the conditional measure $\mu_{V}(\cdot\tc \SG(V))$. In fact,
\begin{align*}
\mu_{V^\pm}(\cdot\tc \SG(V^\pm))&= \mu_B(\cdot\tc\SG(B))\otimes\left(\bigotimes_{i\in[2k]} \mu_{T^\pm_i}(\cdot \tc \cG(T^\pm_i))\right),\\
\mu_{T^\pm_i}(\cdot \tc \cG(T^\pm_i))&= \bigotimes_{j>0} \mu_{ST^\pm_{i,j}}(\cdot\tc \ST^\pm_{i,j}),\\
\mu_{B}(\cdot\tc \SG(B))&=\mu_{\Aint}\otimes\d_{\o_{A\cup(HA+Lu_0)}=0}\otimes\mu_{\oB}(\cdot\tc \cG(\oB)),\\
\mu_{\oB}(\cdot\tc\cG(\oB))&=\bigotimes_{j>0}\mu_{SB_j}(\cdot\tc\SB_j),
\end{align*}
since trapezoids and the base are pairwise disjoint by construction and likewise for the slices of the trapezoids, the slices of the base, the annulus, its interior and the translated half-annulus.
\end{rem}
Taking into account this product structure, in the next observations we establish that, as claimed in Section \ref{subsec:roadmap}, all $\cG$ events we will use are likely and all $\SG$ events have roughly the same probability, $q^{\Theta(Rw)}$.
\begin{obs}
\label{obs:prob:G}
Let $R\ge w^2\log(1/q)/q^\a$ and $L\le q^{4w}$, let $\ur$ be admissible (see Definition \ref{def:snail}) and $r_{i-1}\ge q^{-2w}$ for some $i\in[2k]$. Then $\mu(\cG(T_i^\pm))\ge 1-o(1)$ and $\mu(\cG(\oB))\ge 1-o(1)$.
\end{obs}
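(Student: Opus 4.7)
The plan is to establish both bounds via a Chernoff-type estimate on each slice combined with a union bound, leveraging the product structure of $\mu$ on disjoint slices recorded in Remark \ref{rem:prod}.

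For $\cG(T^\pm_i)=\bigcap_j \ST^\pm_{i,j}$, I would first estimate $\mu(\ST^\pm_{i,j})$ for each fixed $j$. By Definition \ref{def:snail2} every non-empty slice $ST^\pm_{i,j}$ is a segment orthogonal to $u_{k+i}$ of length $\Omega(r_{i-1})\ge \Omega(q^{-2w})$, so it can be partitioned into $\Omega(q^{-2w}/w)$ disjoint blocks of $w$ consecutive sites, each fully infected independently with probability $q^w$. This will yield
\[
1-\mu(\ST^\pm_{i,j})\le (1-q^w)^{\Omega(q^{-2w}/w)}\le \exp\!\left(-\Omega(q^{-w}/w)\right).
\]
Since the number of non-empty slices in $T^\pm_i$ is bounded by $O(r_i)\le O(L)\le O(q^{-4w})$ by admissibility and the hypothesis $L\le q^{-4w}$, a union bound over $j$ will deliver $1-\mu(\cG(T^\pm_i))=o(1)$.

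For $\cG(\oB)=\bigcap_j\bigcap_{u_i\in\widehat\cS_0}\SB_{i,j}$, each non-empty side $SB_{i,j}$ is a segment of length $\Theta(R)$ on the line $\lambda_j u_0+\ell_{u_i}$. Using Assumption \ref{ass:1} together with Lemma \ref{lem:strip}\ref{case:a}, a $u_i$-helping set fits inside a window of $\ell_{u_i}$ of constant diameter and consists of at most $\alpha$ sites. Thus $SB_{i,j}$ contains $\Omega(R)$ disjoint candidate windows, each hosting a fully infected helping set with probability at least $q^\alpha$, which together with $R\ge w^2\log(1/q)/q^\alpha$ gives
\[
1-\mu(\SB_{i,j})\le \exp\!\left(-\Omega(Rq^\alpha)\right)= q^{\Omega(w^2)}.
\]
The number of non-empty pairs $(i,j)$ is bounded by $|\widehat\cS_0|\cdot O(L)=O(q^{-4w})$, so picking $w$ large enough (depending only on $\cU$) so that $\Omega(w^2)>4w$ and taking a union bound will close the argument.

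The delicate point I expect to face is verifying that, under Assumption \ref{ass:1}, helping sets can be chosen with at most $\alpha$ infected sites rather than a constant multiple of $\alpha$: this is precisely what the assumption buys, since it ensures that a single translate of $Z_{u_i}\subset \ell_{u_i}$ of size $\alpha$ already generates an infinite orbit on $\ell_{u_i}$, so that one translate suffices and the probability of an infected helping set is $\ge q^\alpha$ rather than $\ge q^{c\alpha}$ for some $c>1$, which would make the bound $Rq^{c\alpha}$ too small for the union bound to close.
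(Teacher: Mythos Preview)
Your treatment of $\cG(T^\pm_i)$ is correct and essentially identical to the paper's (the paper uses the product formula from Remark~\ref{rem:prod} rather than a union bound, but the computation is the same). For $\cG(\oB)$ the paper gives no details and simply cites \cite{Martinelli19a}*{Lemma~6.5}, so your more explicit argument is welcome, but it contains a gap.

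The issue is your claim that under Assumption~\ref{ass:1} a $u_i$-helping set consists of at most $\alpha$ infected sites, i.e.\ that one may take $m=1$ in Lemma~\ref{lem:strip}\ref{case:a}. Assumption~\ref{ass:1} only asserts that $Z_{u_i}\subset\ell_{u_i}$; it does \emph{not} say that a single translate of $Z_{u_i}$ together with $\bbH_{u_i}$ infects all of $\ell_{u_i}$. The set $[\bbH_{u_i}\cup Z_{u_i}]_\cU\cap\ell_{u_i}$ is guaranteed to be infinite but may be a proper periodic subset, and Lemma~\ref{lem:strip}\ref{case:a} asks for all of $\partial_{u_i}\L$ to become infected, which is why $m>1$ translates may be needed. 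With $m>1$ your ``single window'' estimate gives probability $q^{m\alpha(u_i)}$ per window, and $Rq^{m\alpha(u_i)}$ need not diverge, so the union bound over $O(q^{-4w})$ slices does not close.

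The fix is straightforward and does not require $m=1$: treat the $m$ offsets separately. For each fixed $j\in\{1,\dots,m\}$ the segment $SB_{i,j'}$ contains $\Omega(R)$ well-separated candidate values of $k_j$, and for each the event that $Z+a_j+k_jb$ is fully infected has probability $q^{\alpha(u_i)}\ge q^\alpha$ independently. Hence the probability that no good $k_j$ exists for some $j$ is at most $m(1-q^\alpha)^{\Omega(R)}\le m\exp(-\Omega(Rq^\alpha))=q^{\Omega(w^2)}$, and now your union bound over $|\widehat\cS_0|\cdot O(L)$ slices goes through exactly as you wrote.
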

\begin{proof}
For the first assertion notice that the condition implies that for all $j>0$, $ST^\pm_{i,j}$ is either empty or has cardinality at least $\O(q^{-2w})$. Then by Remark \ref{rem:prod}
\[\mu(\cG(T^\pm_i))=\prod_j\mu(\ST^\pm_{i,j})\ge\left(1-(1-q^w)^{\O(q^{-2w}/w)}\right)^{O(r_i)}\ge 1-o(1),\]
since $r_i\le L\le q^{4w}$ by Definition \ref{def:snail} and by assumption.

The second assertion is proved similarly (see e.g.\ \cite{Martinelli19a}*{Lemma 6.5}).
\end{proof}
\begin{obs}
\label{obs:prob:SG}
Let $R\ge w^2\log(1/q)/q^\a$, $L\le q^{-4w}$, let \ur be admissible such that for some $i\in[2k]$, $r_{i+1}=0$ and $r_{i-1}\ge q^{-2w}$ with the convention $r_{-1}=L$. Then
\begin{equation}
\label{eq:muG}
\mu(\SG(V_L^R(\ur)))=q^{\Theta(Rw)}.
\end{equation}
\end{obs}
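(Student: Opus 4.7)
My plan is to exploit the product structure of $\SG(V)$ made explicit in Remark~\ref{rem:prod}. The regions $A$, $HA+Lu_0$, $\oB$ and the trapezoids $T^\pm_i$ for $i\in[2k]$ are pairwise disjoint by construction, and each of the events $\cA$, $\HA$, $\cG(\oB)$, $\cG(T^\pm_i)$ is measurable with respect to the configuration in its own region. Since $\mu$ is a Bernoulli product measure, these events are mutually independent and
\[
\mu(\SG(V))=\mu(\cA)\,\mu(\HA)\,\mu(\cG(\oB))\prod_{i\in[2k]}\mu(\cG(T^+_i))\,\mu(\cG(T^-_i)).
\]

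The scaling will come entirely from the first two factors, $\mu(\cA)=q^{|A\cap\bbZ^2|}$ and $\mu(\HA)=q^{|(HA+Lu_0)\cap\bbZ^2|}$. From Definition~\ref{def:ring}, both $A$ and $HA$ are a (half\nobreakdash-)annulus of constant width $w$ bordering a closed convex polygon whose sides have length of order $\Theta(R)$, so $|A\cap\bbZ^2|$ and $|(HA+Lu_0)\cap\bbZ^2|$ are both $\Theta(Rw)$ and hence $\mu(\cA)\,\mu(\HA)=q^{\Theta(Rw)}$.

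For the remaining factors I would invoke Observation~\ref{obs:prob:G}, which yields $\mu(\cG(\oB))=1-o(1)$ and $\mu(\cG(T^\pm_j))=1-o(1)$ for every non-empty trapezoid. The only hypothesis that requires checking is $r_{j-1}\ge q^{-2w}$ for each such trapezoid. This follows directly from admissibility: the inequalities $r_m\le\d r_{m-1}$ with $\d<1$ force $(r_m)_m$ to be non-increasing, so together with $r_{i+1}=0$ they give $r_m=0$ for all $m>i$. Consequently every non-empty trapezoid has index $j\le i$, for which $r_{j-1}\ge r_{i-1}\ge q^{-2w}$; for $j>i$ the trapezoid is empty and $\mu(\cG(T^\pm_j))=1$ vacuously. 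Since the total number of trapezoid factors is $4k=O(1)$, the combined $\cG$-contribution multiplies to $\Theta(1)$, producing $\mu(\SG(V))=q^{\Theta(Rw)}$ as desired. I expect no real obstacle: the statement is essentially a bookkeeping exercise built on top of Observation~\ref{obs:prob:G} and the product structure of $\mu$, the only mild care being the verification that admissibility propagates the lower bound $r_{i-1}\ge q^{-2w}$ to all smaller indices.
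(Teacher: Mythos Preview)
Your proof is correct and follows essentially the same approach as the paper: factor $\mu(\SG(V))$ via the product structure of Remark~\ref{rem:prod}, observe that the $\cG$-factors are $1-o(1)$ by Observation~\ref{obs:prob:G}, and identify the main contribution $q^{\Theta(Rw)}$ as coming from the fully infected annulus and half-annulus. The paper compresses all of this into a single line, while you spell out the verification that admissibility together with $r_{i+1}=0$ and $r_{i-1}\ge q^{-2w}$ guarantees the hypothesis of Observation~\ref{obs:prob:G} for every non-empty trapezoid; this extra care is warranted and entirely in line with the intended argument.
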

\begin{proof}
Using Remark \ref{rem:prod} and Observation \ref{obs:prob:G}, it suffices to note that
\[\mu(\cA\cap\HA)\ge q^{|A|+|HA|}=q^{\Theta(Rw)}.\tag*{\qedhere}\]
\end{proof}

\subsection{Key step}
We are ready to state the main result of this
section. In the sequel, for any $\L\subset \bbZ^2,$ any $x\in \L$ and any $\o_\L\in \O_\L$ we shall write $c^\L_x(\o_\L)$ for the constraint $c_x(\o)$ computed for the configuration $\o$ equal to $\o_\L$ in $\L$ and equal to $1$ elsewhere.  By construction, $c_x^\L(\o_\L)\le c_x(\o')$ for any $\o'\in \O$ such that $\o'_\L=\o_\L$ and $c_x^\L\ge c_x^{\L'}$ for any $\L'\subset\L$. Then for any snail $V$ (or base) we write $\g_V$ for the smallest constant $\g\in[1,\infty]$ such that the Poincar\'e inequality
    \begin{equation}
      \label{eq:3}
     \var_{V}(f\tc \SG(V))\le \g
      \sum_{x\in V}\mu_{V}\left(c^V_x\var_x(f)\right)
    \end{equation}
holds for every function $f:\O\to\bbR$. 
\begin{thm}
    \label{thm:key step}
    There exist $w_0,\d_0>0$ not depending on $q$ such that for any $0<\d\le\d_0$ and $w\ge w_0$ the following holds for any $R=\Theta(w^2\log (1/q)/q^\a)$. Consider the snail $V=V_L^R(\ur)$ for admissible $L,\ur$ such that $r_{2k-1}\ge q^{-2w}$ and $L\le q^{-4w}$. Then
\begin{equation}
  \label{eq:4}
\g_V\le e^{-O(w^4\log^3(1/q)/q^\a)}.
\end{equation}
 \end{thm}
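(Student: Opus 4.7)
The plan is to follow the inductive roadmap of Section \ref{subsec:roadmap}, establishing Poincaré inequalities on progressively larger volumes, each time conditioned on the appropriate super good event. By the product structure of Remark \ref{rem:prod} and a two-block step coupling the two snails through their common base $B$, it suffices to treat the right-snail $V^+$, so I drop the superscript. The induction has two nested layers: an outer layer over the trapezoids $T_0, T_1, \dots, T_{2k-1}$, and an inner doubling layer over the slices of the current trapezoid.

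\textbf{Base case, annulus interior and base.} I would first treat $\Aint$ on the event $\cA$ that the annulus is fully infected. Partitioning $\Aint$ into thin strips parallel to one axis and using Lemma \ref{lem:strip}\ref{item:w:consecutive} together with $\cA$ as a permanent source of $w$ consecutive infections, a direct FA1f-type argument (Lemma \ref{lem:FAperiodic} with $\widehat{\cS}=\{0,1\}^{\text{strip}}$ and $\kappa = O(1)$) gives $\gamma_{\Aint}\le q^{-O(w|A|)}$. Combining this with the East-like Poincaré inequality for the truncated base $\oB$ conditioned on $\cG(\oB)$, which is essentially available from \cite{Martinelli19a}*{Sec.~4--6} under Assumption \ref{ass:1} (the $\SB_j$ events provide helping sets so that the East-like sweep in direction $u_0$ propagates infection through the slices $SB_j$), yields $\gamma_B\le \exp(O(\log^3(1/q)/q^\a))$ by Remark \ref{rem:prod}.

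\textbf{Adding slices to a trapezoid by doubling.} Fix a trapezoid $T_i$ and a height $n\ge 1$; write $V_n$ for the snail obtained from $B$ by attaching the first $n$ slices of $T_i$ (with the previous trapezoids already fully attached and treated). For the base step $n=1$, I would run the FA1f chain of Lemma \ref{lem:FAperiodic} along $ST_{i,1}$ with $\kappa=w$, facilitated by $w$ consecutive infections inside $ST_{i,1}$; a two-block step (Lemma \ref{lem:2block}) resampling $B$ on $\SG(B)$ then produces, at each updated site, a fully infected $\cU$-legal neighbourhood (using Lemma \ref{lem:strip} and the helping sets of $\cG(\oB)$), converting facilitated resamplings of $ST_{i,1}$ into genuine $\cU$-KCM flips in the Dirichlet form \eqref{eq:Dirichlet}. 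The cost is $\gamma_B^{O(1)}$. For the doubling step, I would bisect $V_{2n}$ along the axis $u_{k+i}$ into a ``left'' block $\widetilde V$ containing $B$ and the first $n$ slices (itself a snail with the same shape and a smaller trapezoid) and a ``right'' block $\overline T_i$ made of the remaining $n$ slices. The facilitating event for Lemma \ref{lem:2block} is that a shifted copy $\overline B$ of the base, nestled inside $\widetilde V$ so that its shifted annulus and half-annulus cover one $\cU$-interaction layer at the interface, is super good; by Observation \ref{obs:prob:SG} this has probability $\mu(\SG) = q^{\Theta(Rw)}$. On this event, $\overline V := \overline B\cup\overline T_i$ is again a snail with $n$ slices of the current trapezoid, and the relaxation on $\overline T_i$ is dominated by $\gamma_{\overline V}$ (plus an additive $\gamma_B$ cost from resampling $\overline B$ back to equilibrium). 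The recursion reads
\[
\gamma_{V_{2n}} \;\le\; 2\mu(\SG)^{-1}\bigl(\gamma_{\widetilde V}+\gamma_{\overline V}\bigr) \;+\; O(1),
\]
and iterating $O(\log(1/q))$ times from $n=1$ up to $n=r_i/\rho_{k+i}$ multiplies the base-case cost by $\mu(\SG)^{-O(\log(1/q))} = \exp(O(Rw\log^2(1/q)))$. With $R=\Theta(w^2\log(1/q)/q^\a)$ this gives the promised $\exp(O(w^4\log^3(1/q)/q^\a))$.

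\textbf{Iterating over trapezoids and combining sides.} Repeating the doubling construction for $i=0,1,\dots,2k-1$ in order, each time treating the current trapezoid against the super good event on the previous (smaller) snail, adds only a bounded number of factors of the same form (since $2k = |\hS|$ does not depend on $q$), yielding $\gamma_{V^+}\le \exp(O(w^4\log^3(1/q)/q^\a))$. A final Lemma \ref{lem:2block} applied to the partition $V = V^+\cup V^-$ with facilitating event $\SG(V^+)\cap\SG(V^-)$ on the shared base assembles the bound \eqref{eq:4}.

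The main obstacle I expect is the doubling step, specifically verifying that the facilitating event ``$\overline B$ is super good'' is supported on $\widetilde V$ only (so that it is a legitimate constraint for the two-block inequality), while simultaneously the shifted annulus $\overline A$ provides the infected boundary condition needed to invoke $\gamma_{\overline V}$ when resampling $\overline T_i$. This forces a delicate choice of the shift of $\overline B$ and relies crucially on the admissibility hypothesis $r_{i}\le\d r_{i-1}$ with $\d$ small, which guarantees that $\overline A$ sits strictly inside $\widetilde V$ at $\cU$-distance $\gg w$ from both boundaries. A second, more technical, point is to keep track of monotonicity $c_x^{\Lambda'}\ge c_x^\Lambda$ when restricting constraints to the two blocks, ensuring that every Dirichlet-form term generated by the auxiliary block dynamics is dominated by a term of the original $\cU$-KCM Dirichlet form on $V$.
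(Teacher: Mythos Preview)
Your outline is essentially the paper's proof: same base case (Lemma~\ref{lem:Aint} plus the East-like bound from \cite{Martinelli19a} giving Proposition~\ref{prop:base case}), same single-slice step via FA1f on the slice followed by a two-block resampling of the snail below (Lemma~\ref{prop:slice}), same bisection/doubling of the current trapezoid via Lemma~\ref{lem:2block} (Lemma~\ref{prop:bisection}), and the same assembly over trapezoids and over $V^\pm$.

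There is one genuine imprecision in your doubling step. You write $\overline V:=\overline B\cup\overline T_i$, a shifted \emph{base} together with the remaining $n$ slices of $T_i$. For $i>0$ this object is not a snail in the sense of Definition~\ref{def:snail}: admissibility forces $r_j\ge r_i/\d^{\,i-j}$ for $j<i$, so one cannot have $r_0=\dots=r_{i-1}=0$ with $r_i>0$; geometrically, the upper half of $T_i$ does not sit on any translate of $B$, it sits on $T_{i-1}$. In the paper the shift is taken in direction $u_{i+1}$ and $\overline V$ is a \emph{full} shifted snail with all trapezoids $\overline T_0,\dots,\overline T_i$ present (each slightly shorter than in $\widehat V$); the facilitating event is accordingly $\SG(\overline V_{i-1})$, not merely $\SG(\overline B)$. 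This is exactly the ``delicate choice of shift'' you anticipate, but its consequence is that the induction cannot be indexed only by the number $n$ of slices of the current trapezoid: the bisection produces snails whose earlier trapezoids and base length also shrink, so one must induct over a whole family of shapes. The paper handles this via Definition~\ref{def:relevant} (relevant snails of type $i$) and the uniform bound $\s=1/\min_{\widehat V}\mu(\SG(\widehat V))$ over that family. With that adjustment your proof goes through and coincides with the paper's.
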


\begin{claim}
We have $\g_{V}\le 3\max(\g_{V^+},\g_{V^-})$.
\end{claim}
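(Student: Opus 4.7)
The plan is to exploit the product decomposition of the conditional measure furnished by Remark~\ref{rem:prod}. Since $V^+ \cap V^- = B$, the region $V$ splits as the disjoint union $B \sqcup (V^+\setminus B) \sqcup (V^-\setminus B)$, along which
\[
\mu_V(\cdot \tc \SG(V)) = \nu_B \otimes \nu_+ \otimes \nu_-,
\]
where $\nu_B = \mu_B(\cdot \tc \SG(B))$ and $\nu_\pm = \bigotimes_{i\in[2k]} \mu_{T_i^\pm}(\cdot \tc \cG(T_i^\pm))$. The crucial observation is that $\nu_B \otimes \nu_\pm$ is exactly $\mu_{V^\pm}(\cdot \tc \SG(V^\pm))$, again by Remark~\ref{rem:prod}, so a variance inequality for the full snail will see the Poincar\'e constants $\gamma_{V^\pm}$ directly.

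I will then apply Lemma~\ref{lem:var:conv} with $\nu_B$ playing the role of the spectator factor $\nu_3$ and $\nu_\pm$ as $\nu_1,\nu_2$, obtaining
\[
\var_V(f \tc \SG(V)) \le \nu_+\!\left(\var_{\nu_-\otimes\nu_B}(f)\right) + \nu_-\!\left(\var_{\nu_+\otimes\nu_B}(f)\right).
\]
Recognising the inner variances as $\var_{V^\mp}(f \tc \SG(V^\mp))$ with the opposite block frozen, the definition of $\gamma_{V^\mp}$ immediately bounds each of them by $\gamma_{V^\mp}\sum_{x\in V^\mp}\mu_{V^\mp}(c_x^{V^\mp}\var_x(f))$. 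To reach the form required by the definition of $\gamma_V$, I will transfer the remaining $\nu_\pm$-integration to the unconditioned product $\mu_{V^\pm\setminus B}$ via the estimate $\nu_\pm(h) \le (1+o(1))\,\mu_{V^\pm\setminus B}(h)$ for $h\ge 0$. This follows from Observation~\ref{obs:prob:G}: admissibility $r_j \le \delta r_{j-1}$ combined with $r_{2k-1}\ge q^{-2w}$ forces $r_{i-1}\ge q^{-2w}$ for every $i\in[2k]$, so each $\cG(T_i^\pm)$ has probability $1-o(1)$, and hence so does their intersection by independence across the disjoint trapezoids. Combined with the monotonicity $c_x^{V^\mp}\le c_x^V$ (from $V^\mp\subset V$), this converts each Dirichlet term into $(1+o(1))\,\mu_V(c_x^V\var_x(f))$.

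Summing and noting that $V^+$ and $V^-$ cover $V$ with intersection $B$, so that $\sum_{x\in V^+}+\sum_{x\in V^-} = \sum_{x\in V} + \sum_{x\in B} \le 2\sum_{x\in V}$, yields an overall prefactor $(2+o(1))\max(\gamma_{V^+},\gamma_{V^-})$, which is at most $3\max(\gamma_{V^+},\gamma_{V^-})$ for $q$ sufficiently small, as required. I do not foresee any genuine obstacle: the product structure supplied by Remark~\ref{rem:prod} is tailored precisely to this reduction, and the constant $3$ in the statement is a convenient slack value that absorbs the factor of two coming from the overcounting of the shared base $B$.
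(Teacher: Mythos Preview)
Your overall strategy matches the paper's proof, but your identification of the overlap $V^+\cap V^-$ is incorrect. You write that $V^+\cap V^-=B$, yet by the symmetry built into Definition~\ref{def:snail} (the left-snail is the reflection of the right-snail across the line orthogonal to $u_0$ through $\tfrac12 Lu_0$) the first trapezoid is invariant under this reflection, so $T_0^+=T_0^-$; this is visible in Figure~\ref{fig:snails}, where the top trapezoid is labelled $T_0^\pm$. Hence $V^+\cap V^-=B\cup T_0$, and your tensor decomposition $\nu_B\otimes\nu_+\otimes\nu_-$ is ill-defined as written, since both $\nu_+$ and $\nu_-$ contain the factor $\mu_{T_0}(\cdot\tc\cG(T_0))$.

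The repair is immediate and is exactly what the paper does: take $\Lambda_3=B\cup T_0^+=B\cup T_0^-$ with $\nu_3=\mu_{\Lambda_3}(\cdot\tc\SG(B)\cap\cG(T_0^+))$, and let $\Lambda_1=V^+\setminus\Lambda_3$, $\Lambda_2=V^-\setminus\Lambda_3$ with $\nu_1,\nu_2$ the corresponding products of $\mu_{T_i^\pm}(\cdot\tc\cG(T_i^\pm))$ over $i\ge 1$. With this correction Remark~\ref{rem:prod} still gives $\nu_2\otimes\nu_3=\mu_{V^-}(\cdot\tc\SG(V^-))$ and $\nu_1\otimes\nu_3=\mu_{V^+}(\cdot\tc\SG(V^+))$, and the remainder of your argument (Lemma~\ref{lem:var:conv}, the Poincar\'e inequalities for $V^\pm$, removal of the conditioning via Observation~\ref{obs:prob:G}, and the monotonicity $c_x^{V^\pm}\le c_x^V$) goes through verbatim.
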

\begin{proof}
Set $\L_1=V^+\setminus(B\cup T_0^+)$, $\L_2=V^-\setminus(B\cup T_0^+)$, $\L_3=B\cup T_0^+=B\cup T_0^-$, $\nu_1=\mu_{\L_1}(\cdot\tc \bigcap_{i=1}^{2k-1}\cG(T^+_i))$, similarly for $\nu_2$ and $\nu_3=\mu_{\L_3}(\cdot\tc\SG(B)\cap \cG(T_0^+))$. By Remark \ref{rem:prod} we can apply Lemma~\ref{lem:var:conv} to obtain
\begin{align*}\var_V(f\tc\SG(V))&\le \begin{multlined}[t]
\g_{V^-}\sum_{x\in V^-}\nu_1\left(\mu_{V^-}\left(c^{V^-}_x\var_x(f)\right)\right)\\+\g_{V^+}\sum_{x\in V^+}\nu_2\left(\mu_{V^+}\left(c^{V^+}_x\var_x(f)\right)\right)\end{multlined}\\
&\le (1+o(1))(\g_{V^-}+\g_{V^+})\sum_{x\in V}\mu\left(c^V_x\var_x(f)\right),
\end{align*}
where in the last inequality we used Observation \ref{obs:prob:G} to remove the conditioning of $\nu_1$ and $\nu_2$.
\end{proof}

Therefore, in order to prove \eqref{eq:4} it suffices to prove the analogous statement with $V$ replaced by $V^\pm$. In the sequel we will concentrate on proving  \eqref{eq:4} for the best constant $\g_{V^+}$ in the Poincar\'e inequality \eqref{eq:3} with $V$ replaced by its right-snail $V^+$. The proof is based on comparison methods between Markov processes and induction over right-snails with different $L$ and $\ur$ as outlined in Section \ref{subsec:roadmap}. If we exchange right-snails with left-snails the same proof will then apply to the left-snail $V^-$ as well. Since our arguments no longer require a left-snail, for lightness of notation, we drop the superscript ``$+$'' from our notation whenever possible.

The proof of the theorem is decomposed into two quite different steps (see Propositions \ref{prop:base case} and \ref{prop:snail:reduction} below). In the first one, labeled the \emph{base case}, we consider a right-snail $V$ with no trapezoids (\ur=0). In the second step, labeled \emph{reduction step}, roughly speaking we compare the Poincar\'e constant $\g_V$ of a generic right-snail $V$ with the same constant computed for its base $B$.

The conclusion of Theorem \ref{thm:key step} follows at once from \eqref{eq:muG}, Proposition \ref{prop:base case} and Proposition \ref{prop:snail:reduction}. In the sequel fix $\d,w,R$ as in the statement of the theorem and recall that $B=V_L^R(0)$.

\begin{rem}
\label{rem:log1}
For future purposes (see the discussion in Section  \ref{sec:open pb}) it is very important to emphasise that it is only in the first step that we use directly the definition of the event $\SG(B)$ entering in the event $\SG(V)$ (cf.\ Definition \ref{def:good events}). In the second step  the only property of the event $\SG(B)$ that is needed is that it is a decreasing event in $\O_{B}$ w.r.t.\ the partial order $\o\prec \o'$ iff $\o_x\le \o'_x$ for all $x\in B$. 
\end{rem}
\subsection{Base case} 
\begin{prop}
\label{prop:base case}
For any $f:\O_{B}\to \bbR$
\[\var_{B}(f\tc \SG(B))\le q^{-O(Rw\log L)}\sum_{x\in B}\mu_{B}\left(c^B_x\var_x(f)\right).\]
\end{prop}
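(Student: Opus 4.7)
The plan is to exploit the product structure of $\mu_B(\cdot\tc\SG(B))$ spelled out in Remark~\ref{rem:prod}: conditioned on $\SG(B)$ the measure factorizes as a Dirac mass on $A\cup(HA+Lu_0)$ (forced to be infected), the unconditioned $\mu_{\Aint}$, and $\mu_{\oB}(\cdot\tc\cG(\oB))$. Applying Lemma~\ref{lem:var:conv} with $\L_1=\Aint$, $\L_2=\oB$, $\L_3=A\cup(HA+Lu_0)$, the problem reduces to controlling $\mu(\var_\Aint(f))$ and $\mu(\var_\oB(f\tc \cG(\oB)))$ separately on $\SG(B)$.

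For the annulus interior I would follow the roadmap in Section~\ref{subsec:roadmap}: slice $\Aint$ into $\Theta(R/w)$ strips of bounded width parallel to $u_0$ and set up a linear FA1f-type auxiliary chain on these strips, with facilitating event ``the neighbouring strip is fully infected''. Lemma~\ref{lem:FAperiodic} yields a Poincar\'e inequality for this chain with constant $q^{-O(Rw)}$, the inverse probability of one fully infected strip. Whenever a strip and its neighbour are both fully infected, Lemma~\ref{lem:strip}\ref{item:w:consecutive} guarantees that every single-site flip inside the strip is legal for $\cU$-KCM (the $w$ consecutive infections required by that lemma are read off the neighbour), so the Dirichlet form of the auxiliary chain is dominated by $q^{-O(Rw)}\sum_{x\in\Aint}\mu(c_x^B\var_x(f))$, which is absorbed in the final bound.

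The heart of the argument is $\mu(\var_\oB(f\tc\cG(\oB)))$; this is essentially the content of \cite{Martinelli19a}*{Section~6}. Partition $\oB$ into its slices $(SB_j)_{j>0}$ from Definition~\ref{def:snail2}, each of cardinality $\Theta(R)$, and call a sub-block of $\oB$ (a union of consecutive slices together with a translate of the annulus) \emph{super good} if the annulus translate is infected and each of its slices carries the helping sets of $\SB_j$. By Observation~\ref{obs:prob:SG} applied to the corresponding sub-snail, such an event has probability $q^{\Theta(Rw)}$. I would then run the East-type bisection of \cite{Martinelli19a}: at each step apply Lemma~\ref{lem:2block} to decouple a super good sub-block from its twice-larger neighbour, paying a factor $q^{-O(Rw)}$; after $O(\log L)$ doublings the whole $\oB$ is covered and one collects an auxiliary Poincar\'e constant $q^{-O(Rw\log L)}$. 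Each legal move of this block dynamics is in turn implemented by the original $\cU$-KCM thanks to $\cG(\oB)$ together with the infected annulus translate, via iterated applications of Lemma~\ref{lem:strip}\ref{case:a} along slices.

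The main difficulty I foresee is precisely the comparison of Dirichlet forms at the innermost step: one must verify that, conditionally on a super good neighbour sub-block, the original $\cU$-KCM can realize every single-site flip of the block being updated. Here Assumption~\ref{ass:1} is crucial, as it enforces that each event $\SB_j$ depends only on a single lattice line, so the slice-by-slice percolation used to infect the sites of the block is compatible with the product structure of Remark~\ref{rem:prod}. With this in hand the bisection induction proceeds exactly as for the standard East model in \cite{Cancrini08}, and yields the announced constant $q^{-O(Rw\log L)}$.
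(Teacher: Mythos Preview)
Your approach is essentially the same as the paper's: split via Lemma~\ref{lem:var:conv}, handle $\Aint$ with an FA1f-type argument on strips, and import the East-bisection bound on $\oB$ from \cite{Martinelli19a}*{Section~6} (the paper simply cites \cite{Martinelli19a}*{Proposition~6.6}).

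There is one small but genuine gap in your treatment of $\Aint$. You invoke Lemma~\ref{lem:FAperiodic} with facilitating event ``the neighbouring strip is fully infected'', but that lemma requires $(1-\widehat\nu(\cH)^\k)^{n/(3\k)}<1/16$, i.e.\ a good block must exist with high probability. Here $\widehat\nu(\cH)=q^{\Theta(Rw)}$ while there are only $\Theta(R/w)$ strips, so the hypothesis fails badly. What actually makes the argument work is the \emph{infected boundary} $\cA$: it guarantees the facilitating event deterministically at both ends of the strip chain, so one should use the unconditioned FA1f Poincar\'e inequality with boundary condition (\cite{Martinelli19a}*{Proposition~3.4}, as the paper does) rather than Lemma~\ref{lem:FAperiodic}. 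Relatedly, after the FA1f step you need to bound $\1_{\cH_i^\pm}\var_{K_i}(f)$ by the Dirichlet form; your phrase ``every single-site flip inside the strip is legal'' is not quite right, since $K_i$ itself is not assumed infected. The correct mechanism is that $A\cup K_{i\pm 1}$ infects $K_i$ under $\cU$-bootstrap (via Lemma~\ref{lem:strip}\ref{item:w:consecutive}), and then \cite{Martinelli19a}*{Lemma~5.2} converts this into the desired Poincar\'e bound at cost $q^{-O(Rw)}$.
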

\begin{proof}[Proof of Proposition \ref{prop:base case}]
We first observe that, up to minor modifications, in \cite{Martinelli19a}*{Proposition 6.6}  it was proved that for all $f:\O_{\oB}\to \bbR$
\begin{equation}
  \label{eq:base1}
 \1_{\cA\cap\HA} \var_{\oB}(f\tc \cG(\oB)))\le q^{-O(Rw\log L)} \1_{\cA} \sum_{x\in\oB}\mu_{\oB}\left(c^B_x\var_x(f)\right).
\end{equation}
The next step in the proof is an analogous result for $\Aint$.
\begin{lem}
\label{lem:Aint}
For any $f:\O_{\Aint}\to \bbR$
\[
\1_{\cA}\var_{\Aint}(f)\le
  q^{-O(Rw)}\1_{\cA}\sum_{x\in\Aint}\mu_{\Aint}\left(c^{A\cup\Aint}_x\var_x(f)\right).\]
\end{lem}
\begin{figure}
    \centering
    \begin{tikzpicture}[line cap=round,line join=round,>=triangle 45,x=2.5cm,y=2.5cm]
\clip(-1,-1) rectangle (1,1);
\fill[fill=black,fill opacity=0.5] (-0.41,1) -- (-1,0.41) -- (-1,-0.41) -- (-0.41,-1) -- (0.41,-1) -- (1,-0.41) -- (1,0.41) -- (0.41,1) -- cycle;
\fill[line width=0pt,color=white,fill=white,fill opacity=1.0] (-0.35,0.85) -- (-0.85,0.35) -- (-0.85,-0.35) -- (-0.35,-0.85) -- (0.35,-0.85) -- (0.85,-0.35) -- (0.85,0.35) -- (0.35,0.85) -- cycle;
\fill[pattern=my north east lines] (-0.85,0.35) -- (-0.8,0.4) -- (-0.8,-0.4) -- (-0.85,-0.35) -- cycle;
\fill[pattern=my north east lines] (-0.65,0.55) -- (-0.65,-0.55) -- (-0.5,-0.7) -- (-0.5,0.7) -- cycle;
\fill[pattern=my north east lines] (-0.35,0.85) -- (-0.2,0.85) -- (-0.2,-0.85) -- (-0.35,-0.85) -- cycle;
\fill[pattern=my north east lines] (-0.05,0.85) -- (0.1,0.85) -- (0.1,-0.85) -- (-0.05,-0.85) -- cycle;
\fill[pattern=my north east lines] (0.4,-0.8) -- (0.35,-0.85) -- (0.25,-0.85) -- (0.25,0.85) -- (0.35,0.85) -- (0.4,0.8) -- cycle;
\fill[pattern=my north east lines] (0.55,-0.65) -- (0.7,-0.5) -- (0.7,0.5) -- (0.55,0.65) -- cycle;
\draw (-0.41,1)-- (-1,0.41);
\draw (-1,0.41)-- (-1,-0.41);
\draw (-1,-0.41)-- (-0.41,-1);
\draw (-0.41,-1)-- (0.41,-1);
\draw (0.41,-1)-- (1,-0.41);
\draw (1,-0.41)-- (1,0.41);
\draw (1,0.41)-- (0.41,1);
\draw (0.41,1)-- (-0.41,1);
\draw (-0.85,0.35)-- (-0.8,0.4);
\draw (-0.8,0.4)-- (-0.8,-0.4);
\draw (-0.8,-0.4)-- (-0.85,-0.35);
\draw (-0.85,-0.35)-- (-0.85,0.35);
\draw (-0.65,0.55)-- (-0.65,-0.55);
\draw (-0.65,-0.55)-- (-0.5,-0.7);
\draw (-0.5,-0.7)-- (-0.5,0.7);
\draw (-0.5,0.7)-- (-0.65,0.55);
\draw (-0.35,0.85)-- (-0.2,0.85);
\draw (-0.2,0.85)-- (-0.2,-0.85);
\draw (-0.2,-0.85)-- (-0.35,-0.85);
\draw (-0.35,-0.85)-- (-0.35,0.85);
\draw (-0.05,0.85)-- (0.1,0.85);
\draw (0.1,0.85)-- (0.1,-0.85);
\draw (0.1,-0.85)-- (-0.05,-0.85);
\draw (-0.05,-0.85)-- (-0.05,0.85);
\draw (0.4,-0.8)-- (0.35,-0.85);
\draw (0.35,-0.85)-- (0.25,-0.85);
\draw (0.25,-0.85)-- (0.25,0.85);
\draw (0.25,0.85)-- (0.35,0.85);
\draw (0.35,0.85)-- (0.4,0.8);
\draw (0.4,0.8)-- (0.4,-0.8);
\draw (0.55,-0.65)-- (0.7,-0.5);
\draw (0.7,-0.5)-- (0.7,0.5);
\draw (0.7,0.5)-- (0.55,0.65);
\draw (0.55,0.65)-- (0.55,-0.65);
\begin{scriptsize}
\draw (-0.73,0) node {$K_{2}$};
\draw (-0.43,0) node {$K_{4}$};
\end{scriptsize}
\end{tikzpicture}
    \caption{Setting of the proof of Lemma \ref{lem:Aint}. Every second strip $K_i$ of $\Aint$ is hatched. The annulus $A$ is shaded.}
    \label{fig:Aint}
\end{figure}
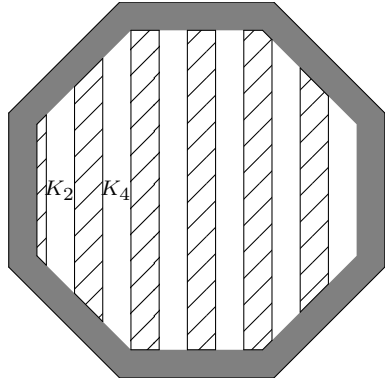
\begin{proof}[Proof of Lemma \ref{lem:Aint}]
Let us partition $\Aint$ into disjoint strips $K_i$ of width $w$ perpendicular to $u_{0}$ and number them from left to right (see Figure \ref{fig:Aint}).

We can then apply \cite{Martinelli19a}*{Proposition 3.4} on the generalised FA1f KCM to obtain
\[
\var_{\Aint}(f)\le
q^{-O(Rw)}\sum_{i}\mu((\1_{\cH_i^+}+\1_{\cH_i^-})\var_{K_i}(f))
,\]
where $\cH^\pm_i$ are the events that $K_{i\pm1}$ is fully infected and we use the convention that $\cH^+_i$ occurs for the last strip and $\cH^-_i$ does for the first one, which corresponds to the boundary condition provided by $A$. W.l.o.g.\ it then suffices to bound the generic term $\mu(\1_{\cH^+_i}\var_{K_i}(f))$. But this can be done using Lemma 5.2 of \cite{Martinelli19a} and Lemma \ref{lem:strip}\ref{item:w:consecutive}, which guarantees that if $A$ and $K_{i+1}$ are infected, then $K_i$ can also be infected by the $\cU$-bootstrap percolation restricted to $K_i\cup K_{i+1}\cup A$.
\end{proof}
Using Lemma~\ref{lem:var:conv} with $\L_1=\oB$, $\L_2=\Aint$, $\L_3=\emptyset$, $\nu_1=\mu_{\oB}(\cdot\tc \cG(\oB))$, and $ \nu_2=\mu_\Aint$, we obtain
\begin{multline*}
\1_{\cA\cap\HA}\var_{\oB\cup\Aint}(f\tc \cG(\oB))\\
\le\1_{\cA\cap\HA}\left(\mu_\Aint\left(\var_{\oB}(f\tc \cG(\oB))\right)+\mu_{\oB}\left(\var_{\Aint}(f)\tc \cG(\oB)\right)\right).
\end{multline*}
The first term in the r.h.s.\ above is bounded by 
\[q^{-O(Rw\log L)} \1_{\cA\cap\HA}\sum_{x\in\oB}\mu_{\oB\cup \Aint}(c^B_x\var_x(f)),\]
using  \eqref{eq:base1}, while the second one is bounded by 
\[q^{-O(Rw\log L)}\1_{\cA\cap\HA}\sum_{x\in\Aint}\mu_{\oB\cup\Aint}\left(c^{A\cup\Aint}_x\var_x(f)\tc \cG(\oB)\right).
\]
by Lemma \ref{lem:Aint}. By Remark \ref{rem:prod} we immediately get that
\begin{align*}
  \var_{B}(f\tc \SG(B))= {}&\mu_B(\var_{\oB\cup\Aint}(f\tc \cG(\oB))\tc \cA\cap\HA)\\
  \le{}& \mu(\SG(B))^{-1}q^{-O(Rw\log L)}\sum_{x\in B}\mu_{B}(c^B_x\var_x(f))
\end{align*}
and the proposition follows from Observation \ref{obs:prob:SG}.
\end{proof}
\subsection{Reduction step} 
Before we can state a relationship between $\g_V$ and $\g_B$, we need the following notion, which will cover all snail shapes that may arise during the reduction. Recall that $\d$, $w$, $V=V^{R,+}_L(\ur)$ are fixed as in the statement of Theorem \ref{thm:key step} and that we do not write the $+$ index, though all snails we refer to are right-snails. Also recall that all snails are defined by admissible sequences (see Definition \ref{def:snail}).
\begin{defn}
\label{def:relevant}
Let $C$ be a constant chosen sufficiently large depending on $\widehat \cS$, but much smaller than $1/\d_0$ in Theorem \ref{thm:key step}. We say that a snail $\widehat V=V^R_{\widehat L}(\widehat\ur)$ is \emph{of type $i\in[2k]$} if 
\begin{enumerate}[label=(\alph*),ref=(\alph*)]
    \item\label{item:type:i} $\widehat r_{i+1}=0$,
    \item\label{item:ri:smaller} $\widehat r_i\le r_i$,
    \item\label{item:r:difference} for all $j< i$ it holds that $0\le r_j-\widehat r_j\le C\left(r_i-\widehat r_i+\sum_{l=i+1}^{2k-1}r_l\right),$
    \item\label{item:L:difference} $0\le L-\widehat L\le C\left(r_i-\widehat r_i+\sum_{l=i+1}^{2k-1}r_l\right)$.
\end{enumerate}
We say that $\widehat V$ is \emph{relevant} if there exists $i\in[2k]$ such that $\widehat V$ is of type $i$. In particular, a base $\widehat B=V^R_{\widehat L}(0)$ is relevant iff $0\le L-\widehat L=O(r_0)$.
\end{defn}
In words, $\widehat V$ is relevant if all trapezoids except the last one are only slightly shorter than the corresponding ones for $V$ and similarly for the base, while the last trapezoid may be as much shorter as needed. Indeed, observe that by admissibility $\sum_{l=i+1}^{2k-1}r_l< 2 r_{i+1}$ for any $i\in[2k]$. 

Let us mention that the technical second inequalities in conditions \ref{item:r:difference} and \ref{item:L:difference} in the definition above are only needed for the inductive procedure below to always yield relevant snails. We invite the reader to ignore those conditions and admit that all smaller snails arising in our argument have sizes which can be treated by induction.

\begin{prop}
\label{prop:snail:reduction}
Let $\s=1/\min_{\widehat V}\mu_{\widehat V}(\SG(\widehat V))$ and $\G=\max_{\widehat B}\g_{\widehat B}$, where the $min$ and $\max$ run over relevant snails and relevant bases respectively. Then
\[\g_V\le\left(q^{-w^4}\s\right)^{O(\log L)}\G.
\]
\end{prop}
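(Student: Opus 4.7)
The plan is to reduce $V$ to its base $B$ by iteratively stripping off trapezoids from the outermost ($T_{2k-1}$) inward, using two operations: a one-slice elimination (removing the last remaining slice of the outermost non-empty trapezoid) and a bisection (halving the height of that trapezoid). Each operation produces a controlled multiplicative factor in the Poincar\'e constant. Since the heights satisfy $r_i\le\d^i L$, the total number of bisection steps across all trapezoids is $\sum_i O(\log r_i)=O(\log L)$, which is exactly the exponent in the claimed bound.

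The first ingredient is a one-slice lemma: if $\widehat V$ is a relevant snail whose outermost non-empty trapezoid $T_i$ consists of the single slice $ST_{i,1}$, and $\widehat V'$ is $\widehat V$ with this slice removed, then
\[
\g_{\widehat V}\le q^{-O(w^4)}\bigl(\g_{\widehat V'}+\s\bigr).
\]
The proof combines a two-block decomposition in the sense of Lemma \ref{lem:2block}, splitting $\widehat V$ into $\widehat V'$ and $ST_{i,1}$ and conditioning the second step on the event $\SG(\widehat V')$ (which has probability at least $1/\s$), with an FA1f-type estimate on $ST_{i,1}$ via Lemma \ref{lem:FAperiodic}: the facilitating event at $x\in ST_{i,1}$ is the presence of $w$ consecutive infected sites next to $x$ inside the slice; by Lemma \ref{lem:strip}\ref{item:w:consecutive}, once $\widehat V'$ is super good and such a local helping pattern is present, the update at $x$ is legal for the unrestricted $\cU$-KCM, so the corresponding variance terms appear directly in the Dirichlet form on the right-hand side of \eqref{eq:3}.

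The second ingredient is a bisection lemma: if $\widehat V$ has outermost non-empty trapezoid $T_i$ of height $\widehat r_i\ge 2$ and $\widehat V^{\sharp}$ denotes the snail obtained by replacing $\widehat r_i$ by $\lceil\widehat r_i/2\rceil$, then
\[
\g_{\widehat V}\le 2\s\,\g_{\widehat V^{\sharp}}.
\]
Here I apply Lemma \ref{lem:2block} with first block $\widetilde V$ equal to the inner portion of $\widehat V$ (containing $B$, the trapezoids $T_0,\dots,T_{i-1}$ and the inner half of $T_i$) and second block $\overline T_i$ equal to the outer half of $T_i$; the facilitating event for the second block is that a suitably translated copy $\overline B$ of the base, located at the inner boundary of $\overline T_i$, is super good. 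This event has probability at least $1/\s$. When it occurs, $\overline B\cup \overline T_i$ forms a snail $\overline V$ whose parameters match those of $\widehat V^{\sharp}$ up to translation, so the conditional variance on $\overline T_i$ is controlled by $\g_{\overline V}\le \g_{\widehat V^{\sharp}}$, while the first-block contribution is bounded by $\g_{\widetilde V}\le \g_{\widehat V^{\sharp}}$.

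The main obstacle will be the geometric bookkeeping required to ensure that every snail arising in the recursion remains relevant in the sense of Definition \ref{def:relevant}. Conditions \ref{item:r:difference} and \ref{item:L:difference} of that definition are tailored for precisely this purpose: they allow the small drift in $L$ and in the lower $r_j$'s introduced each time a base is shifted to abut $\overline T_i$ to accumulate over $O(\log L)$ reduction steps without leaving the class of snails of type $i$ (or, after $T_i$ is exhausted, of type $i-1$). Combining the two lemmas by a double induction, outer on the index of the outermost non-empty trapezoid and inner on its height, starting from $i=2k-1$ and working inward, produces $O(\log L)$ bisection steps and $2k=O(1)$ one-slice steps in total. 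Collecting the factors $2\s$ from each bisection and $q^{-O(w^4)}$ from each one-slice step, and invoking $\g_{\widehat B}\le\G$ at the base of the recursion, gives the claim $\g_V\le (q^{-w^4}\s)^{O(\log L)}\G$.
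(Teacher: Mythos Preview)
Your overall strategy is essentially the paper's: repeatedly bisect the outermost non-empty trapezoid until it has $O(1)$ slices, then peel those off, and iterate inward; this is exactly Corollary~\ref{cor:trapezoid} applied to each coordinate of $\ur$. The accounting of $O(\log L)$ bisection factors of $\s$ and $O(1)$ one-slice factors of $q^{-O(w^4)}$ is also correct.

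There is, however, a genuine gap in your one-slice lemma. You claim that once $\SG(\widehat V')$ holds and $w$ consecutive infections lie next to $x$ inside $ST_{i,1}$, the update at $x$ is legal for the $\cU$-KCM. This is false: $\SG(\widehat V')$ is a \emph{global} event (helping sets scattered through $\widehat V'$, plus an infected annulus far away) and says nothing about the state of the few sites of $\widehat V'$ adjacent to $x$. The quasi-stable rule witnessing $c_x$ requires infection in $\bbH_{u_{k+i}}$ near $x$, i.e.\ inside $\widehat V'$, and that infection is simply not there in general. The paper (Lemma~\ref{prop:slice}, see Figure~\ref{fig:slice}) fixes this with an \emph{additional} two-block step at each site $x$: first block $\widetilde V$, second block $\{x\}$, facilitating event $\widetilde\cH_x=\{\text{a local trapezoid }\L_x\subset\widetilde V\text{ of size }O(w^4)\text{ is fully infected}\}$. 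Harris gives $\mu_{\widetilde V}(\widetilde\cH_x\mid\SG(\widetilde V))\ge q^{O(w^4)}$, and \emph{this} is the origin of the $q^{-O(w^4)}$ factor in your bound. On $\cH_x\cap\widetilde\cH_x$ one does have $c_x^{\widehat V}=1$; the price is an extra $\var_{\widetilde V}(f\mid\SG(\widetilde V))$ term, which produces the $\g_{\widetilde V}$ (and, after summing over $x$, the $|ST_{i,1}|$ overcount absorbed into $q^{-O(w^4)}$).

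A smaller point on the bisection: $\overline V$ does \emph{not} match $\widehat V^\sharp$ up to translation. The shift $x$ changes all the $\overline r_j$ for $j<i$ and possibly $\overline L$ (this is the content of Claim~\ref{claim:shift}). You acknowledge this drift later, but your bisection inequality $\g_{\widehat V}\le 2\s\,\g_{\widehat V^\sharp}$ is not well-posed as stated; the correct formulation, as in the paper's Corollary~\ref{cor:trapezoid}, is to induct on the height $\widehat r_i$ over \emph{all} snails of type $i$, so that both $\widetilde V$ and $\overline V$ (which have $\widetilde r_i,\overline r_i\le 2\widehat r_i/3$ but otherwise different parameters) fall under the induction hypothesis.
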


\begin{figure}
\centering
\begin{tikzpicture}[line cap=round,line join=round,>=triangle 45,x=0.17cm,y=0.17cm]
\clip(-71,-1) rectangle (1,26);
\fill[fill=black,fill opacity=0.5] (-0.41,1) -- (0.41,1) -- (1,0.41) -- (1,-0.41) -- (0.41,-1) -- (-0.41,-1) -- (-1,-0.41) -- (-1,0.41) -- cycle;
\fill[fill=white,fill opacity=1.0] (-0.25,0.6) -- (0.25,0.6) -- (0.6,0.25) -- (0.6,-0.25) -- (0.25,-0.6) -- (-0.25,-0.6) -- (-0.6,-0.25) -- (-0.6,0.25) -- cycle;
\fill[fill=black,fill opacity=0.5] (-69.85,1) -- (-70.41,1) -- (-71,0.41) -- (-71,-0.41) -- (-70.41,-1) -- (-69.85,-1) -- (-70.6,-0.25) -- (-70.6,0.25) -- cycle;
\fill[fill=black,fill opacity=0.5] (-17.5,18.91) -- (-22.58,18.91) -- (-8.61,4.94) -- (-8.61,10.03) -- cycle;
\draw (-0.41,1)-- (0.41,1);
\draw (0.41,1)-- (1,0.41);
\draw (1,0.41)-- (1,-0.41);
\draw (1,-0.41)-- (0.41,-1);
\draw (0.41,-1)-- (-0.41,-1);
\draw (-0.41,-1)-- (-1,-0.41);
\draw (-1,-0.41)-- (-1,0.41);
\draw (-1,0.41)-- (-0.41,1);
\draw (-0.25,0.6)-- (0.25,0.6);
\draw (0.25,0.6)-- (0.6,0.25);
\draw (0.6,0.25)-- (0.6,-0.25);
\draw (0.6,-0.25)-- (0.25,-0.6);
\draw (0.25,-0.6)-- (-0.25,-0.6);
\draw (-0.25,-0.6)-- (-0.6,-0.25);
\draw (-0.6,-0.25)-- (-0.6,0.25);
\draw (-0.6,0.25)-- (-0.25,0.6);
\draw (-0.6,0.25)-- (-0.6,-0.25);
\draw (-0.6,-0.25)-- (-0.25,-0.6);
\draw (-0.25,-0.6)-- (0.25,-0.6);
\draw (0.25,-0.6)-- (0.6,-0.25);
\draw (0.6,-0.25)-- (0.6,0.25);
\draw (0.6,0.25)-- (0.25,0.6);
\draw (0.25,0.6)-- (-0.25,0.6);
\draw (-0.25,0.6)-- (-0.6,0.25);
\draw (-70.41,1)-- (-71,0.41);
\draw (-71,0.41)-- (-71,-0.41);
\draw (-70.41,-1)-- (-71,-0.41);
\draw (-70.41,1)-- (-45.62,25.79);
\draw (-45.62,25.79)-- (-24.38,25.79);
\draw (-24.38,25.79)-- (0.41,1);
\draw (0.41,1)-- (-70.41,1);
\draw (-70.41,-1)-- (-0.41,-1);
\draw (-69.85,1)-- (-70.41,1);
\draw (-70.41,1)-- (-71,0.41);
\draw (-71,0.41)-- (-71,-0.41);
\draw (-71,-0.41)-- (-70.41,-1);
\draw (-70.41,-1)-- (-69.85,-1);
\draw (-69.85,-1)-- (-70.6,-0.25);
\draw (-70.6,-0.25)-- (-70.6,0.25);
\draw (-70.6,0.25)-- (-69.85,1);
\draw (-71,0.41)-- (-71,-0.41);
\draw (-71,-0.41)-- (-70.41,-1);
\draw (-70.41,-1)-- (0.41,-1);
\draw (0.41,-1)-- (1,-0.41);
\draw (1,-0.41)-- (1,0.41);
\draw (1,0.41)-- (-24.38,25.79);
\draw (-24.38,25.79)-- (-45.62,25.79);
\draw (-45.62,25.79)-- (-71,0.41);
\draw (-22.68,25.79)-- (1,2.11);
\draw (-17.5,18.91)-- (-22.58,18.91);
\draw (-22.58,18.91)-- (-8.61,4.94);
\draw (-8.61,4.94)-- (-8.61,10.03);
\draw (-8.61,10.03)-- (-17.5,18.91);
\begin{scriptsize}
\draw (-11,16.5) node [scale=1.5] {$x$};
\draw (-60,21) node [scale=1.5] {$\widetilde V$};
\draw (-1,8.32) node [scale=1.5] {$\L_x$};
\draw (-10,22.32) node [scale=1.5] {$\widehat{ST}_{i,1}$};
\draw[decorate,decoration={brace,amplitude=3pt}] (-10.7,15.2)--(-8.3,12.8) node [black,midway,xshift=7,yshift=7, scale=1.5] {$w$};
\end{scriptsize}
\draw (-12.21,15.32) node[cross=2pt,rotate=45] {};
\fill (-11.21,14.32) circle (2pt);
\fill (-10.21,13.32) circle (2pt);
\fill (-9.21,12.32) circle (2pt);
\draw [->] (-57,21)--(-51,21);
\draw [->] (-15,22.32)--(-19.21,22.32);
\draw [->] (-3,8.32)--(-8.61,8.32);
\end{tikzpicture}
    \caption{The geometric setting of Lemma \ref{prop:slice}. The snail is $\widetilde V$, while $\widehat{ST}_{i,1}=\widehat V\setminus\widetilde V$ is the remaining slice on the top-right. The site $x$ to be updated in \eqref{eq:generic:term} is marked by a cross. The event $\widetilde\cH_x$ corresponds to the shaded trapezoid $\L_x$ being infected and the event $\cH_x$ corresponds to the $w$ consecutive sites next to $x$ on one of its sides being infected.}
    \label{fig:slice}
\end{figure}
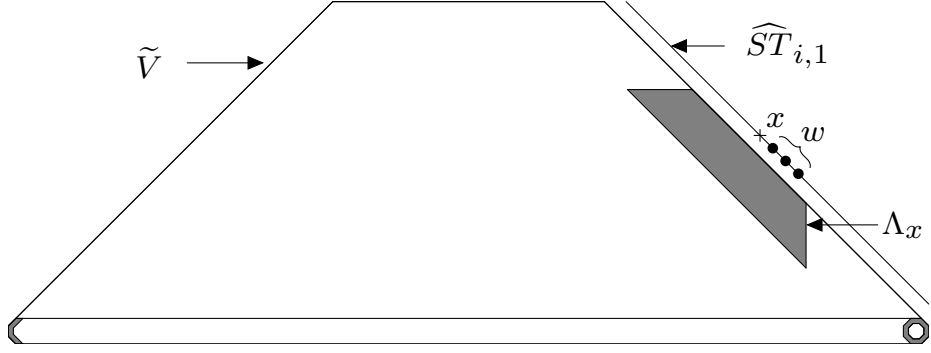

In the rest of the section we slowly build the proof of this proposition. The first step of reduction consists in removing a trapezoid consisting of a single slice. This is done using Lemma \ref{lem:FAperiodic} and may be intuitively understood as an FA1f dynamics of $w$ consecutive infected sites in the slice. Recall the definition of $\rho_i$ from Section \ref{sec:stable-dir}.
\begin{lem}[Removing a single slice]
\label{prop:slice}
Let $\widehat V=V^R_{\widehat L}(\widehat\ur)$ be a snail of type $i$ such that $\widehat r_i=\l\r_{k+i}$ for $\l\in\bbN$. In other words, the last non-empty trapezoid, $\widehat T_i$ of $\widehat V$ consists of $\l$ segments orthogonal to $u_{i+k}$. Then, setting $\widetilde \ur=(\widehat r_0,\dots,\widehat r_{i-1},0,\dots,0)$, $\widetilde V=V^R_{\widehat L}(\widetilde\ur)$, we have

\[\g_{\widehat V}\le \left(q^{-w^4}\right)^{O(\l)}\max\left(\g_{\widetilde V},1/\mu_{\widehat V}(\SG(\widehat V))\right).\]
\end{lem}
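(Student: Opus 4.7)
The plan is to proceed by induction on $\l$, peeling off the outermost slice $ST_{i,\l}$ of the trapezoid $\widehat T_i$ at each step. For $\l=0$ one has $\widehat V=\widetilde V$ and the bound is trivial. For the inductive step, set $V'=\widehat V\setminus ST_{i,\l}$ (a snail of the same type $i$ with one fewer slice in its last trapezoid). I would aim to establish the one-step inequality
\[
\g_{\widehat V}\le q^{-O(w^4)}\max\bigl(\g_{V'},\,1/\mu_{\widehat V}(\SG(\widehat V))\bigr),
\]
which upon iteration yields the statement. Indeed, by Remark \ref{rem:prod} one has $\mu_{\widehat V}(\SG(\widehat V))=\mu_{V'}(\SG(V'))\cdot\mu_{ST_{i,\l}}(\widehat\ST_{i,\l})\le\mu_{V'}(\SG(V'))$, so after $\l$ iterations the intermediate $1/\mu(\SG)$ factors are all dominated by $1/\mu_{\widehat V}(\SG(\widehat V))$.

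The main tool I would use is Lemma \ref{lem:2block} applied with $X_1$ the configuration on $V'$ distributed as $\mu_{V'}(\cdot\tc\SG(V'))$ and $X_2$ the configuration on $ST_{i,\l}$ distributed as $\mu_{ST_{i,\l}}(\cdot\tc\widehat\ST_{i,\l})$; these are independent by the factorisation in Remark \ref{rem:prod}. I would choose the facilitating event $\cH\subset\bbX_1$ to require a $\Theta(w)$-long fully infected patch in the portion of $V'$ adjacent to $ST_{i,\l}$ (lying in $ST_{i,\l-1}$ when $\l\ge 2$ or in the $\widetilde V$-boundary adjacent to slice $1$ when $\l=1$). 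Its probability, after accounting for the conditioning on $\SG(V')$, is comparable to $\mu_{\widehat V}(\SG(\widehat V))$ up to constants depending only on $\cU$ and $w$, so that $\bbP(X_1\in\cH)^{-1}$ is absorbed inside the $\max$ in the target bound.

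Lemma \ref{lem:2block} produces two terms. The first, $\bbE(\var_{V'}(f\tc\SG(V')))$, is controlled by the definition of $\g_{V'}$ and the monotonicity $c_x^{V'}\le c_x^{\widehat V}$, yielding $\g_{V'}$ times the $\cU$-KCM Dirichlet form on $V'$. The second term, $\bbE(\1_\cH\var_{ST_{i,\l}}(f\tc\widehat\ST_{i,\l}))$, is the technical core: conditional on $\cH$, I would apply a one-dimensional FA1f-type Poincar\'e inequality (a direct variant of Lemma \ref{lem:FAperiodic}) along the $O(r_{i-1})$ sites of the slice, with blocks of size $\Theta(w)$, FA1f parameter $\k=\Theta(w^3)$, and a single-block event of probability $q^{\Theta(w)}$. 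Each single-block variance is then converted into a sum of single-site $\cU$-KCM Dirichlet terms $\mu(c_x^{\widehat V}\var_x(f))$ by exploiting the helping pattern guaranteed by $\cH$ and iteratively sliding it along the slice via repeated applications of Lemma \ref{lem:strip}\ref{item:w:consecutive}. The total cost of this conversion is $(q^{-\Theta(w)})^{O(w^3)}=q^{-O(w^4)}$, matching the claimed prefactor.

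The main obstacle is precisely the in-slice analysis described above: one must verify that the fully-infected patch supplied by $\cH$ can be transported across the entire slice $ST_{i,\l}$ through iterated applications of Lemma \ref{lem:strip}\ref{item:w:consecutive}, each shift of the patch unlocking the $\cU$-KCM constraint $c_x^{\widehat V}=1$ at a fresh site. Balancing the block size, the FA1f parameter $\k$, and the probability of $\cH$ so that $\bbP(X_1\in\cH)^{-1}$ remains within the target $\max$ while every site of $ST_{i,\l}$ still admits a legal $\cU$-KCM update is the delicate combinatorial and geometric design step that produces the specific $q^{-O(w^4)}$ factor.
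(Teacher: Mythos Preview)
Your induction on $\l$ and the overall plan of peeling off the outermost slice are correct and coincide with the paper's. The genuine gap is in the order in which you invoke Lemma~\ref{lem:2block} and the FA1f inequality, and this order matters.

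You apply Lemma~\ref{lem:2block} once, with $X_1=\o_{V'}$, $X_2=\o_{ST_{i,\l}}$, and a facilitating event $\cH$ asking for a single $\Theta(w)$-patch of infection at some \emph{fixed} position in $V'$. After this step the configuration on $V'$ is frozen, and you are left to reduce $\1_\cH\var_{ST_{i,\l}}(f\tc\widehat\ST_{i,\l})$ to $\cU$-KCM Dirichlet terms without touching $V'$ again. Your FA1f step produces terms $\1_{\cH_j}\var_{I_j}(f)$ with $\cH_j$ an in-slice event. But to have $c_x^{\widehat V}=1$ for $x\in I_j$ one needs infection in $V'$ \emph{adjacent to $x$}: the update rules have diameter $O(1)$, so a far-away fixed patch is of no help. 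Your proposed ``sliding'' via Lemma~\ref{lem:strip}\ref{item:w:consecutive} is a bootstrap-percolation statement and cannot move actual infection inside a frozen configuration; there is no Poincar\'e mechanism available at this stage to transport the patch without resampling $V'$.

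The paper reverses the order. It first applies Lemma~\ref{lem:FAperiodic} to the slice (with $\k=w$ and single-site event $\{0\}$, at cost $q^{-O(w)}$), producing terms $\mu_{\widehat V}(\1_{\cH_x}\var_x(f)\tc\SG(\widetilde V))$ for each $x$ in the slice, where $\cH_x$ is the in-slice event of $w$ consecutive infections next to $x$. Then, \emph{for each such $x$ separately}, it enlarges the variance to $\widetilde V\cup\{x\}$ via Lemma~\ref{lem:var:conv} and applies Lemma~\ref{lem:2block} with $X_1=\o_{\widetilde V}$, $X_2=\o_x$, and a site-dependent facilitating event $\widetilde\cH_x$ that a small trapezoid $\L_x\subset\widetilde V$ of volume $O(w^4)$, placed directly below $x$, is fully infected. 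By Harris the cost is $q^{-O(w^4)}$, and now $\cH_x\cap\widetilde\cH_x$ does imply $c_x^{\widehat V}=1$ locally. Each such application produces a fresh $\var_{\widetilde V}(f\tc\SG(\widetilde V))$ term, bounded by $\g_{\widetilde V}$ times the Dirichlet form; summing over $x$ gives an extra harmless factor $|\widehat{ST}_{i,1}|=O(q^{-4w})$. The key idea you are missing is that the infected patch in $\widetilde V$ is not fixed once and for all: it is requested anew, at the correct position, for every site $x$, at the price of resampling $\widetilde V$ (hence paying $\g_{\widetilde V}$) each time.

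A minor point: your claim that $\bbP(X_1\in\cH)$ is ``comparable to $\mu_{\widehat V}(\SG(\widehat V))$'' is not right---the former is $q^{\Theta(w)}$, the latter $q^{\Theta(Rw)}$---but the inequality you actually need, namely $\bbP(\cH)^{-1}\le q^{-O(w^4)}$, does hold, so this is only cosmetic.
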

\begin{proof}[Proof of Lemma \ref{prop:slice}]
By induction on $\l$ it suffices to prove the lemma for $\l=1$, in which case the last trapezoid is simply $\widehat T_i=\widehat{ST}_{i,1}$. 

We will proceed in two steps. First, we will divide $\widehat V$ into $\widetilde V$ and $\widehat{ST}_{i,1}$. The $\widetilde V$ part is harmless, as it directly relates to $\g_{\widetilde V}$ appearing in the r.h.s.\ of the statement of the lemma. In order to reproduce a `resampling' of $\widehat{ST}_{i,1}$ we will proceed in two steps. First, using the FA1f-like dynamics, Lemma \ref{lem:FAperiodic}, we will reduce the problem to resampling a single site in $\widehat{ST}_{i,1}$ given that next to it there are $w$ consecutive infections. Then we will use $\widetilde V$ to provide additional infections to ensure that $c_x^{\widehat V}$ is satisfied and this will yield the $x$ term of the Dirichlet form from \eqref{eq:Dirichlet}. The lemma is illustrated in Figure \ref{fig:slice}.

Recalling Remark \ref{rem:prod}, for any $f:\O_{\widehat V}\to \bbR$ Lemma~\ref{lem:var:conv} gives
  \begin{equation}
   \label{eq:9bis}
    \var_{\widehat V}(f\tc \SG(\widehat V)) \le \mu_{\widehat V}\left(\var_{\widetilde V}(f\tc \SG(\widetilde V)) +
   \var_{\widehat{ST}_{i,1}}(f\tc \widehat \ST_{i,1})\tc \SG(\widehat V)\right). 
  \end{equation}
Since $\SG(\widehat V)=\SG(\widetilde V)\times\widehat{ST}_{i,1}$, the first term in the r.h.s.\ above is
\begin{align*}
\mu_{\widehat{ST}_{i,1}}\left(\var_{\widetilde V}(f\tc \SG(\widetilde V))\tc \widehat \ST_{i,1}\right)&\le \frac{\g_{\widetilde V}}{\mu_{\widehat{ST}_{i,1}}(\widehat\ST_{i,1})}\mu_{\widehat V}\left(\sum_{x\in\widetilde V}c^{\widehat  V}_x\var_x(f)\right)\\
&=(1+o(1))\g_{\widetilde V}\sum_{x\in\widetilde V}\m_{\widehat V}(c_x^{\widehat V}\var_x(f))
\end{align*}
by the definition \eqref{eq:3} of $\g_{\widetilde V}$, Observation \ref{obs:prob:G}, and the fact that $c^{\widetilde V}_x\le c_x^{\widehat V}$. To bound the second term in \eqref{eq:9bis}, we use Lemma \ref{lem:FAperiodic} for $\mu_{\widehat{ST}_{i,1}}(\cdot\tc \widehat \ST_{i,1}) $ with $\k=w$ and constraining event $\cH=\{0\}\subset \{0,1\}=\widehat S$, the hypothesis of the lemma following from Observation \ref{obs:prob:G}. This gives
\begin{equation}
\label{eq:100}  \var_{\widehat{ST}_{i,1}}(f\tc \widehat \ST_{i,1})\le 
q^{-O(w)}\sum_{x\in \widehat{ST}_{i,1}}\mu_{\widehat{ST}_{i,1}}(\1_{\cH_x}\var_x(f)),
\end{equation}
where $\cH_x$ is the event that $w$ consecutive sites immediately to the left or to the right of $x$ in $\widehat{ST}_{i,1}$ are infected. Plugging this back in \eqref{eq:9bis}, we see that we need to bound from above a generic term 
\begin{equation}
\label{eq:generic:term}\mu_{\widehat V}\left(\1_{\cH_x}\var_x(f)\tc \SG(\widetilde V)\right),\quad x\in \widehat{ST}_{i,1}.
\end{equation}

At this point we have succeeded in bringing $w$ consecutive infected sites next to the site $x$, which we want to update. In order to be sure that the constraint $c_x^{\widehat V}$ is satisfied, we would like to also bring some infections next to $x$ in $\widetilde V$. To do that we first use Lemma \ref{lem:var:conv} to include $\widetilde V$ in the variance, so that we are allowed to `resample' it and then use the two-block dynamics, Lemma \ref{lem:2block}, to indeed obtain the desired infections by resampling $\widetilde V$ enough times.

Applying Lemma~\ref{lem:var:conv} to $\L_1=\widetilde V$, $\L_2=\{x\}$, $\L_3=\varnothing$, $\nu_1=\mu_{\widetilde V}(\cdot\tc \SG(\widetilde V))$, and $ \n_2=\mu_x$, we bound the generic term \eqref{eq:generic:term} from above by 
\[
  \mu_{\widehat{ST}_{i,1}\setminus\{x\}}\left(\1_{\cH_x}\var_{\widetilde V\cup \{x\}}(f\tc
  \SG(\widetilde V))\right).
\]
We next apply Lemma \ref{lem:2block} to the product space
\[(\O_{\widetilde V},\mu_{\widetilde V}(\cdot\tc G_{\widetilde V}))\otimes(\O_{\{x\}},\mu_x)\]
with constraining event $\widetilde\cH_x\subset \O_{\widetilde V}$ that the trapezoid 
\[\L_x=x+\left(\overline \bbH_{u_{i+k-1}}(w^2)\cap\bbH_{u_{i+k}}\cap\overline \bbH_{u_{i+k+1}}(w^2)\cap\bbH_{u_{i+3k}}(w)\right)\cap\widetilde V\]
being infected. It is not hard to check that $\partial_{u_{i+k}}\L_x$ contains $x$ and the $w$ infected sites guaranteed by $\cH_x$. In other words, we are in the setting of Figure~\ref{fig:slice}. Using $|\L_x\cap \bbZ^2|=O(w^4)$ and noticing that by the Harris inequality~\cite{Harris60} $\mu_{\widetilde V}(\widetilde\cH_x\tc \SG(\widetilde V))\ge \mu_{\widetilde V}(\widetilde\cH_x),$ Lemma \ref{lem:2block} gives 
\begin{multline*}
  \1_{\cH_x}\var_{\widetilde V\cup \{x\}}(f\tc \SG(\widetilde V))\le
    q^{-O(w^4)} \mu_{\widetilde V\cup\{x\}}\big(\var_{\widetilde V}(f\tc
  \SG(\widetilde V))\\
  +\1_{\cH_x\cap \widetilde\cH_x}\var_x(f)\tc \SG(\widetilde V)\big).
\end{multline*}
Finally, since $u_{i+k}$ is an isolated (quasi-)stable direction, it is easily seen (see Figure \ref{fig:slice} and Lemma \ref{lem:strip}) that $\1_{\cH_x\cap \widetilde\cH_x}\le c^{\widehat V}_x$. Recalling the definition of the Poincar\'e constant $\g_{\widehat V}$ (see \eqref{eq:3}), we conclude that
\begin{multline*}
\mu_{\widehat{ST}_{i,1}\setminus\{x\}}\left(\1_{\cH_x}\var_{\widetilde V\cup \{x\}}(f\tc\SG(\widetilde V))\right)\\
\le q^{-O(w^4)} \left(\g_{\widetilde V}+1/\mu(\SG(\widetilde V))\right)\sum_{y\in \widetilde V\cup \{x\}}\mu_{\widehat V}(c^{\widehat V}_y\var_y(f)).
\end{multline*}

Putting all together, we finally get 
\begin{multline*}
\var_{\widehat V}(f\tc \SG(\widehat V))\le |\widehat{ST}_{i,1}|q^{-O(w^4)}
\max\left(\g_{\widetilde V},1/\mu_{\widetilde V}(\SG(\widetilde V))\right)\\
\times\sum_{x\in \widehat V}\mu_{\widehat V}\left(c^{\widehat V}_x \var_x(f)\right),
\end{multline*}
where the factor $|\widehat{ST}_{i,1}|=O(\widehat L)=O(q^{-4w})$ comes from the fact that each vertex $x\in \widehat{ST}_{i,1}$
produces a term of the form $\sum_{y\in \widetilde V}\mu_{\widehat V}(c^{\widehat V}_y \var_y(f))$. \end{proof}

The remaining induction step allows us to reduce the size of the last non-empty trapezoid $\widehat T_i$ twice.
The proof is illustrated in Figure~\ref{fig:shift}.

\begin{lem}[Bisection of a trapezoid]
\label{prop:bisection}
Let $\widehat V=V^R_{\widehat L}(\widehat \ur)$ be a snail of type $i$ such that $\widehat r_i$ is larger than some sufficiently large constant. Let $\l=\min\{\ell>0,\ell u_{i+1}\in\bbZ^2\}=O(1)$ and let $x=u_{i+1}\l\lfloor \widehat r_i/(2\l\<u_{i+k},u_{i+1}\>)\rfloor$. With this choice $\<u_{i+k},x\>\simeq \widehat r_i/2$. In other words, $x$ is the vector by which the ring should be translated so that half of the last trapezoid, $\widehat  T_i$, remains above it (see Figure \ref{fig:shift}). Then we set:
\begin{align*}
\widetilde \ur&{}=(\widehat r_0,\dots,\widehat r_{i-1},\<u_{i+k},x\>,0,\dots,0),\\
\overline \ur&{}=(\widehat r_0-\<u_k,x\>,\dots,\widehat r_{i}-\<u_{k+i},x\>,0,\dots,0),\\
\overline L&{}=\min\left(\widehat L,\widehat L-\frac{\<u_{k-1},x\>}{\<u_{k-1},u_0\>}\right),\\
\widetilde V&{}=V^R_{\widehat L}(\widetilde \ur),\\
\overline V&{}=x+V^R_{\overline L}(\overline \ur).
\end{align*}
In words, $\widetilde V$ is $\widehat V$ with half of $\widehat T_i$ removed, while $\overline V$ is the snail such that its last trapezoid $\overline T_i$ is exactly that missing half, but with length eventually shortened, so that $\overline V$ fits inside $\widehat V$ (see Figure \ref{fig:shift}). With these notations,
\[\g_{\widehat V}\le\g_{\widetilde V}/\mu(G_{\overline V})+\g_{\overline V}/\mu(G_{\widetilde V})\]
and $\widetilde V$ and $\overline V$ are snails of type $i$.
\end{lem}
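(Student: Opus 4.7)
The plan is to apply the two-block Poincar\'e inequality (Lemma~\ref{lem:2block}) with first block $\widetilde V$ and second block $\overline T_i := \widehat V \setminus \widetilde V$ (the upper half of $\widehat T_i$), using the constraining event $\cH := \SG(\overline B) \subset \O_{\widetilde V}$, where $\overline B := \overline V \setminus \overline T_i \subset \widetilde V$. By Remark~\ref{rem:prod} the conditional measure factorises as $\mu_{\widehat V}(\cdot \tc \SG(\widehat V)) = \mu_{\widetilde V}(\cdot\tc \SG(\widetilde V))\otimes \mu_{\overline T_i}(\cdot\tc \cG(\overline T_i))$. Both $\SG(\overline B)$ and $\SG(\widetilde V)$ being decreasing events in $\omega$, the Harris inequality~\cite{Harris60} yields $\mu_{\widetilde V}(\cH\tc \SG(\widetilde V)) \ge \mu(\SG(\overline B)) = (1-o(1))\mu(\SG(\overline V))$, using Observation~\ref{obs:prob:G}. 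Lemma~\ref{lem:2block} then gives
\[\var_{\widehat V}(f\tc \SG(\widehat V)) \le \frac{O(1)}{\mu(\SG(\overline V))}\bigl(\mathrm{I} + \mathrm{II}\bigr),\]
with $\mathrm{I} = \mu_{\widehat V}(\var_{\widetilde V}(f\tc\SG(\widetilde V))\tc \SG(\widehat V))$ and $\mathrm{II} = \mu_{\widehat V}(\1_\cH \var_{\overline T_i}(f\tc\cG(\overline T_i))\tc \SG(\widehat V))$.

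The first term is routine: applying Poincar\'e for $\widetilde V$ pointwise in $\omega_{\overline T_i}$, using $c^{\widetilde V}_x \le c^{\widehat V}_x$, and removing the harmless outer conditioning on $\cG(\overline T_i)$, I obtain $\mathrm{I} \le (1+o(1))\,\gamma_{\widetilde V}\sum_{x\in\widetilde V}\mu(c^{\widehat V}_x \var_x(f))$, which yields the $\gamma_{\widetilde V}/\mu(\SG(\overline V))$ contribution of the claim.

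The second term is the heart of the argument. I would begin by dropping the outer conditioning via $\mathrm{II} \le \mu_{\widetilde V}(\1_\cH \var_{\overline T_i}(f\tc \cG(\overline T_i)))/\mu(\SG(\widetilde V))$, which is legitimate since the integrand is non-negative. Crucially, the \emph{unconditional} product measure $\mu_{\widetilde V}$ factorises over $\widetilde V = \overline B \sqcup (\widetilde V\setminus\overline B)$, even though $\mu_{\widetilde V}(\cdot\tc \SG(\widetilde V))$ does not (the event $\SG(\widetilde V)$ is adapted to the trapezoid decomposition of $\widetilde V$, which is transverse to $\overline B$). Since $\1_\cH$ depends only on $\omega_{\overline B}$, Fubini factors out $\mu(\SG(\overline B))$ and leaves $\mu_{\overline B}(\var_{\overline T_i}(f\tc\cG(\overline T_i))\tc\SG(\overline B))$ under the remaining outer integration. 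By the total variance formula applied to the factorisation $\mu_{\overline V}(\cdot\tc\SG(\overline V)) = \mu_{\overline B}(\cdot\tc\SG(\overline B))\otimes\mu_{\overline T_i}(\cdot\tc\cG(\overline T_i))$, this inner quantity is dominated by $\var_{\overline V}(f\tc\SG(\overline V))$, to which Poincar\'e for $\overline V$ applies and produces $\gamma_{\overline V}\sum_{y\in\overline V}\mu(c^{\overline V}_y \var_y(f))$. Using $c^{\overline V}_y \le c^{\widehat V}_y$ and combining with the factors $\mu(\SG(\overline B))/\mu(\SG(\widetilde V))$ and $1/\mu(\SG(\overline V))$, together with $\mu(\SG(\overline B)) = (1-o(1))\mu(\SG(\overline V))$, gives the $\gamma_{\overline V}/\mu(\SG(\widetilde V))$ contribution.

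Finally, I must verify that $\widetilde V$ and $\overline V$ are of type $i$ in the sense of Definition~\ref{def:relevant}. Conditions~\ref{item:type:i} and~\ref{item:ri:smaller} are immediate from the definitions of $\widetilde\ur$ and $\overline\ur$; for~\ref{item:r:difference} and~\ref{item:L:difference}, the additional deviations introduced by passing from $\widehat V$ to $\overline V$ are $O(|x|) = O(\widehat r_i)$, which is comparable to $r_i - \overline r_i \approx \widehat r_i/2$, so choosing $C$ in Definition~\ref{def:relevant} large enough absorbs them; translation invariance handles the shift of $\overline V$ by $x$. The main technical obstacle is the second term: one must carefully exploit the unconditional product structure of $\mu_{\widetilde V}$ on $\overline B \sqcup (\widetilde V\setminus\overline B)$, transverse to the trapezoid decomposition used for $\SG(\widetilde V)$, in order to integrate out $\1_\cH$ and reduce the constrained second-block variance to the full Poincar\'e inequality on $\overline V$ via total variance.
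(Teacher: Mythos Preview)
Your approach is essentially the same as the paper's: apply Lemma~\ref{lem:2block} with blocks $\widetilde V$ and $\overline T_i=\widehat V\setminus\widetilde V$, facilitating event $\SG(\overline V_{i-1})$, drop the conditioning on $\SG(\widetilde V)$ in the second term, and use Lemma~\ref{lem:var:conv} (total variance) to enlarge $\var_{\overline T_i}(\cdot\tc\cG(\overline T_i))$ under $\mu_{\overline V_{i-1}}(\cdot\tc\SG(\overline V_{i-1}))$ to $\var_{\overline V}(\cdot\tc\SG(\overline V))$, to which the Poincar\'e constant $\g_{\overline V}$ applies. Two minor caveats: what you call $\overline B$ is not the base of $\overline V$ but the snail $\overline V_{i-1}=\overline V\setminus\overline T_i$ (which still has the trapezoids $\overline T_0,\dots,\overline T_{i-1}$), and the corresponding super good event is $\SG(\overline V_{i-1})=\SG(\overline B)\cap\bigcap_{j<i}\cG(\overline T_j)$; and you use but do not verify the geometric facts $\widetilde V\cup\overline V=\widehat V$, $\widehat V\setminus\widetilde V=\overline T_i$, and the admissibility of $\overline\ur$ (in particular $\overline r_j\ge 0$ and $\overline r_j\le\d\overline r_{j-1}$), which the paper isolates into a separate claim.
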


\begin{figure}
\begin{tikzpicture}[line cap=round,line join=round,x=0.15cm,y=0.15cm, scale=1]
\clip(-72,-2) rectangle (4,26);
\draw [thick] (-0.41,1) -- (0.41,1) -- (1,0.41) -- (1,-0.41) -- (0.41,-1) -- (-0.41,-1) -- (-1,-0.41) -- (-1,0.41) -- cycle;
\draw [thick] (-0.25,0.6) -- (0.25,0.6) -- (0.6,0.25) -- (0.6,-0.25) -- (0.25,-0.6) -- (-0.25,-0.6) -- (-0.6,-0.25) -- (-0.6,0.25) -- cycle;
\draw [thick] (-69.85,1) -- (-70.41,1) -- (-71,0.41) -- (-71,-0.41) -- (-70.41,-1) -- (-69.85,-1) -- (-70.6,-0.25) -- (-70.6,0.25) -- cycle;
\draw[thick,pattern=dots] (-45.62,25.79) -- (-6.61,25.79) -- (1,18.18) -- (1,9.59) -- (0.41,9) -- (-60.41,9) -- (-61,9.59) -- (-61,10.41) -- cycle;
\draw[thick, pattern=my north east lines] (1,-0.41) -- (0.41,-1) -- (-70.41,-1) -- (-71,-0.41) -- (-71,0.41) -- (-45.62,25.79) -- (-14.38,25.79) -- (1,10.41) -- cycle;
\draw[thick, opacity=1] (-14.38,25.79) -- (-6.61,25.79) -- (1,18.18) -- (1,10.41) -- cycle;
\draw (-0.41,1)-- (0.41,1);
\draw (0.41,1)-- (1,0.41);
\draw (1,0.41)-- (1,-0.41);
\draw (1,-0.41)-- (0.41,-1);
\draw (0.41,-1)-- (-0.41,-1);
\draw (-0.41,-1)-- (-1,-0.41);
\draw (-1,-0.41)-- (-1,0.41);
\draw (-1,0.41)-- (-0.41,1);
\draw (-0.25,0.6)-- (0.25,0.6);
\draw (0.25,0.6)-- (0.6,0.25);
\draw (0.6,0.25)-- (0.6,-0.25);
\draw (0.6,-0.25)-- (0.25,-0.6);
\draw (0.25,-0.6)-- (-0.25,-0.6);
\draw (-0.25,-0.6)-- (-0.6,-0.25);
\draw (-0.6,-0.25)-- (-0.6,0.25);
\draw (-0.6,0.25)-- (-0.25,0.6);
\draw (-0.6,0.25)-- (-0.6,-0.25);
\draw (-0.6,-0.25)-- (-0.25,-0.6);
\draw (-0.25,-0.6)-- (0.25,-0.6);
\draw (0.25,-0.6)-- (0.6,-0.25);
\draw (0.6,-0.25)-- (0.6,0.25);
\draw (0.6,0.25)-- (0.25,0.6);
\draw (0.25,0.6)-- (-0.25,0.6);
\draw (-0.25,0.6)-- (-0.6,0.25);
\draw (-70.41,1)-- (-71,0.41);
\draw (-71,0.41)-- (-71,-0.41);
\draw (-70.41,-1)-- (-71,-0.41);
\draw (-70.41,1)-- (-45.62,25.79);
\draw (-45.62,25.79)-- (-24.38,25.79);
\draw (-24.38,25.79)-- (0.41,1);
\draw (0.41,1)-- (-70.41,1);
\draw (-70.41,-1)-- (-0.41,-1);
\draw [->, very thick] (0,0) -- (0,10);
\draw (-69.85,1)-- (-70.41,1);
\draw (-70.41,1)-- (-71,0.41);
\draw (-71,0.41)-- (-71,-0.41);
\draw (-71,-0.41)-- (-70.41,-1);
\draw (-70.41,-1)-- (-69.85,-1);
\draw (-69.85,-1)-- (-70.6,-0.25);
\draw (-70.6,-0.25)-- (-70.6,0.25);
\draw (-70.6,0.25)-- (-69.85,1);
\draw (-45.62,25.79)-- (-6.61,25.79);
\draw (-6.61,25.79)-- (1,18.18);
\draw (1,18.18)-- (1,9.59);
\draw (1,9.59)-- (0.41,9);
\draw (0.41,9)-- (-60.41,9);
\draw (-60.41,9)-- (-61,9.59);
\draw (-61,9.59)-- (-61,10.41);
\draw (-61,10.41)-- (-45.62,25.79);
\draw (1,-0.41)-- (0.41,-1);
\draw (0.41,-1)-- (-70.41,-1);
\draw (-70.41,-1)-- (-71,-0.41);
\draw (-71,-0.41)-- (-71,0.41);
\draw (-71,0.41)-- (-45.62,25.79);
\draw (-45.62,25.79)-- (-14.38,25.79);
\draw (-14.38,25.79)-- (1,10.41);
\draw (1,10.41)-- (1,-0.41);
\draw (-14.38,25.79)-- (-6.61,25.79);
\draw (-6.61,25.79)-- (1,18.18);
\draw (1,18.18)-- (1,10.41);
\draw (1,10.41)-- (-14.38,25.79);
\begin{scriptsize}
\draw (3,5) node [scale=1.5] {$x$};
\draw (-70,10) node [scale=1.5] {$\widetilde V$};
\draw (-60,20) node [scale=1.5] {$\overline V_{i-1}$};
\draw (2,22) node [scale=1.5] {$\overline T_i$};
\end{scriptsize}
\draw [->] (-69,7)--(-67,6);
\draw [->] (-68,10)--(-60,13);
\draw [->] (-55,21)--(-51,21);
\draw [->] (-1,22)--(-5,22);
\end{tikzpicture}
\caption{
The geometric setting of Lemma~\ref{prop:bisection}. The snail $\widetilde V$ is hatched, $\overline V$ is dotted and their union is the original snail $\widetilde V$. The dotted-hatched region is $\overline V_{i-1}$, while the dotted trapezoid is $\overline T_i$.} 

\label{fig:shift}
\end{figure}

\begin{proof}[Proof of Lemma \ref{prop:bisection}]
The proof goes as follows. In Claim \ref{claim:shift} we show that the two polygons $\widetilde V$ and $\overline V$ are indeed snails (defined by admissible sequences) of type $i$ and that they do correspond to their informal definitions in the statement of the lemma. Though technical, this claim hides no subtlety and we invite the reader to skip it. Then we apply Lemma \ref{lem:2block} to reduce the problem of relaxation on $\widehat V$ to the one on $\widetilde V$ and on $\overline V$ which yields the desired result. The event $\SG(\widetilde V)$ is implied by $\SG(\widehat V)$ by construction, but the second block, $\overline V$, of the dynamics corresponding to Lemma \ref{lem:2block}, is updated only when the part of $\SG(\overline V)$ witnessed in $\widetilde V$ occurs.

We begin with some geometric observations following directly from Definitions~\ref{def:snail} and \ref{def:ti}.
\begin{claim}
\label{claim:shift}
$\widetilde V$ and $\overline V$ are snails of type $i$. Furthermore, we have $\widetilde V\cup\overline V=\widehat V$ and $\widehat V\setminus\widetilde V=\overline T_i$ (the last trapezoid of $\overline V$).
\end{claim}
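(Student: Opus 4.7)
The plan is to verify three items in order: (i) that the sequences $\widetilde\ur$ and $\overline\ur$ are admissible and that $\overline L$ is a legal length parameter, so that $\widetilde V$ and $\overline V$ really are snails in the sense of Definition~\ref{def:snail}; (ii) that both snails are of type $i$ in the sense of Definition~\ref{def:relevant}; and (iii) the two set identities $\widetilde V \cup \overline V = \widehat V$ and $\widehat V \setminus \widetilde V = \overline T_i$. All three pieces amount to bookkeeping of the half-plane offsets appearing in Definition~\ref{def:snail}, and are underlied by the single geometric observation that $\|x\| = \Theta(\widehat r_i)$ and $\<u_j, x\>$ is controlled by a constant (depending only on $\widehat\cS$) times $\widehat r_i$ for every $j\in[4k]$.

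For admissibility, note that $\widetilde\ur$ coincides with $\widehat\ur$ in indices $<i$, has $\widetilde r_i = \<u_{i+k},x\>\simeq\widehat r_i/2 \le \widehat r_i \le \d\widehat r_{i-1}$, and vanishes beyond $i$, so admissibility is inherited from $\widehat\ur$. For $\overline\ur$ one uses $\overline r_j = \widehat r_j - \<u_{k+j},x\>$ together with $\<u_{k+j},x\> = O(\widehat r_i)$ and the quickly geometric decay $\widehat r_j \ge \d^{j-i}\widehat r_i$ to check $\overline r_j \le \d\overline r_{j-1}$ and $\overline r_0 \le \d \overline L$; the constant $C$ hidden in $\overline L = \min(\widehat L, \widehat L - \<u_{k-1},x\>/\<u_{k-1},u_0\>)$ is absorbed by choosing $\d_0$ small enough. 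Regarding type $i$: conditions \ref{item:type:i} and \ref{item:ri:smaller} are immediate from $\widetilde r_i,\overline r_i \le \widehat r_i \le r_i$ and $\widetilde r_{i+1} = \overline r_{i+1} = 0$. For \ref{item:r:difference} and \ref{item:L:difference} applied to $\widetilde V$ we have $r_j - \widetilde r_j = r_j - \widehat r_j$ and $L-\widehat L$, which are controlled by the type $i$ hypothesis on $\widehat V$ (and $r_i - \widetilde r_i \ge r_i - \widehat r_i$). For $\overline V$ we have $r_j - \overline r_j = (r_j - \widehat r_j) + \<u_{k+j},x\>$ and $L - \overline L \le (L-\widehat L) + \<u_{k-1},x\>/\<u_{k-1},u_0\>$, and both extra terms are $O(\widehat r_i)$, hence fit in the budget $C(r_i - \overline r_i + \sum_{l=i+1}^{2k-1}r_l)$ once we observe that $r_i - \overline r_i \ge \widehat r_i - \overline r_i = \<u_{k+i},x\> \simeq \widehat r_i/2$, provided $C$ is chosen large enough relative to $\widehat\cS$.

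The geometric heart is item (iii). By definition $\widehat V = \bigcap_{j=-k+1}^{k-1}\overline\bbH_{u_j}(R_j + \widehat L\<u_0,u_j\>) \cap \bigcap_{j=k}^{3k}\overline\bbH_{u_j}(R_j + \widehat r_{j-k})$, and since $\widehat V$ is of type $i$ the half-planes for $j>k+i$ are $\overline\bbH_{u_j}(R_j)$. Comparing with the analogous formula for $\widetilde V$, the only effective change is in the $u_{k+i}$ direction, so $\widetilde V = \widehat V \cap \overline\bbH_{u_{k+i}}(R_{k+i} + \<u_{k+i},x\>)$. For $\overline V = x + V^R_{\overline L}(\overline\ur)$, using the identity $\overline\bbH_u(s)+x = \overline\bbH_u(s+\<u,x\>)$ and our choice of $\overline r_j$ and $\overline L$, each defining half-plane of $\overline V$ is contained in the corresponding half-plane of $\widehat V$ (equality holding in directions $u_{k+j}$ for $j\le i$), so $\overline V \subseteq \widehat V$; the role of the $\min$ in the definition of $\overline L$ is precisely to ensure containment in the $\ell_{u_{\pm(k-1)}}$ directions. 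Furthermore, the translated half-plane $\overline\bbH_{u_{k+i}}(R_{k+i}) + x = \overline\bbH_{u_{k+i}}(R_{k+i}+\<u_{k+i},x\>)$ shows that $\overline V \cap (\overline\bbH_{u_{k+i}}(R_{k+i}+\<u_{k+i},x\>))^{c} = \overline T_i$. Combining the two displays gives $\widehat V\setminus\widetilde V = \widehat V \cap (\overline\bbH_{u_{k+i}}(R_{k+i}+\<u_{k+i},x\>))^{c} = \overline T_i$, and hence $\widetilde V\cup\overline V = \widetilde V \cup (\widehat V\setminus\widetilde V) = \widehat V$.

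The main obstacle is the half-plane juggling in the last paragraph, especially verifying $\overline V\subseteq\widehat V$ in the ``long'' directions $u_{\pm 1},\dots,u_{\pm(k-1)}$: the shift by $x$ pushes the boundary $\ell_{u_j}(R_j + \overline L\<u_0,u_j\>)$ in the direction of growing $\<u_j,\cdot\>$ when $\<u_j,x\>>0$, and this must be compensated by the reduction from $\widehat L$ to $\overline L$, which is why $\overline L$ is defined with the $\min$ instead of just $\widehat L$.
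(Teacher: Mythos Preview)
Your proposal follows essentially the same route as the paper: verify admissibility, verify the four conditions of Definition~\ref{def:relevant}, then compute the set identities by tracking half-plane offsets under the shift $x$. The structure and the key observations (that $\|x\|=\Theta(\widehat r_i)$, that $\overline r_j+\langle u_{k+j},x\rangle=\widehat r_j$ for $j\le i$, and that $\langle u_{k+i+1},x\rangle=0$) are the right ones.

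There are two genuine, though small, gaps. First, you do not verify the integrality condition on $\overline L$, namely that $(\widehat L-\overline L)\langle u_0,u_{k-1}\rangle/\rho_{k-1}\in\bbN$; this is part of what ``legal length parameter'' means in Definition~\ref{def:snail}. The paper checks it by noting $(\widehat L-\overline L)\langle u_0,u_{k-1}\rangle=\langle u_{k-1},x\rangle\in\rho_{k-1}\bbN$ because $x\in\bbZ^2$.

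Second, and more importantly, your final chain of equalities has a one-sided inclusion. From $\overline V\cap(\overline\bbH_{u_{k+i}}(R_{k+i}+\langle u_{k+i},x\rangle))^c=\overline T_i$ and $\overline V\subseteq\widehat V$ you only get $\overline T_i\subseteq\widehat V\cap(\overline\bbH_{u_{k+i}}(R_{k+i}+\langle u_{k+i},x\rangle))^c$, not equality. To close this you must show the reverse inclusion $\widehat V\setminus\widetilde V\subseteq\overline V$, or---as the paper does---compute both sides directly as the trapezoid
\[
\bigl(\overline\bbH_{u_{k+i}}(R_{k+i}+\widehat r_i)\setminus\overline\bbH_{u_{k+i}}(R_{k+i}+\langle u_{k+i},x\rangle)\bigr)\cap\overline\bbH_{u_{k+i-1}}(R_{k+i-1}+\widehat r_{i-1})\cap\overline\bbH_{u_{k+i+1}}(R_{k+i+1}),
\]
using Definition~\ref{def:ti} and the identities $\overline r_j+\langle u_{k+j},x\rangle=\widehat r_j$ for $j\in\{i-1,i\}$ together with $\langle u_{k+i+1},x\rangle=0$. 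Once you add these two checks, your argument is complete and matches the paper's.
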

\begin{proof}[Proof of the claim]
The statement that $\widetilde V$ is a snail of type $i$ follows from the definition of $\widetilde \ur$ and the same fact for $\widehat V$, since $\widetilde r_i\le \widehat r_i$. 

Turning to $\overline V$, notice that $\<u_j,x\>\ge 0$ for all $i\in[2k]$ and $j\in[k,k+i]$ with equality iff $i=2k-1$ and $j=k$. Thus, for all $j\in[2k]$ we have $ \widehat r_j\ge\overline r_j$ and clearly $\widehat L\ge \overline L$. Thus, recalling the definition of $\overline V$ and that $\widehat V$ is of type $i$, conditions \ref{item:type:i} and \ref{item:ri:smaller} and the left inequalities in \ref{item:r:difference} and \ref{item:L:difference} of Definition \ref{def:relevant} are satisfied. Moreover, for $0\le j<i$ we have 
\begin{equation}
\label{eq:r:diff}\widehat r_j-\overline r_j=\<u_{k+j},x\>=\frac{\<u_{i+1},u_{k+j}\>}{\<u_{i+1},u_{k+i}\>}(\widehat r_i-\overline r_i)\le C(\widehat r_i-\overline r_i),
\end{equation}
so the right inequality of \ref{item:r:difference} for $\overline V$ follows from the one for $\widehat V$. Similarly, 
\begin{equation}
    \label{eq:L:diff}
    \widehat L-\overline L\le \frac{|\<u_{k-1},x\>|}{\<u_{k-1},u_0\>}\le C(\widehat r_i-\overline r_i),
\end{equation}
gives that \ref{item:L:difference} of Definition \ref{def:relevant} holds for $\overline V$.

We next prove that $\overline V$ is a snail (with admissible $\overline L$ and $\overline\ur$). Recalling from Definition \ref{def:snail} that we need to prove that 
\begin{enumerate}[label=(\roman*),ref=(\roman*)]
    \item\label{item:positive} $\overline r_j\ge 0$ for all $j\in[2k]$,
    \item\label{item:rj:ratio} $\overline r_j\le \d \overline r_{j-1}$ for all $1\le j<2k$,
    \item\label{item:r0:ratio} $\overline r_0\le \d \overline L$, and
    \item\label{item:rational:line} $(\widehat L-\overline L)\<u_0,u_{k-1}\>/\r_{k-1}\in\bbN$.
\end{enumerate}

To check \ref{item:positive}, observe that $\overline r_j=\widehat r_j-O(\widehat r_i)>0$ for $j<i$ by admissibility of $\widehat \ur$ and $\overline r_i\simeq \widetilde r_i/2>0$. By admissibility of $\widehat V$, $\widehat r_j-\overline r_j=\<u_{k+j},x\>=\Theta(\widehat r_i)$ and $C<1/\d$ we get \ref{item:rj:ratio}. For the last two properties we consider two cases. 

First assume that $i\in\{2k-2,2k-1\}$ (i.e.\ $x$ corresponds to a horizontal translation to the right---in direction $u_{2k}$). It is easy to check from the definition of $\overline L$ that in this case $\overline L=\widehat L$, so that \ref{item:rational:line} is trivial and \ref{item:r0:ratio} follows from $\overline r_0\le \widehat r_0\le \d \widehat L$. This concludes the proof that $\overline V$ is a snail of type $i$ in this case.

Assume that, on the contrary, $i<2k-2$, so that the $\widehat L-\overline L=\frac{\<u_{k-1},x\>}{\<u_{k-1},u_0\>}=\Theta(\widehat r_i)$. Then  \ref{item:r0:ratio} follows from the fact that $\widehat r_0-\overline r_0=\Theta(\widehat r_i)$ as above. For \ref{item:rational:line} simply observe that
\[(\widehat L-\overline L)\<u_0,u_{k-1}\>=\<u_{k-1},x\>\in\r_{k-1}\bbN,\]
since $x\in\bbZ^2$ by the definition of $x$ and $\l$. This concludes the proof that $\overline V$ is a snail of type $i$.

By Definition~\ref{def:ti} it is clear that
\begin{multline*}
\widehat V\setminus\overline V=\left(\overline\bbH_{u_{k+i}}(R_{k+i}+\widehat r_i)\setminus\overline\bbH_{u_{k+i}}(R_{k+i}+\<u_{k+i},x\>)\right)\\
\cap\overline\bbH_{u_{k+i-1}}(R_{k+i-1}+\widehat r_{i-1})\cap\overline\bbH_{u_{k+i+1}}(R_{k+i+1}).
\end{multline*}
It also follows from Definition~\ref{def:ti} that the above trapezoid $\widehat V\setminus \overline V$ is also equal to $\overline T_i$ as claimed. Finally, we have that $\overline V\subset \widehat V$ using Definition~\ref{def:snail}, which completes the proof of the claim.
\end{proof}
Let now
\[
\overline V_{i-1}=V^R_{\overline L}(\overline r_0,\dots,\overline r_{i-1},0,\dots,0)=\overline V\setminus\overline T_i=\overline V\cap\widetilde V.
\]
By Claim~\ref{claim:shift} and Remark \ref{rem:prod} we have
\begin{equation}
\label{eq:OV,muV}
(\O_V,\mu_{\widehat V}(\cdot\tc G_{\widehat V}))= (\O_{\widetilde V}, \mu_{\widetilde V}(\cdot\tc \SG(\widetilde V)))\otimes(\O_{\overline T_i}, \mu_{\overline T_i}(\cdot\tc \cG(\overline T_i)))
\end{equation}
and we can apply Lemma \ref{lem:2block} with the facilitating event 
\[\SG(\overline V_{i-1})=\SG(\overline B)\cap\bigcap_{j<i}\cG(\overline T_j)\subset \O_{\widetilde V},\]
where $\overline B$ and $\overline T_j$ are the base and trapezoids of $\overline V$. We get 
\begin{equation}
\label{eq:rec3}
\begin{multlined}
 \var_{\widehat V}(f\tc \SG(\widehat V)) 
\le \mu(\SG(\overline V_{i-1}))^{-1}\mu_{\widehat V}\big(\var_{\widetilde V}(f\tc \SG(\widetilde V))\\ +\1_{\SG(\overline V_{i-1})}\var_{\overline T_{i}}(f\tc \cG(\overline T_i))\tc \SG(\widehat V)\big),
\end{multlined}
\end{equation}
where we used that $\mu_{\widetilde V}(\SG(\overline V_{i-1})\tc \SG(\widetilde V))\ge\mu(\SG(\overline V_{i-1}))$ by the Harris inequality.
Using the definition of the Poincar\'e constant $\g_{\widetilde V},$ the fact that $c_x^{\widetilde V}\le c_x^V$ together with $\mu(\SG(\overline V_{i-1}))\ge \mu(\SG(\overline V))$ the first term is bounded from above by
\begin{equation}
  \label{eq:1term}
\frac{\g_{\widetilde V}}{\mu(\SG(\overline V))} \mu_{\widehat V}\left(\sum_{x\in \widetilde V}c^{\widehat V}_x\var_x(f)\right).
\end{equation}
The term $\mu(\SG(\overline V_{i-1}))^{-1}\mu_{\widehat V}\left({\1}_{\SG(\overline V_{i-1})}\var_{\overline T_i}(f\tc \cG(\overline T_i))\tc \SG(\widehat V)\right)$ from the r.h.s.\ of \eqref{eq:rec3} can be bounded from above by 
\begin{multline}
\label{eq:2term}
\mu(\SG(\widetilde V))^{-1}\mu_{\widetilde V}\left(\mu_{\overline V_{i-1}}(\var_{\overline T_i}(f\tc \cG(\overline T_i))\tc \SG(\overline V_{i-1}))\right)\\
\begin{aligned}[t]\le{}&\mu(\SG(\widetilde V))^{-1}\mu_{\widetilde V}\left(\var_{\overline V}(f\tc \SG(\overline V))\right)\\
\le{}& \frac{\g_{\overline V}}{\mu(\cG(\widehat V))}\mu_{\widetilde V}\left(\sum_{x\in \overline V}c^{\widehat V}_x\var_x(f)\right),
\end{aligned}
\end{multline}
using \eqref{eq:OV,muV}, Lemma~\ref{lem:var:conv}, the definition of $\g_{\overline V}$ and the fact that $c_x^{\overline V}\le c_x^{\widehat V}$.

If we now combine \eqref{eq:rec3}, \eqref{eq:1term} and
\eqref{eq:2term} we get the statement of the lemma.
\end{proof}

We can now assemble our main induction step from Lemmas~\ref{prop:slice} and \ref{prop:bisection}. Namely, we repeatedly use Lemma \ref{prop:bisection} until the last trapezoid is reduced to a bounded number of lines and then apply Lemma \ref{prop:slice} to remove them as well.
\begin{cor}[Removing a trapezoid]
\label{cor:trapezoid}
Let $\s_i=1/\min_{V_i}\mu(\SG(V^i))$ with $\min$ running over all snails of type $i$. Let $\G_i=\max_{V_i'}\g_{V_i'}$, where the $\max$ runs over all snails of type $i$ with $r_i=0$. Let $\widehat V=V^R_{\widehat L}(\widehat \ur)$ be a snail of type $i$. Then
\[\g_{\widehat V}\le q^{-O(w^4)}\s_i^{O(\max(1,\log \widehat r_i))}\G_i.\]
\end{cor}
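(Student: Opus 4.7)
The plan is a straightforward induction on the height $\widehat r_i$ of the last non-empty trapezoid of $\widehat V$. I will repeatedly bisect this trapezoid via Lemma~\ref{prop:bisection} until its height drops below a fixed constant, and then strip off the remaining bounded number of slices in one stroke via Lemma~\ref{prop:slice}, landing on a snail of type $i$ with $r_i=0$ whose Poincar\'e constant is controlled by $\G_i$ by definition.

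More concretely, introduce $\G_i(r)=\sup\g_{V'}$ where $V'$ ranges over snails of type $i$ with last-trapezoid height at most $r$, so that $\G_i(0)=\G_i$. Fix a constant $C_0$ large enough so that Lemma~\ref{prop:bisection} applies whenever the last-trapezoid height exceeds $C_0$. In that case the bisection lemma produces two snails $\widetilde V,\overline V$, both of type $i$ and both with last-trapezoid height at most $\widehat r_i/2+O(1)$, and yields
\[
\g_{\widehat V}\le\frac{\g_{\widetilde V}}{\mu(\SG(\overline V))}+\frac{\g_{\overline V}}{\mu(\SG(\widetilde V))}\le 2\s_i\,\G_i\!\left(\widehat r_i/2+O(1)\right).
\]
Iterating this estimate $O(\log\widehat r_i)$ times brings the last-trapezoid height below $C_0$ and gives $\G_i(\widehat r_i)\le(2\s_i)^{O(\log\widehat r_i)}\G_i(C_0)$.

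For the base case of the induction, any snail $V'$ of type $i$ with last-trapezoid height bounded by $C_0$ satisfies $r_i'=\l\r_{k+i}$ for some $\l=O(1)$, so Lemma~\ref{prop:slice} applies and gives
\[
\g_{V'}\le q^{-O(w^4)}\max\!\left(\g_{\widetilde V'},\,1/\mu(\SG(V'))\right)\le q^{-O(w^4)}\s_i\G_i,
\]
where $\widetilde V'$ denotes the snail obtained by stripping the last trapezoid off $V'$: it has $\widetilde r_i'=0$ and is still of type $i$, hence $\g_{\widetilde V'}\le\G_i$. Combining this with the previous display and using that $\G_i,\s_i\ge1$ absorbs all prefactors into the claimed bound $\g_{\widehat V}\le q^{-O(w^4)}\s_i^{O(\max(1,\log\widehat r_i))}\G_i$.

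The only mildly delicate point is that the iteration must stay within the class of snails of type $i$ to which the induction hypothesis applies. This is precisely the content of the final assertion of Lemma~\ref{prop:bisection}, whose proof is set up so that the technical right-hand inequalities in items~\ref{item:r:difference} and~\ref{item:L:difference} of Definition~\ref{def:relevant} are preserved under repeated halving; the $O(1)$ additive slack in the intermediate heights is harmless because it is absorbed after finitely many further bisections. Once this geometric bookkeeping is in place the induction is well-posed and the estimate above closes the argument.
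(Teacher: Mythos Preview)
Your proof is correct and follows essentially the same approach as the paper: repeated bisection via Lemma~\ref{prop:bisection} until the last trapezoid has bounded height, followed by Lemma~\ref{prop:slice} to remove the remaining $O(1)$ slices. The paper packages the same iteration as an explicit induction on $\widehat r_i$ with a fixed constant $c$ in the exponent, whereas you introduce the auxiliary quantity $\G_i(r)$ and iterate the recursion directly, but the two arguments are interchangeable.
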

\begin{proof}[Proof of Corollary \ref{cor:trapezoid}]
Let $c$ be a sufficiently large constant. We prove by induction on $\widehat r_i$ that
\[\g_{\widehat V}\le q^{-cw^4}\s_i^{c\max(1,\log \widehat r_i)}\G_i.\]

The base of the induction, $\widehat r_i\le\sqrt{c}$, follows from Lemma \ref{prop:slice}, since $\g_{\widetilde V}\ge 1$ by definition. Assume that $\widehat r_i>\sqrt{c}$. Then Lemma \ref{prop:bisection} and the induction hypothesis applied to both $\widetilde V$ and $\overline V$ from that lemma give
\[\g_{\widehat V}\le \s_iq^{-cw^4}\s_i^{c\log (2\widehat r_i/3)}\G_i\le q^{-cw^4}\s_i^{c\log \widehat r_i}\G_i,\]
since both $\widetilde r_i$ and $\overline r_i$ in Lemma \ref{prop:bisection} are smaller than $2\widehat r_i/3$. This completes the proof of the induction step and the corollary.
\end{proof}
We are now ready to conclude the proof of Proposition~\ref{prop:snail:reduction} and of Theorem \ref{thm:key step}. 
\begin{proof}[Proof of Proposition~\ref{prop:snail:reduction}]
Applying Corollary \ref{cor:trapezoid} to each non-zero coordinate of $\ur$, we obtain
\[\g_{V}\le\left(q^{-O(w^4)}\s\right)^{O(\log L)}\G\]
with the notation of the statement of Proposition~\ref{prop:snail:reduction}.
\end{proof}
\begin{proof}[Proof of Theorem \ref{thm:key step}]
Combining Propositions \ref{prop:base case} and \ref{prop:snail:reduction} we get
\[\g_{V}\le \left(q^{-w^4}\s q^{-Rw}\right)^{O(\log L)}\le e^{-O(w^4\log^3(1/q)/q^\a)},\]
where the last equality follows from Observation \ref{obs:prob:SG}.
\end{proof}

\section{Proof of Theorem~\ref{th:main}}
\label{sec:Proof}
Recall that $w$ is a large constant much
bigger than the constants in any $O(\cdot)$ notation.
Let $t_*= \frac 1w e^{w^5\log^3(1/q)/q^\a}$ and $T=e^{1/q^{3\a}}$. Then we have
\begin{align*}
  \bbE_\mu(\t_0)={}&\int_0^{+\infty}
                     \ds\bbP_\mu(\t_0>s)\\
                     ={}&\int_{0}^{t_*}
                     \ds\bbP_\mu(\t_0>s) + \int_{t_*}^T \ds\bbP_\mu(\t_0>s)
  +\int_T^{+\infty} \ds \bbP_\mu(\t_0>s) \\
  \le{}& t_* + T\bbP_\mu(\t_0>t_*)
  +\int_T^{+\infty} \ds \bbP_\mu(\t_0>s).  
\end{align*}
The term $t_*$ has exactly the form required in
Theorem~\ref{th:main}. The last term in the r.h.s.\ above tends to zero as $q\to
0$. Indeed, using \cite{Martinelli19a}*{Theorem 2} we have that
$\forall s>0,\bbP_\mu(\t_0>s)\le e^{-s \l_0}$
with $\l_0 \ge e^{-\O((\log q)^4/q^{2\a})}$ and therefore
\[
\int_T^{+\infty} \ds \bbP_\mu(\t_0>s)\le \l_0^{-1}e^{-T\l_0}\to
0\quad\text{ as $q\to 0$}.
\]
In conclusion, the proof of the upper bound in Theorem~\ref{th:main}
boils down to proving
\begin{equation}
  \label{eq:caldo}
  \lim_{q\to 0}T\bbP_\mu(\t_0>t_*)=0.  
\end{equation}
That
requires a sequence of simple steps (\ref{step:a}-\ref{step:d} below) and a more
involved part (\ref{step:e} below).
Before turning to the details of the proof of Theorem~\ref{th:main},
let us sketch our approach.
\subsection{Roadmap}
\begin{enumerate}[label=(\alph*),ref=(\alph*)]
\item\label{step:a} In order to prove that w.h.p.\ $\t_0\le t_*$, it suffices to prove the result
for the (stationary) $\cU$-KCM process on to the torus $\L$ and with side $K=2e^{w^5\log^3(1/q)/q^\a}$ (see \eqref{eq:5}).
\item\label{step:b} Let $L=\Theta(\l)/q^{3w}$ for a large positive constant $\l=\l(\cU,\d)$, let
$R=w^2\log(1/q)/q^\a$, and recall the
good and super good events described in Section \ref{sec:good-supergood} and Definition \ref{def:good events}. Given a snail $V=V^R_L(\ur)=B\cup \bigcup_{i\in[2k]} T_i^\pm\subset \L$ (recall Definitions \ref{def:snail} and \ref{def:ti}) with base $B$ and trapezoids $T_i^\pm$, we will construct  a new event
$\cE\subset \O_{\L\cap \bbZ^2}$ which will guarantee that (in
particular) the following occurs. 
\begin{enumerate}[label=(\roman*),ref=(\alph{enumi}.\roman*)]
\item\label{item:i} For any (translate of)  $V\subset \L$ as above, the good events $\cG(T_i^\pm)$ occur for all $i\in [2k]$.
\item\label{item:ii} In every strip of $\L$ parallel to $u_0$ and of width $2R$ there exists a translate of the base $B$ for which the super good event holds.
\end{enumerate}
\item\label{step:c} We will prove that $\mu(\cE)\ge 1- e^{-1/q^{w}}$, which will allow us to conclude that it is sufficient to analyse the infection time of the
origin of the stationary  $\cU$-KCM
in $\L$ restricted to $\cE$ (see \eqref{eq:10}).
\item\label{step:d} For the latter process we will follow the
standard ``variational'' approach (see \cite{Asselah01}*{Theorem 2} and
also \cite{Martinelli19a}*{Section 2.2}) and get that
\[
  T\bbP_\mu(\t_0\ge t_*)\le Te^{-t_* \l_\cF}(1 +o(1)).
  \]
Here $\l_\cF$ is related to the Dirichlet problem for the $\cU$-KCM on the torus and restricted to $\cE$ with boundary condition $f\big|_{\{\o\in \cE:\o_0=0\}}=0$. In particular (see \eqref{eq:casa}) \[
\l_\cF\ge\inf_f q\frac{\cD^{\text{per}}_\L(f)}{\var_\L(f\tc \cE)},
\]
where $\cD^{\text{per}}_\L(f)$ is 
the Dirichlet form of the $\cU$-KCM on the torus $\L$ and the supremum is taken over all $f:\cE\to \bbR$.
\item\label{step:e} The last and most important step will be to prove that
\begin{equation*}
  \var_\L(f\tc \cE)\le e^{O(w^4\log^3(1/q))/q^\a} \cD^{\text{per}}_\L(f),
\end{equation*}
implying that $t_*\l_\cF$ diverges as $q\to 0$ rapidly enough. The high-level intuition behind the above Poincar\'e inequality is as
follows. A super good base (i.e.\ a base $B$ for which the super good $\cS\cG(B)$ event holds), whose presence is guaranteed by
\ref{item:ii}, will be able to move in $\L$ using an FA1f-like dynamics as in Lemma \ref{lem:FAperiodic} with $\widehat\nu(\cH)$ given by
\[
e^{\Theta(w^3\log^2(1/q))/q^\a}.
\]
Indeed, we will reproduce each step of that dynamics with a resampling of an appropriate super good translate of the snail $V$, since \ref{item:i} guarantees that the super good base does extend to a super good translate of the snail $V$. Indeed, the snail (see Figure \ref{fig:snails}) does extend on both sides of the base for a distance $\Theta(r_{2k-1})$, so taking $r_{2k-1}$ of order $L$, one can induce a change on both sides of the base by resampling the configuration inside the snail. Thanks to Theorem \ref{thm:key step}, the relaxation time of the super good snail is $e^{O(w^{4}\log^3(1/q))/q^\a}$. The conclusion of Theorem~\ref{th:main} then follows rather naturally.
\end{enumerate}

\subsection{Proof}
Let $K=2\exp(w^5\log^3 (1/q)/q^{\a})$ and let 
$\L=\bbR^2/(Ku_0\bbZ+Ku_k\bbZ)$
be the torus in $\bbR^2$ of side $K$ directed by $u_0$, which we think of as centred at $0$. Further set
\begin{align*}
R=&w^2\log(1/q)/q^\a, &W=&1/q^{3w},& M=&K/(2R_0+W),
\end{align*}
recalling the notation $R_0=\rho_0\lfloor R/\rho_0\rfloor$ from Definition~\ref{def:ring}. For simplicity we assume that $u_0(2R_0+W)\in\bbZ^2$ and that $M$ is an even integer ($W$ and $K$ can be modified by $O(1)$ and $O(1/q^{3w})$ respectively, so that these both hold). 

We partition $\L$ into alternating strips $\L^{(1)}_i,\L_i^{(2)}, i\in [M]$, of length $K$ and parallel to $u_0$ (see Figure \ref{fig:strips}). The strips $\L_i^{(1)}$ have width $2R_0$ while the strips $\L_i^{(2)}$ have width $W$. We write $\L_i=\L_i^{(1)}\cup \L_i^{(2)}$ and we think of the thin strip $\L_i^{(1)}$ as being just below the thick one $\L_i^{(2)}$, when $u_0$ points left. In turn, we partition $\L_i$ into consecutive squares $Q_{i,j}, j\in [M],$ of side length equal to $2R_0+W$ and sides parallel to $u_0$ and $u_k$ and we write $Q_{i,j}^{(a)}=Q_{i,j}\cap \L_i^{(a)}, a\in\{1,2\}$. 
\begin{figure}
    \centering
    \begin{tikzpicture}[scale=0.25]

\foreach \j in {0,...,7}{ 
\draw  [color=gray] (-10,-5+2*\j)--(30,-5+2*\j)--(30,-5+2*\j+2)--(-10,-5+2*\j+2)--cycle;  
\draw  [fill,opacity=0.5] (-10,-5+2*\j)--(30,-5+2*\j)--(30,-5+2*\j+0.5)--(-10,-5+2*\j+0.5)--cycle;  
}
\foreach \j in {0,...,20}{ 
\draw  [color=gray] (-10+2*\j,-5)--(-10+2*\j,11);  
}
\draw [pattern=north east lines, opacity=1] (-6,-3) rectangle (-4,-1);
\begin{scope}
\begin{scriptsize}
\draw [thin,decorate,decoration={brace,amplitude=2pt},xshift=-4pt] (-10,-2.5)--(-10,-1.1) node [black,xshift=-0.5cm] {$\L^{(2)}_i$};
\draw [thin,decorate,decoration={brace,amplitude=0.8pt},xshift=-4pt]
(-10,-5)--(-10,-4.5) node [black,xshift=-0.5cm] {$\L^{(1)}_{i-1}$};
\draw [thick,<->]  (30.3,-3.1)--(30.3,-4.5);
\draw (31,-4) node[anchor=west]
 {$W=\frac{1}{q^{3w}}$};
\draw [thick,<->]  (0,-5.5)--(2,-5.5);
\draw (1,-6.5) node
 {$W+2R_0$};
 \draw [thick,|-|]  (30.3,1)--(30.3,1.5);
\draw (31,1.2) node[anchor=west]
{$2R_0\approx\frac{2w^2}{q^\a} \log\frac{1}{q}$};
\end{scriptsize}
\end{scope}

\draw[line width=1pt] (-6,-3)--(-7,-2)--(-7,3)--(-1,9)--(21,9)--(27,3)--(27,-2)--(26,-3)--cycle;
\draw[line width=1pt] (-3.85,-3)--(-7,0.15);
\draw[line width=1pt] (-4,-2.85)--(-4,6);
\draw[line width=1pt] (-4,-2.65)--(7.65,9);
\draw[line width=1pt] (23.85,-3)--(27,0.15);
\draw[line width=1pt] (24,-2.85)--(24,6);
\draw[line width=1pt] (24,-2.65)--(13.65,9);
\draw[line width=1pt] (-3.85,-2.5)--(23.85,-2.5);
\end{tikzpicture}
\caption{The partition of $\L$ into strips $\L_i=\L_i^{(1)}\cup \L_i^{(2)},i\in [M]$. The hatched region represents a square $Q_{i,j}$, which we would like to resample. The thick polygon is the snail $V$ with its trapezoids. Note that its base $B$ does not intersect $Q_{i,j}$ and (almost) spans the squares $Q_{i,j+1},\dots,Q_{i,j+\l}$.}
    \label{fig:strips}
  \end{figure}
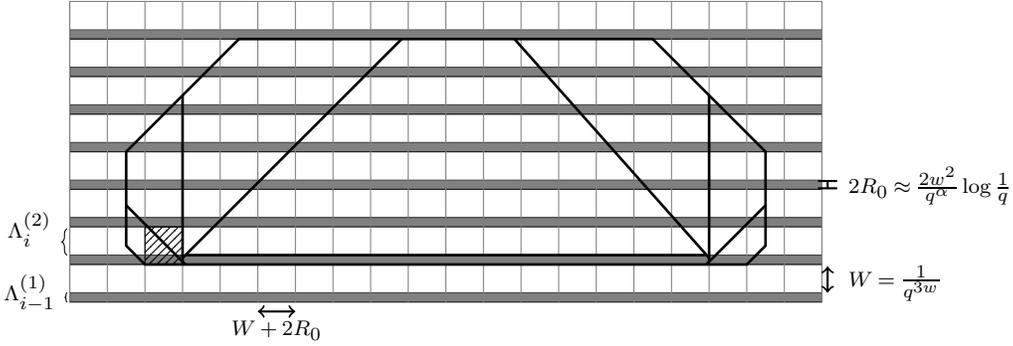
\begin{rem}
Recalling Definition~\ref{def:ring}, the width of the thin strips is chosen so that an annulus $A$ of radius $R$ would fit tightly inside.
\end{rem}
We are now ready to detail the steps \ref{step:a}-\ref{step:e} sketched in the roadmap above.

\subsubsection{Step \ref{step:a}} Notice that $t_*= K/(2w)$ and let $\t_0,\t_0^\L$
denote the infection times of the origin for the $\cU$-KCM process on
$\bbZ^2$ and for the $\cU$-KCM process on the discrete torus $\L\cap \bbZ^2$ respectively. Using the fact that the jump rates of the KCM are bounded, a standard argument of finite speed of information propagation (see e.g.\ \cite{Liggett05}) implies that
\begin{equation}
  \label{eq:5}
  \bbP_\mu\left(\t_0\ge t_*\right) \le \bbP_{\mu_\L}\left(\t^\L_0\ge t_*\right) + e^{-\O(K)}\quad \text{as }q\to 0.
\end{equation}

\subsubsection{Step \ref{step:b}} Given a small positive constant $\varepsilon=\varepsilon(\cU)=\O(1)$ and a large one $\l=\l(\cU,\d)$ to be specified later (recall the constant $\d$ from Definition \ref{def:snail}) let
\begin{enumerate}[label=(\roman*),ref=(\roman*)]
\item\label{event:i} $\SG\left(Q^{(1)}_{i,j}\right)$ be  the event that the rightmost and leftmost annuli $A$ in $Q^{(1)}_{i,j}$ are infected and any segment $I\subset Q^{(1)}_{i,j}$ intersecting $\bbZ^2$, of length $\varepsilon R$ and
  orthogonal to some $u_i\in \hS_0$ contains an infected $u_i$-helping set in $Q_{i,j}^{(1)}$;
\item\label{event:ii} $\cG(Q_{i,j})$ be the event that any segment $I\subset Q_{i,j}$ intersecting $\bbZ^2$, of length $\varepsilon W$ and orthogonal to some $u\in \hS$ contains $w$ infected consecutive sites;
\item\label{event:iii} $\cE_{i}$ be the event that for all the squares $Q_{i,j}\subset
    \L_i$ the event $\cG(Q_{i,j})$ holds and moreover there exists $j\in
    [M]$ such that $\bigcap_{j'=j+1}^{j+\l}\SG\left(Q^{(1)}_{i,j'}\right)$ also holds;
   \item\label{event:iv} $\cE=\bigcap_{i\in[M]}\cE_i$.
\end{enumerate}
\begin{rem}
Similarly to Definition \ref{def:good events}, \ref{event:i} needs to be modified slightly if Assumption \ref{ass:1} is not satisfied, but we keep working under that assumption.
\end{rem}

\subsubsection{Step \ref{step:c}} With our choice of $K,R,W$, as in Observation \ref{obs:prob:SG}, it follows that
$\mu\left(\SG\left(Q^{(1)}_{i,j}\right)\right)=q^{O(Rw)}$. Moreover, using the Harris inequality 
\begin{equation}
\label{eq:muAijRij}
\mu\left(\bigcap_{j\in[\l]}\SG\left(Q^{(1)}_{i,j}\right)\right)\ge q^{O(\l Rw)}.
\end{equation}
Also,
\[
  \mu\left(\bigcap_{j\in[M]} \cG(Q_{i,j})\right)\ge 1- O\left(MW^2\right)e^{-q^w \varepsilon W/w^2} \geq 1- e^{-q^{-2w+o(1)}}.
\]
In conclusion, 
\begin{equation}
\label{eq:muEi}
1-\mu(\cE_i)\le e^{-q^{-2w+o(1)}}+\left(1-q^{O(\l Rw)}\right)^{\lfloor M/\l\rfloor}\le e^{-q^{-2w+o(1)}}\end{equation}
and $\mu(\cE)\ge 1-M(1-\mu(\cE_i))\ge 1-e^{-q^{-2w+o(1)}}$. Therefore, writing
$\t^\L_{\cE^c}$ for the hitting time of $\cE^c$  for the
$\cU$-KCM process in $\L$ and recalling that $t_*=K/(2w)$, we obtain
\begin{equation}
  \bbP_{\mu_\L}(\t^\L_{\cE^c}\le t_*)\le
O(K^2 t_*)\mu(\cE^c) +e^{-\O(K^2t_*)}\le e^{-q^{-2w+o(1)}}.
\label{eq:7bis}
\end{equation}
In the second inequality above we used a simple union bound over the updates for the $\cU$-KCM in $\L$ together with the fact that the law of the $\cU$-KCM process in $\L$ started from $\mu_\L$ is equal to $\mu_\L$ at any given time and a simple large deviations result on the number of updates.

Thus, if $\cF=\{\o:\o_0=0\}\cup \cE^c$ then
\eqref{eq:5} together with \eqref{eq:7bis} imply that
\begin{equation}
\label{eq:10}
\begin{aligned}
  \bbP_\mu(\t_0\ge t_*)&\le \bbP_\mu(\t^\L_0\ge t_*) + e^{-\O(K)}\\
  &\le \bbP_{\mu_\L}(\t_\cF^\L\ge t_*) +\bbP_{\mu_\L}(\t_{\cE^c}^\L\le
    t_*)+ e^{-\O(K)}\\
&\le \bbP_{\mu_\L}(\t_\cF^\L\ge t_*) + e^{-q^{-2w+o(1)}}.
\end{aligned}
\end{equation}

\subsubsection{Step \ref{step:d}} As in \cite{Asselah01}*{Theorem 2},
\begin{equation}
  \label{eq:14}
  \bbP_{\mu_\L}(\t_\cF^\L\ge t_*)\le e^{-\l_\cF t_*},
\end{equation}
with
\[
  \l_\cF=\inf\left\{\frac{\cD_\L^{\rm per}(f)}{\mu_\L(f^2)},f|_{\cF}=0\right\},
  \]
where $\cD_\L^{\rm per}(f)$ denotes the Dirichlet form of the
$\cU$-KCM process on the torus $\L$ (see \eqref{eq:Dirichlet}). 
Observe now that for any $f:\O_\L\to \bbR$ such that $f|_{\cF}=0$
\begin{align*}
\var_\L(f\tc \cE)={}& \frac 12 \sum_\o \sum_{\o'}\mu_\L(\o\tc \cE)\mu_\L(\o'\tc \cE)
(f(\o)-f(\o'))^2\\
\ge{}& \mu_\L(\o_0=0\tc \cE)\mu_\L(f^2\tc \cE)\ge q\mu_\L(f^2), 
\end{align*}
where for the last inequality we used the Harris inequality
($\{\o:\o_0=0\}$ and $\cE$ are both decreasing events) and the fact that
$f^2\1_{\cE}=f^2$. 
Hence,
\begin{equation}
 \label{eq:casa}\l_\cF \ge q \inf_f \frac{\cD_\L^{\rm per}(f)}{\var_\L(f\tc \cE)}.    
\end{equation}
 Notice the absence of the conditioning event $\cE$ in the Dirichlet form
$\cD_\L^{\rm per}(f)$. 

\subsubsection{Step \ref{step:e}} Our main result on the above variational problem is
as follows. 
\begin{thm}
\label{thm:periodic} For all $w>0$ large enough,
all $\varepsilon>0$ small enough and all $f:\O_\L\to \bbR$    
\begin{equation}
  \label{eq:9}
  \var_\L(f\tc \cE)\le e^{O(w^4(\log (1/q))^3)/q^\a}\cD^{\rm per}_\L(f),
\end{equation}
i.e.
\begin{equation}
\label{eq:lF}
  \l_\cF\ge e^{-O(w^4(\log (1/q))^3)/q^\a}.
\end{equation}
\end{thm}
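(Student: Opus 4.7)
The plan is a three-scale block-dynamics argument that lifts Theorem~\ref{thm:key step} from super good snails to the torus~$\L$, realising the heuristic of step~\ref{step:e} of the roadmap. First, since the strips $\L_i$ are disjoint and $\cE=\bigcap_{i\in[M]}\cE_i$ factors over them, Lemma~\ref{lem:var:conv} together with~\eqref{eq:muEi} lets me reduce
\[\var_\L(f\mid\cE)\le (1+o(1))\sum_{i\in[M]}\mu\bigl(\var_{\L_i}(f\mid\cE_i)\bigr).\]

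Second, on each strip $\L_i$ I would apply Lemma~\ref{lem:FAperiodic} to a cyclic chain of $M$ super-sites $j\in[M]$, each with single-site space $\{0,1\}^{Q_{i,j}\cap\bbZ^2}$ under $\mu(\cdot\mid\cG(Q_{i,j}))$, range $\k=\l$, and single-site constraining event $\cH=\SG(Q^{(1)}_{i,j})$. By~\eqref{eq:muAijRij}, $\widehat\nu(\cH)^\l\ge q^{O(\l Rw)}$, so the hypothesis of Lemma~\ref{lem:FAperiodic} reduces to $M\gg q^{-O(\l Rw)}=\exp(O(\l w^3\log^2(1/q)/q^\a))$, which is comfortably met by $K=2\exp(w^5\log^3(1/q)/q^\a)$. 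This yields
\[\var_{\L_i}(f\mid \cE_i)\le q^{-O(\l Rw)}\sum_{j\in[M]}\mu\bigl(\1_{\cH_j}\var_{Q_{i,j}}(f\mid\cG(Q_{i,j}))\bigm|\cE_i\bigr),\]
where $\cH_j=\bigcap_{j'=j+1}^{j+\l}\SG(Q^{(1)}_{i,j'})\cup\bigcap_{j'=j-1}^{j-\l}\SG(Q^{(1)}_{i,j'})$.

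Third, and this is the crux, each generic term above must be bounded by $e^{O(w^4\log^3(1/q)/q^\a)}$ times a piece of $\cD^{\rm per}_\L(f)$. On $\cH_j\cap \cG(Q_{i,j})$ I would exhibit a super good snail $V=V^R_L(\ur)$ whose base $B$ sits inside the super good union $\bigcup_{j'}Q^{(1)}_{i,j'}$ indicated by $\cH_j$ (using the infected annuli provided by~\ref{event:i} to cover both $A$ and $HA+Lu_0$), and whose admissible cascade $\ur$ with $r_{2k-1}=\Theta(W)$ and $r_{i-1}=r_i/\d$ makes its trapezoids reach across $Q_{i,j}$. This forces $L=\Theta(W\d^{-2k})$ and the choice $\l=\Theta(\d^{-2k})$ in step~\ref{step:b}, with the constraint $L\le q^{-4w}$ of Theorem~\ref{thm:key step} met since $W=q^{-3w}$. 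The events $\cG(T^\pm_i)$ and the slice events $\SB_j$ entering $\cG(\oB)$ are furnished by $\cG(Q_{i,j})\cap\cH_j$ provided $\varepsilon$ is small enough (depending on $\d,k$), so $\SG(V)\supset\cH_j\cap\cG(Q_{i,j})$. Using Lemma~\ref{lem:var:conv} to enlarge the variance from $Q_{i,j}$ to $V$, Theorem~\ref{thm:key step} on $V$, the pointwise bound $c_x^V\le c_x$, the Harris inequality $\mu(\SG(V)\mid\cE_i)\ge\mu(\SG(V))$, and $\mu(\cE_i)=1-o(1)$, one obtains the claimed bound. Multiplying the three factors gives
\[\var_\L(f\mid\cE)\le q^{-O(\l Rw)}e^{O(w^4\log^3(1/q)/q^\a)}\cD^{\rm per}_\L(f)=e^{O(w^4\log^3(1/q)/q^\a)}\cD^{\rm per}_\L(f),\]
since $q^{-\l Rw}=\exp(O(\l w^3\log^2(1/q)/q^\a))$ is dominated.

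The main obstacle is the third step: arranging the snail $V$ so that its base $B$ precisely matches the annulus structure imposed by $\SG(Q^{(1)}_{i,j'})$ on the super good bases of $\cH_j$, its trapezoids reach across $Q_{i,j}$ while relying only on helping sets already provided by $\cG(Q_{i,j})$, and its parameters $R,L,\ur$ satisfy all admissibility hypotheses of Definition~\ref{def:snail} and Theorem~\ref{thm:key step}. These are delicate but routine geometric consistency checks once $\l,\varepsilon,L,R,W$ are tuned in step~\ref{step:b}, and they introduce no further $q$-dependence beyond the factors already accounted for.
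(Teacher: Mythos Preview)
Your approach is essentially the paper's: decompose over strips via Lemma~\ref{lem:var:conv}, run the cyclic FA1f block dynamics of Lemma~\ref{lem:FAperiodic} along each strip with $\k=\l$ super-sites, and then bound each facilitated square variance by exhibiting a super good snail $V\supset Q_{i,j}$ and invoking Theorem~\ref{thm:key step}. The parameter tuning you describe matches Lemma~\ref{lem:final}.

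There is, however, one real oversight in your third step. With $r_{2k-1}=\Theta(W)$ and $r_{l-1}=r_l/\d$, the first trapezoid has height $r_0=\Theta(\d^{1-2k}W)\gg W$, so the snail $V$ extends vertically across several strips $\L_{i'}$ with $i'\neq i$ (this is visible in Figure~\ref{fig:strips}). Consequently the events $\cG(T_l^\pm)$ for these trapezoids are \emph{not} furnished by $\cG(Q_{i,j})\cap\cH_j$: both of the latter live entirely in $\L_i$ and say nothing about slices of $T_l^\pm$ lying in other strips. Your claimed inclusion $\SG(V)\supset\cH_j\cap\cG(Q_{i,j})$ is therefore false as stated. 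The paper avoids this by keeping the conditioning on $\cE$---and hence on $\bigcap_{i',j'}\cG(Q_{i',j'})$ over \emph{all} strips---through the argument rather than discarding it to a $(1+o(1))$ factor at step one; it is precisely that global conditioning (see the second inclusion in~\eqref{eq:1A+}) that supplies the $w$ consecutive infections required in each trapezoid slice outside $\L_i$. The repair is immediate: either retain the $\cE$-conditioning through your first two steps, or in the third step reinsert the indicator of $\bigcap_{i'\neq i,\,j'}\cG(Q_{i',j'})$ under the now-unconditional outer measure at cost $1+o(1)$.
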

Before proving this theorem, let us first complete the proof of
\eqref{eq:caldo}.
Using $t_*=w^{-1}\exp(w^5(\log (1/q))^3/q^{\a})$ and \eqref{eq:lF}
we get that for any $w$ large
enough
\[
  t_*\l_\cF\ge 1/q^w\]
which, together with \eqref{eq:10} and \eqref{eq:14} and the choice of
$T=e^{1/q^{3\a}}$, gives
\begin{equation}
  \label{eq:12}
T\bbP_\mu(\t_0\ge t_*)\le T\left(e^{-\l_\cF
    t_*}+ e^{-q^{-2w+o(1)}}\right)\to 0.
\end{equation}
This proves \eqref{eq:caldo} and therefore Theorem \ref{th:main} modulo Theorem \ref{thm:periodic}.

\begin{proof}[Proof of Theorem \ref{thm:periodic}]
The two main ingredients of the proof will be Lemma
\ref{lem:FAperiodic} and Theorem \ref{thm:key step}. The definition of
the event $\cE=\bigcap_i \cE_i$ and the fact that the strips $\L_i$ are disjoint imply that $\mu_\L(\cdot\tc
\cE)=\bigotimes_i \mu_{\L_i}(\cdot\tc \cE_i)$. In turn, 
Lemma \ref{lem:var:conv} gives
\begin{equation}
  \label{eq:200}
  \var_\L(f\tc \cE)\le \sum_i \mu_\L(\var_{\L_i}(f\tc \cE_i)\tc \cE).
\end{equation}
Hence, it is enough to analyse a generic term $\mu_\L(\var_{\L_i}(f\tc
\cE_i)\tc \cE)$ and for this purpose we plan to apply Lemma
\ref{lem:FAperiodic} to bound from above $\var_{\L_i}(f\tc \cE_i)$.

Recall that the strip $\L_i$ is the disjoint union of $M$ squares
$(Q_{i,j})_{j=1}^M$ and recall the definition of the ``single square'' events $\SG\left(Q^{(1)}_{i,j}\right)$ and $\cG(Q_{i,j})$ given in \ref{event:i} and \ref{event:ii} above. Those definitions allow us to write (in what follows the index
$i$ of the strip is fixed)
\[
  \mu_{\L_i}(\cdot\tc \cE_i)=\nu_i\left(\cdot \tc \bigcup_j \bigcap_{j'=j+1}^{j+\l}\SG\left(Q^{(1)}_{i,j'}\right)\right)
\]
where $\nu_{i,j}=\mu_{Q_{i,j}}(\cdot\tc \cG(Q_{i,j}))$ and $\nu_i=\bigotimes_j\nu_{i,j}$. We can now apply Lemma
\ref{lem:FAperiodic} to the product measure $\nu_i$ with
$\SG\left(Q^{(1)}_{i,j}\right)$ as the event $\cH$, $M$ as the parameter $n$, and $\l$ as the parameter $\k$. The choice of the key
parameter $\k$ entering the definition of the associated facilitating events $\cH_{i,j}$ in the periodic case, 
\[
\cH_{i,j}=\bigcap_{j'=j+1}^{j+k}\SG\left(Q^{(1)}_{i,j'}\right)\cup \bigcap_{j'=j-1}^{j-k}\SG\left(Q^{(1)}_{i,j'}\right),
\]
will be postponed to Lemma
\ref{lem:final} below. There $\k$ will be chosen to be large enough but independent of $q$. The requirement
of Lemma \ref{lem:FAperiodic} that $(1-\widehat\nu(\cH)^\k)^{n/(3\k)}< 1/16$ is implied by \eqref{eq:muEi}. 

In the above setting, Lemma
\ref{lem:FAperiodic} gives
\[
 \var_{\L_i}(f\tc \cE_i)\le q^{-O(Rw\l)}\sum_j
  \nu_i\left(\1_{\cH_{i,j}}\var_{Q_{i,j}}(f\tc \cG(Q_{i,j}))\right).\]
By combining the above with \eqref{eq:200} we finally get    
\[
\var_\L(f\tc \cE)\le q^{-O(Rw\l)}\sum_{i,j}
  \mu_{\L}\left(\1_{\cH_{i,j}}\var_{Q_{i,j}}(f\tc \cG(Q_{i,j}))\tc \mkern-12mu\bigcap_{i',j'\in[M]}\mkern-12mu\cG(Q_{i',j'})\right).
\]
We shall now analyse a generic term
in the sum above with the help of Theorem \ref{thm:key step}.
\begin{lem}
\label{lem:final}
There exists an constant $\l=\l(\cU,\d)$ such that the following holds. If the parameter $\k$ of the facilitating events
$\cH_{i,j}$ is taken equal to $\l$ then, for any function $f:\O_\L\to \bbR$ and any $i,j$, 
\begin{multline*}
\mu_{\L}\left(\1_{\cH_{i,j}}\var_{Q_{i,j}}(f\tc \cG(Q_{i,j}))\tc \bigcap_{i',j'}\cG(Q_{i',j'})\right)\\\le
q^{-O(w^4\log^3(1/q)/q^\a)}\mkern-36mu \sum_{\substack{x\in
  \L\\ d(x,Q_{i,j})\le O(\l W)}}\mkern-36mu\mu_{\L}\left(c_x \var_x(f)\right).
  \end{multline*}
\end{lem}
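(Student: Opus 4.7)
The plan is to construct a snail $V$ whose base $B$ lies inside the $\lambda$ consecutive super good thin squares provided by $\cH_{i,j}$ and whose trapezoids reach across $Q_{i,j}$. On $\cH_{i,j}\cap\bigcap_{i',j'}\cG(Q_{i',j'})$ the super good event $\SG(V)$ is then implied, and Theorem \ref{thm:key step} furnishes a Poincar\'e inequality for $V$ with constant $e^{O(w^4\log^3(1/q)/q^\alpha)}$, exactly what is required.

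By symmetry, assume that the $\lambda$ super good thin squares lie to the right of $Q_{i,j}$. Choose $\lambda$ to be a sufficiently large constant depending on $(\cU,\delta)$, set $L=\Theta(\lambda W)\le q^{-4w}$, and pick an admissible $\underline r$ with $q^{-2w}\le r_{2k-1}$ chosen so that the right-snail $V^+$ extends $\Theta(W)$ along $u_0$ beyond its annulus in the direction of $Q_{i,j}$ (this is controlled by $r_k$ and is compatible with admissibility for constant $\delta$ and $q$ small). Position $V=V^R_L(\underline r)$ so that its base lies along $u_0$ inside $\bigcup_{j'=j+1}^{j+\lambda}Q^{(1)}_{i,j'}$, with the annulus $A$ of $V^+$ placed inside an infected annulus from $\SG(Q^{(1)}_{i,j+1})$ and the annulus of $V^-$ placed inside one from $\SG(Q^{(1)}_{i,j+\lambda})$. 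For $\lambda$ large enough, $Q_{i,j}\subset V$ via the trapezoidal extension of $V^+$.

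On the conditioning event $\cH_{i,j}\cap\bigcap_{i',j'}\cG(Q_{i',j'})$, the super good event $\SG(V)$ holds: $\cA$ and $\HA$ for $V$ come from the super good annuli in the endpoint squares; each slice $SB_{j'}$ of the truncated base $\overline B$ lies in a single super good thin square and inherits its $u$-helping sets from $\SG(Q^{(1)}_{i,j''})$; every slice of a trapezoid passes through the interior of some $Q_{i',j'}$, and since $\cG(Q_{i',j'})$ supplies $w$ consecutive infections inside every oriented sub-segment of length $\varepsilon W$ within $Q_{i',j'}$, the required events $\ST^\pm_{l,j'}$ hold. Since $Q_{i,j}\subset V$, combining Lemma \ref{lem:var:conv} with the product structure of $\mu_V(\cdot\tc \SG(V))$ from Remark \ref{rem:prod} inflates $\var_{Q_{i,j}}(f\tc \cG(Q_{i,j}))$ into $\var_V(f\tc \SG(V))$ (up to Dirichlet contributions $c_x\var_x(f)$ arising from an auxiliary two-block step as in Lemma \ref{prop:bisection}) at cost $\mu(\SG(V))^{-1}=e^{O(w^3\log^2(1/q)/q^\alpha)}$ by Observation \ref{obs:prob:SG}. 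Theorem \ref{thm:key step} then bounds $\var_V(f\tc \SG(V))$ by $e^{O(w^4\log^3(1/q)/q^\alpha)}\sum_{x\in V}\mu_V(c^V_x\var_x(f))$. Using $c^V_x\le c_x$, $V\subset\{x:d(x,Q_{i,j})\le O(\lambda W)\}$, and $\mu(\bigcap_{i',j'}\cG(Q_{i',j'}))\ge 1-o(1)$ (Observation \ref{obs:prob:G}), yields the claimed bound.

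The main obstacle is the geometric construction itself: admissibility of $\underline r$ combined with the constraints $q^{-2w}\le r_{2k-1}$, $L\le q^{-4w}$, and the requirement that the trapezoidal extension cover $Q_{i,j}$ restricts the parameter choices, and one must verify (in a case analysis on the geometry of $\widehat\cS$) that such a snail exists with every slice either fitting inside a single adjacent square or at least passing through the interior of one, so that the single-square good and super good events translate into $\SG(V)$.
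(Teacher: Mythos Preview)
Your overall strategy is the same as the paper's: build a snail $V\supset Q_{i,j}$ whose base sits in the $\lambda$ consecutive super good thin squares, verify that $\cH_{i,j}\cap\bigcap_{i',j'}\cG(Q_{i',j'})\subset\SG(V)$, and apply Theorem~\ref{thm:key step}. Where your argument is incomplete is the passage from $\var_{Q_{i,j}}(f\mid\cG(Q_{i,j}))$ to $\var_V(f\mid\SG(V))$.

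You write that Lemma~\ref{lem:var:conv} together with the product structure of $\mu_V(\cdot\mid\SG(V))$ ``inflates'' the former into the latter, possibly via a two-block step. But the product structure in Remark~\ref{rem:prod} is along the partition $B,T_0^\pm,T_1^\pm,\dots$, and the restriction of $\SG(V)$ to $Q_{i,j}$ (a collection of trapezoid-slice events $\ST^\pm_{l,m}$) is \emph{not} the event $\cG(Q_{i,j})$ you are conditioning on. So $\mu_V(\cdot\mid\SG(V))$ does not marginalise to $\mu_{Q_{i,j}}(\cdot\mid\cG(Q_{i,j}))$, and neither Lemma~\ref{lem:var:conv} nor a two-block argument applies directly. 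The paper bypasses this mismatch with the variational characterisation of variance: write $\var_{Q_{i,j}}(f\mid\cG(Q_{i,j}))=\inf_a\mu_{Q_{i,j}}((f-a)^2\mid\cG(Q_{i,j}))\le (1+o(1))\inf_a\mu_{Q_{i,j}}(\1_{\cG(Q_{i,j})}(f-a)^2)$, absorb the outer indicators $\1_{\cH^+_{i,j}}\1_{\bigcap\cG(Q_{i',j'})}$ inside using $Q_{i,j}\cap B=\varnothing$, use the inclusion $\1_{\SG(B)}\1_{\bigcap\cG}\le\1_{\SG(V)}$, and only then choose $a=\mu_V(f\mid\SG(V))$. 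This picks up exactly the factor $\mu(\SG(V))^{-1}$ and lands on $\mu_\Lambda(\var_V(f\mid\SG(V)))$ cleanly, with no auxiliary block dynamics needed.

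Two minor geometric points. In the paper's construction the (single) annulus $A$ sits in the \emph{far} square $Q^{(1)}_{i,j+\lambda}$ and the half-annulus $HA+Lu_0$ in $Q^{(1)}_{i,j+1}$; there is not a separate annulus for $V^+$ and $V^-$. Consequently $Q_{i,j}$ is covered by the $T_l^-$ trapezoids wrapping around the near end of the base, not by $V^+$ as you state. These are labeling issues only; once the snail is placed correctly the rest of your argument (in particular the verification of $\SG(V)$ from the single-square events) goes through as in the paper.
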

If we assume the lemma, we immediately recover \eqref{eq:9}, concluding the proof of Theorem \ref{thm:periodic}.
\end{proof}

\begin{proof}[Proof of Lemma \ref{lem:final}]
We assume that $\1_{\cH_{i,j}}=1$ and that w.l.o.g.\ the event $\cH_{i,j}^+=\bigcap_{j'=j+1}^{j+\l} \SG\left(Q^{(1)}_{i,j'}\right)$
occurs. Next, we recall Definition \ref{def:snail} of the snail
$V^R_{L}(\ur)$ and we choose $r_l=\rho_{k+l}\lfloor\d
r_{l-1}/\rho_{k+l}\rfloor$ for all $l\in[2k]$, setting $r_{-1}=L=\l (W+2R_0)-2R_0$. We choose $\l$
sufficiently large, depending on $\d$ and $\cU$ but not on $w$  and $q$, in such a way that $Q_{i,j}\subset x+V^R_L(\ur)$, where $x$ is the center of the
rightmost annulus in $Q^{(1)}_{i,j+\l}$. We write $V=x+V^R_L(\ur)$ and observe that, by
construction, 
$Q_{i,j}\cap B=\varnothing$, where $B$ is the base $V$. Finally, we recall Definition \ref{def:good events} of the events
$\SG(B),\cG(T_l^\pm)$ and $\SG(V)=\SG(B)\cap \bigcap_{l\in[2k]}(\cG(T_l^+)\cap \cG(T_l^-))$ for the snail $V$. It is easy to verify the following implications (see Figure \ref{fig:strips}): 
\begin{align}
\label{eq:1A+}\cH^+_{i,j}&{}\subset \cS\cG(B)& \bigcap_{i',j'}\cG(Q_{i',j'})&{}\subset \bigcap_{l\in[2k]}(\cG(T_l^+)\cap \cG(T_l^-)).
\end{align}
Indeed, for the first inclusion, recalling \ref{event:i} it is clear that $\cA$ and $\HA$ occur (since the leftmost annulus in $Q_{i,j+1}$ contains $HA$ and the rightmost one in $Q_{i,j+\l}$ contains $A$) and that all $\SB_{m,p}$ occur (for $SB_{m,p}$ contained in two consecutive squares $Q_{i,j'},Q_{i,j'+1}$ at least in one of them we are guaranteed to have the helping sets; for $SB_{0,p}$ close to the left boundary of $Q_{i,j'}$ the infected rightmost annulus provides the desired helping sets). To see the second one, observe that for all $l,m$, $ST^\pm_{l,m}$ intersects at least one of the squares $Q_{i',j'}$ in a segment of length at least $\varepsilon W$.

Using \eqref{eq:1A+} and $\mu_\L(\cE)=1-o(1)$, we have that
\begin{multline*}
\mu_{\L}\left(\1_{\cH^+_{i,j}}\var_{Q_{i,j}}(f\tc \cG(Q_{i,j}))\tc \bigcap_{i',j'}\cG(Q_{i',j'})\right)\\
\begin{aligned}\le{}& (1+o(1)) \mu_\L\left({\1}_{\SG(B)}\1_{\bigcap_{i',j'}\cG(Q_{i',j'})}\inf_a\mu_{Q_{i,j}}\left(\1_{\cG(Q_{i,j})}(f-a)^2\right)\right)\\
\le{}&(1+o(1)) \mu_\L\left(\inf_a\mu_{Q_{i,j}}\left({\1}_{\SG(V)}(f-a)^2\right)\right)\\
\le{}&\mu_\L\left({\1}_{\SG(V)}\left(f-\mu_V(f\tc\SG(V) )\right)^2\right)/\mu(\SG(V))\\
={}&\mu_\L\left(\var_{V}(f\tc \SG(V))\right).
\end{aligned}
\end{multline*}

If we now apply the bound \eqref{eq:4} of Theorem \ref{thm:key step} and use the fact that $V$ is contained in a deterministic $O(\l W)$-neighborhood of the square $Q_{i,j}$ we get the conclusion of the lemma, once we observe that $c^V_x\le c_x$, where $c_x$ are the constraints on the torus $\L$.
\end{proof}

\section{Open problems}
\label{sec:open pb}
With Theorem~\ref{th:main} establishing universality, the next natural goal is to determine the relaxation time up to a constant factor. This would correspond to reaching the refined universality partition in bootstrap percolation proved in \cite{Bollobas14}. However, for KCM, we expect that the partition, even restricted to the finite stable set case studied in this work, will be more subtle. In order to state it we need one more definition adapted from \cite{Morris17}*{Definition 2.3}.

\begin{defn}
A critical update family of difficulty $\alpha$ is called 
\emph{rooted} if there exist two non-opposite directions of difficulty strictly larger than $\alpha$ and \emph{unrooted} otherwise.
\end{defn}
The importance of this distinction, though not the one initially suggested in \cites{Morris17,Martinelli19a}, is the following. In a rooted model, a droplet may not reproduce in certain directions, which forces it to perform an East-like motion, as explained in Section \ref{sec:sketch}. As it was established in the present work this obstruction can be circumvented using the mechanism explained in Section \ref{sec:sketch} on length scale $q^{-O(1)}$. However, e.g.\ for the rooted model in Figure \ref{fig:example}, typically the first pairs of infections are found at a distance $q^{-2}$ from the droplet and, before reaching them, the droplet can only move right East-like. Therefore, for rooted models with finitely many stable directions we expect that the bottleneck of the dynamics consists in creating $\log (1/q)$ disjoint droplets close to each other. For unrooted models one may hope to directly move droplets in an FA1f-like way (creating a droplet and immediately erasing the previous one), as in Section \ref{sec:Proof}, since they may locally move in all directions.

On the other hand, as identified in \cite{Bollobas14}, balanced models have droplets whose probability of occurrence is $\exp(-\Theta(1)/q^{\alpha})$, while unbalanced ones only have droplets with probability $\exp\left(-\Theta(\log^2(1/q))/q^\a\right)$. Putting these two intuitions together and the mechanism put forward in the present work, this leads us to the following conjecture.
\begin{conj}
\label{conj:logs}
Let $\cU$ be a critical update family with finite number of stable directions and difficulty $\a$. Then
\[\bbE_\mu[\tau_0]=\exp\left(\frac{\log^{\gamma}(1/q)(\log\log(1/q))^{O(1)}}{q^\a}\right),\]
where
\begin{itemize}
\item $\gamma=0$ if there exists at most one direction $u\in S^1$ such that $\alpha(u)>\alpha$ (balanced unrooted models),
\item $\gamma=1$ if there exist at least two directions $u\in S^1$, such that $\a(u)>\a$, but not two opposite ones (balanced rooted models),
\item $\gamma=2$ if there exists $u$ such that $\min(\a(u),\a(u+\pi))>\alpha$, but $\a(v)\le\a$ for $v\neq u,u+\pi$ (unbalanced unrooted models),
\item $\gamma=3$ otherwise, i.e.\ there exist three different directions with difficulty larger than $\a$, two of which are opposite (unbalanced rooted models).
\end{itemize}
Furthermore, we conjecture that the $(\log\log(1/q))^{O(1)}$ correction is in fact $\Theta(1)$, except, possibly, for models with exactly one direction $u\in S^1$ with $\alpha(u)>\alpha$.
\end{conj}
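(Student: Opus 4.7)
The plan is to establish the four regimes of Conjecture~\ref{conj:logs} by separately addressing the balanced/unbalanced dichotomy and the rooted/unrooted dichotomy, each of which should contribute one power of $\log(1/q)$ to the exponent. For the upper bounds I would refine the snail construction of Section~\ref{sec:core}; for the matching lower bounds I would adapt bottleneck arguments from the literature on East-like dynamics and bootstrap percolation.

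On the upper bound side, the two logarithmic factors in Theorem~\ref{th:main} arise from distinct, essentially orthogonal sources. One factor of $\log(1/q)$ comes from the width $R=w^2\log(1/q)/q^\alpha$ of the critical droplet annulus, chosen so that each slice of the base contains a helping set with high probability. For balanced families every direction of difficulty exactly $\alpha$ admits an $\alpha$-helping set occurring with probability $q^\alpha$, so one expects to work instead with an annulus of radius $O(1/q^\alpha)$ and obtain $\mu(\SG(B))=\exp(-\Theta(1/q^\alpha))$ in place of the current $\exp(-\Theta(\log^2(1/q)/q^\alpha))$. The second factor of $\log(1/q)$ comes from the bisection of Corollary~\ref{cor:trapezoid}, whose $\sigma^{O(\log L)}$ cost encodes the East-like motion of droplets. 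For unrooted families the droplet can locally reproduce in every non-$u_0$ direction, so one should be able to replace iterated bisection by a single-step FA1f move along the lines of Section~\ref{sec:Proof}, paying only a constant multiplicative factor. Combining these two improvements yields all four regimes of the conjecture; the most expensive case $\gamma=3$ coincides with Theorem~\ref{th:main}.

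The main obstacle is the lower bounds. For $\gamma=3$ one should be able to combine an East-chain bottleneck argument with a quantitative count of the infections required to assemble a critical droplet, in the spirit of the sharp bootstrap percolation lower bounds of~\cite{Bollobas14}. For the intermediate cases the argument is considerably subtler: one must rule out hybrid mechanisms, e.g.\ an efficient balanced-droplet FA1f-like motion inside a rooted model, which would require a Dirichlet-form lower bound genuinely sensitive to the combinatorial obstruction imposed by the rooted directions. Finally, sharpening the $(\log\log(1/q))^{O(1)}$ correction in Conjecture~\ref{conj:logs} to $\Theta(1)$ seems to demand a substantially finer control on the typical geometry of infections inside the snail than is afforded by the rather crude Poincar\'e inequalities developed in the present work.
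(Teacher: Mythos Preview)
The statement is a \emph{conjecture}, not a theorem, and the paper contains no proof of it. Section~\ref{sec:open pb} merely motivates the four-way partition through the heuristics you summarise (balanced versus unbalanced governing droplet cost, rooted versus unrooted governing East-like versus FA1f-like motion) and then notes that the conjecture was subsequently established in the companion papers~\cite{Hartarsky20I,Hartarsky20II}. There is therefore nothing in the present paper to compare your attempt against.

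That said, your outline is consonant with the paper's own intuition: the two $\log(1/q)$ factors in Theorem~\ref{th:main} do arise from the droplet radius $R$ and from the bisection in Corollary~\ref{cor:trapezoid}, and you correctly identify which of these should disappear in the balanced and unrooted cases respectively. You are also right that the lower bounds are the hard part and that the intermediate cases $\gamma\in\{1,2\}$ require ruling out hybrid mechanisms. But what you have written is a research programme, not a proof; in particular, replacing bisection by a single FA1f step in the unrooted case is far from automatic (the snail has a preferred direction built into its very geometry), and your discussion of the lower bounds does not go beyond naming the relevant literature. If the intent was to prove the conjecture, substantial new ideas beyond those in the present paper are required, as the existence of the separate works~\cite{Hartarsky20I,Hartarsky20II} attests.
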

It should be noted that such sharp results are not known for any critical model, so Conjecture \ref{conj:logs} provides the highest precision currently feasible. In fact, the level of precision of the conjecture is not attained for any model, including the most classical FA2f model falling in the first class.\footnote{Since the submission of the present work, a much sharper result was proved for FA2f by the authors \cite{Hartarsky20FA}.}

Since the present work was submitted, in two companion papers {Mar\^ech\'e} and the first author \cite{Hartarsky20I} and the first author \cite{Hartarsky20II}, proved Conjecture \ref{conj:logs}. Moreover, they established that the peculiar $\log\log$ correction left uncertain in the conjecture is indeed present. We refer the reader to those works for more detailed discussions of the bottlenecks and mechanisms involved, particularly for the anomalous case with exactly one direction of difficulty larger than $\alpha$ dubbed \emph{semi-directed} there.

\begin{appendix}
\section*{}
\label{app}
\begin{proof}[Proof of Lemma \ref{lem:FAperiodic}]
We will consider the linear case---the periodic one is treated identically. For simplicity we assume that $2k$ divides $n$. Partition $[n]$ into blocks $I_0,\dots, I_{N-1}$ where 
$I_i:=\{i\k,\dots,(i+1)\k-1\}$ and $N=n/\k$. Let $\cH^{(\ell)}$ be the event that there exists $i$ in the \emph{left half} $[N]^{(\ell)}:=[N/2]$ of $[N]$ such that $\o_j\in \cH$ for all $j\in I_i$. Let $\cH^{(r)}$ be defined similarly but for the blocks with index in the right half $[N]^{(r)}:=[N]\setminus[N]^{(\ell)}$. Using the assumption of the lemma $\nu(\cH^{(\ell)})=\nu(\cH^{(r)})>15/16$ and
\cite{Blondel13}*{Lemma 6.5}, we get
\begin{equation*}
\var_\nu(f\tc \O_\cH)\le 24\nu\left(\1_{\cH^{(r)}}\var^{(\ell)}(f)+\1_{\cH^{(\ell)}}\var^{(r)}(f) \tc \O_\cH\right),
\end{equation*}
where $\var^{(\ell)}$ denotes the variance computed w.r.t.\ the variables corresponding to the blocks in the left half and similarly for $\var^{(r)}$.

Given $\cH^{(\ell)}$, let $\xi$ be the smallest label in $[N]^{(\ell)}$ such that $\o_j\in \cH$ for all $j\in I_{\xi}$. Using Lemma~\ref{lem:var:conv} and the fact that the event $\{\xi=i\}$ is independent of the variables $(\o_j)_{j\ge (i+1)}\k$, we get that
\begin{equation}
\begin{aligned}
\label{eq:A1}
  \nu\left(\1_{\cH^{(\ell)}}\var^{(r)}(f) \tc \O_\cH\right)&{}\le \sum_{i\in [N]^{(\ell)}}\nu\left(\1_{\{\xi=i\}}\var_{\ge (i+1)\k}(f)\tc \O_\cH\right)\\
  &{}\le\frac{1}{\widehat\nu(\cH)^{\k}}\sum_{i\in [N]^{(\ell)}}\nu\left(\1_{\{\xi=i\}}\var_{\ge (i+1)\k}(f)\right),
\end{aligned}
\end{equation}
where $\var_{\ge (i+1)\k}(f)$ is the variance w.r.t.\ the variables $(\o_j)_{j\ge (i+1)\k}$. The r.h.s.\ above can now be bounded above using \cite{Martinelli19a}*{Proposition 3.4}. If $\cH_{I_j}$ is the event that $\o_l\in \cH$ for all $l \in I_{j}$, with the convention that $\cH_{I_N}$ and $\cH_{I_{-1}}$ do not occur, 
we get that 
\begin{equation*}
  \1_{\{\xi=i\}}\var_{j\ge (i+1)\k}(f)\le \frac{1}{\widehat\nu(\cH)^{O(\k)}}\sum_{j=i+1}^{N-1}\nu_{\ge (i+1)}\k\left(\1_{\{\xi=i\}}\1_{\cH_{j}^\pm}\var_{I_j}(f)\right), 
\end{equation*}
where $\cH_{j}^\pm=\cH_{I_{j-1}}\cup\cH_{I_{j+1}}$ and $\var_{I_j}$ is the variance w.r.t.\ the variables in $I_j$. By inserting the r.h.s.\ above into the r.h.s.\ of \eqref{eq:A1}, we obtain that $\nu\left(\1_{\cH^{(\ell)}}\var^{(r)}(f) \tc \O_\cH\right)$ is smaller than
\begin{multline*}
\frac{1}{\widehat\nu(\cH)^{O(\k)}}\nu\left(\sum_{j=1}^{N-1}\sum_{i=0}^{j-2}\1_{\{\xi=i\}}\1_{\cH_j^\pm}\var_{I_j}(f)+\sum_{i\in[N]^{\ell}}\1_{\{\xi=i\}}\1_{\cH^\pm_{i+1}}\var_{I_{i+1}}(f)\right)\\
\le \frac{2}{\widehat\nu(\cH)^{O(\k)}}\sum_{j=1}^{N-1}\nu\left(\1_{\cH_j^\pm} \var_{I_j}(f)\right),
\end{multline*}
where we have isolated the term $j=i+1$ and used $\sum_i\1_{\{\xi=i\}}\le 1$ and $\1_{\{\xi=i\}}\le 1$ for the two terms respectively. Exactly the same argument can be applied to the term $\nu\left(\1_{\cH^{(r)}}\var^{(\ell)}(f)\tc\O_\cH\right)$ to conclude that
\begin{equation}
\label{eq:A:varnu}
\var_\nu(f\tc\O_\cH)\le  \frac{96}{\widehat\nu(\cH)^{O(\k)}}\sum_{j=0}^{N-1}\nu\left(\left(\1_{\cH_{I_{j+1}}}+\1_{\cH_{I_{j-1}}}\right)
\var_{I_j}(f)\right).
\end{equation}
We finally bound from above a generic term, 
considering $\nu\left(\1_{\cH_{I_1}}
\var_{I_0}(f)\right)$ for concreteness.

We apply Lemma
\ref{lem:2block} with $X_1=\o_{\k-1}, X_2=(\o_0,\o_1,\dots,\o_{\k-2})$ and facilitating event $\{\o_{\k-1}\in \cH\}$ to $\var_{I_0}(f)$ in order to get 
\begin{equation}
  \label{eq:A4}
 \var_{I_0}(f)\le \frac{2}{\widehat\nu(\cH)}\nu_{I_0}\left(\var_{\k-1}(f)
 +\1_{\{\o_{\k-1}\in \cH\}}\var_{I_0\setminus \{\k-1\}}(f)\right).
\end{equation}
Thus, we obtain

\begin{align*}
\nu\left(\1_{\cH_{I_1}}
\var_{I_0}(f)\right)\le{}&
\frac{2}{\widehat\nu(\cH)}\left(\nu\left(\1_{\cH_{I_1}} \var_{\k-1}(f)\right)
+\nu\left(\1_{\cH_{I_1}} \1_{\{\o_{\k-1}\in \cH\}}\var_{I_0\setminus\{\k-1\}}(f)\right)\right)\\ \le{}&\frac{2}{\widehat\nu(\cH)}\left(\nu\left(\1_{\cH_{\k-1}}\var_{\k-1}(f)\right)
+\nu\left(\1_{\cH_{\k-2}}\var_{I_0\setminus\{\k-1\}}(f)\right)\right)
\end{align*}
We can repeat the step leading to \eqref{eq:A4} with $X_1=\o_{\k-2}, X_2=(\o_0,\dots,\o_{\k-3})$ and facilitating event $\{\o_{\k-2}\in \cH\}$ and so on.
At the end of the iteration we finally get that
\[
\nu\left(\1_{\cH_{I_1}}\var_{I_0}(f) \right)\le \left(\frac{2}{\widehat
  \nu(\cH)}\right)^{\k}\sum_{i\in I_0}\nu\left(\1_{\cH_i}\var_{i}(f)\right).
\]
Putting all together, we have finally proved that
\begin{align}
\var_\nu(f)&\le \frac{96}{\widehat \nu(\cH)^{O(\k)}}\sum_{j\in[N]}\nu\left(\left(\1_{\cH_{I_{j+1}}}+\1_{\cH_{I_{j-1}}}\right)
\var_{I_j}(f)\right)\nonumber \\
&\le  \left(\frac{2}{\widehat\nu(\cH)}\right)^{O(\k)}\sum_{j}\sum_{i\in I_j}\nu\left(\1_{\cH_i}\var_{i}(f)\right).\tag*{\qedhere}
\end{align}
\end{proof}
\end{appendix}

\section*{Acknowledgments}
We wish to thank Laure Mar{\^e}ch\'e for many enlightening discussions concerning universality for $\cU$-KCM.

\bibliographystyle{imsart-number}
\bibliography{Bib}
\end{document}